\theoremstyle{plain}
\newtheorem{thm}{Theorem}[section]
\newtheorem{lem}[thm]{Lemma}
\theoremstyle{definition}
\newtheorem{dfn}[thm]{Definition}
\DeclareMathOperator{\fit}{Fit}
\DeclareMathOperator{\aut}{Aut}
\DeclareMathOperator{\pow}{Pow}
\DeclareMathOperator{\soc}{Soc}
\DeclareMathOperator{\psl}{PSL}
\DeclareMathOperator{\pgl}{PGL}
\DeclareMathOperator{\gl}{GL}
\DeclareMathOperator{\sz}{Sz}
\DeclareMathOperator{\spl}{SL}
\definecolor{lightblue}{rgb}{0.5,0.5,1.0}
\definecolor{darkred}{rgb}{0.5,0,0}
\definecolor{darkgreen}{rgb}{0,0.5,0}
\definecolor{darkblue}{rgb}{0,0,0.5}
\begin{document}
\title{Classification of non-solvable groups whose power graph is a cograph
\footnote{
The research leading to these results has received funding
from the European Research Council (ERC) under the European
Union’s Horizon 2020 research and innovation programme
(EngageS: grant agreement No. 820148) and from the German Research Foundation DFG (SFB-TRR 195 ``Symbolic Tools in Mathematics and their Application''). We thank Pascal Schweitzer for the helpful comments on an earlier draft of this paper.
}
}

\author{Jendrik Brachter}
\author{Eda Kaja}
\affil{TU Darmstadt\footnote{Email addresses: brachter@mathematik.tu-darmstadt.de, kaja@mathematik.tu-darmstadt.de}}
\date{\today}
\maketitle

\begin{abstract}
Cameron, Manna and Mehatari investigated the question of which finite groups admit a power graph that is a cograph, also called \emph{power-cograph groups} (Journal of Algebra 591 (2022)). They give a classification for nilpotent groups and partial results for general groups. However, the authors point out number theoretic obstacles towards a classification. These arise when the groups are assumed to be isomorphic to $\psl_2(q)$ or $\sz(q)$ and are likely to be hard.

In this paper, we prove that these number theoretic problems are in fact the only obstacles to the classification of non-solvable power-cograph groups. Specifically, for the non-solvable case, we give a classification of power-cograph groups in terms of such groups isomorphic to $\psl_2(q)$ or $\sz(q)$. 
For the solvable case, we are able to precisely describe the structure of solvable power-cograph groups.
We obtain a complete classification of solvable power-cograph groups whose Gruenberg-Kegel graph is connected. Moreover, we reduce the case where the Gruenberg-Kegel graph is disconnected to the classification of $p$-groups admitting fixed-point-free automorphisms of prime power order, which is in general an open problem. 
\end{abstract}
\section{Introduction}

A classic reoccurring approach to investigating the structure of groups is by associating the groups with graphs and then studying their graph theoretical properties. Prominent examples include Cayley graphs, capturing the structure of groups relative to the action of a specified generating set, or orbital graphs, capturing primitivity properties of permutation groups.

A central example in the context of the present work is the \emph{Gruenberg-Kegel graph}, also called the \emph{prime graph} of a group. Rather than describing actions of groups, the prime graph $\pi(G)$ of a group $G$ is built from abstract structural aspects of the group: the vertices are the primes dividing the order of the group and two primes are joined by an edge if and only if their product is realized as the order of some group element. The prime graph was first investigated by Gruenberg and Kegel (see~\cite{gruenberg_roggenkamp_1975}, some results were later published by Williams in~\cite{williams1981} based on an unpublished manuscript by Gruenberg and Kegel) in the context of augmentation ideals of integral group rings. The structure of the prime graph has deep connections with the structure of the group itself. For instance, if $\pi(G)$ is disconnected, then the structure of $G$ is quite restricted (cf. {\cite[Theorem 14]{manna2021forbidden}}) and if $G$ is solvable, then $\pi(G)$ does not contain an induced coclique of size $3$~\cite{Lucido2002}. 

A recent, active branch of research studies graphs whose vertex sets are themselves groups.
A basic but important example in this scope is the \emph{power graph} of a group, which was first studied in the context of building graphs from semigroups \cite{KelarevQuinn}. The vertices are given by an underlying semigroup and arrows are drawn from elements to all their powers. Power graphs have been frequently studied throughout the literature, see for example~\cite{aalipour_akbari_cameron_nikandish_shaveisi_2017, abawajy_kelarev_chowdhury_2013}.

A comprehensive overview of various graphs defined on groups including many key results and open problems can be found in~\cite{cameron2021graphs}. There, Cameron points out that many commonly investigated graphs defined on groups fit into a hierarchy of subgraphs. The hierarchy includes the (directed) power graph and the enhanced power graph, the commuting graph and the deep commuting graph and the generating graph. An extension of this hierarchy into a second dimension has been studied in~\cite{arunkumar2021super}. A particular focus lies on the interactions between different graphs. For instance, it is known that all graphs from the hierarchy uniquely determine the prime graph~\cite[Theorem 2.9]{cameron2021graphs}.

The work of Cameron et al. has spawned a number of intriguing problems relating the structure theory of graphs and groups as well as problems of classifying groups in terms of restrictions on their graphs, often resulting in new characterizations of interesting known group classes. There has been ongoing research to determine the groups for which two of the graphs in the hierarchy coincide, and various conditions and results have been proven and summarized in~\cite{aalipour_akbari_cameron_nikandish_shaveisi_2017, cameron2021graphs}.  While investigating groups that are unrecognizable by their prime graph, Cameron and Maslova recently gave a classification of non-solvable \emph{EPPO-groups}~\cite{cameron2021criterion}. EPPO-groups are groups in which every element has prime power order. The study of such groups was initiated by Higman in~\cite{higmanEPPO}, and later extended and generalized by numerous authors, see for example~\cite{buturlakin_2017,shumyatsky_2020}, including the famous work of Suzuki~\cite{suzukiCentralizers,suzukiDoublyTransitive} on doubly transitive groups.

In the present work we are concerned with generalizing the classification of EPPO-groups by extending it to a classification of groups whose power graph is a cograph (in the following called \emph{power-cograph groups}). The class of power-cograph groups indeed contains all EPPO-groups by an argument of Cameron \cite{cameron2021graphs}.
A \emph{cograph} is a graph that does not contain a path on four vertices as an induced subgraph. Cographs are frequently studied in structural graph theory and have been rediscovered numerous times, see for example~\cite{jung_1978, seinsche_1974, sumner_1974, corneil_1981}. An important theme in structural graph theory is the study of hereditary graph classes which can be defined via forbidden induced minors~\cite{graph_classes}. After cluster graphs (defined by forbidding induced paths on three vertices), cographs form a basic and natural class of graphs to consider, as they are defined by a single forbidden induced subgraph.

The problem of classifying power-cograph groups goes back to~\cite{manna2021forbidden}, where the authors investigate general forbidden substructures in power graphs. They also point out that classifying such groups when isomorphic to $\psl_2(q)$ is equivalent to a number theoretic problem that is likely to be hard. Further investigations in \cite{cameron2021finite} show that the classification of simple power-cograph groups completely reduces to similar number theoretic problems for groups isomorphic to $\psl_2(q)$ or $\sz(q)$ (see~\cite[Theorem 1.3]{cameron2021finite} and the discussion after). Nilpotent power-cograph groups are fully classified and partial results are available for the solvable case in \cite{cameron2021finite}.
To this end, the best we can expect is a classification of non-solvable power-cograph groups, relative to the evident number theoretic obstacles, which is what we provide in this paper.
Given a solution to these problems, our results reveal a complete classification of non-solvable power-cograph groups.  In particular, for the non-solvable case we prove the following theorem.

\begin{thm}[Non-solvable power-cograph groups]\label{mainthm2}
Let $G$ be a finite group that is not solvable. The power graph of $G$ is a cograph if and only if one of the following holds:
	\begin{enumerate}
		\item $G$ is a simple power-cograph group isomorphic to one of $\psl_2(q)$, $\sz(q)$ or $\psl_3(4)$. The admissible values of $q$ are precisely characterized in \cite{cameron2021finite}, see Theorem~\ref{thm:finite_simple_po-cographs} below.
		\item $G$ is one of $\pgl(2,5)$, $\pgl(2,7)$, $\pgl(2,9)$ or $M_{10}$.
		\item $T:=G/\soc(G) \cong\psl_2(2^n)$ with $n\geq 2$ and $T$ is a power-cograph group. Furthermore, $\soc(G)=\mathcal{O}_2(G)$ and each minimal normal subgroup of $G$ is isomorphic to the natural module over the group ring $\mathbb{F}_{2^n}[\textsc{SL}_2(2^n)]$ as a $T$-module.
		\item $T:=G/\soc(G) \cong\sz(2^{2e+1})$ with $e\geq 2$ and $T$ is a power-cograph group.  Furthermore, $\soc(G)=\mathcal{O}_2(G)$ and each minimal normal subgroup of $G$ is isomorphic to the natural module over the group ring $\mathbb{F}_{2^{2e+1}}[\sz(2^{2e+1})]$ as a $T$-module.
	\end{enumerate}
\end{thm}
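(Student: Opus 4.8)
The plan is to combine a local criterion for the power-cograph property with CFSG-based structure theory, organized around the generalized Fitting subgroup $F^*(G)=\fit(G)E(G)$. First I would record the two arithmetic consequences of heredity that drive everything. Since induced subgraphs of cographs are cographs and the power graph of a subgroup is induced, every subgroup of a power-cograph group is again one; restricting to cyclic subgroups forces every element order to be a prime power or a product $pq$ of two distinct primes. In addition there is a \emph{cross-constraint}: there is no prime $p$ that is ``extended in two ways'', i.e.\ no element of order $p$ that is simultaneously a power of an element of order $pq$ and of an element of order $pr$ with $q\neq r$, since one then reads off an induced path on the four orders $q,\,pq,\,p,\,pr$. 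I would state both as a lemma (or cite the corresponding statement from the earlier sections).

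Next I would analyze $F^*(G)$ and split according to whether the solvable radical $R$ of $G$ is trivial. The cross-constraint forbids two commuting subgroups $A,B$ where $A$ contains an element of prime order $p$ and $B$ contains elements of two distinct prime orders different from $p$. Applied to components (using that a non-abelian simple group has at least three prime divisors) this shows $E(G)$ has at most one component; applied to a component together with a centralizing prime-order element of $R$, it forces $R=1$ whenever $E(G)\neq1$. Thus either $R=1$, so $F^*(G)=E(G)=T$ is a single simple power-cograph group and $T\le G\le\aut(T)$ is almost simple, or $E(G)=1$ and $F^*(G)=\fit(G)=R$. In the almost simple branch, Theorem~\ref{thm:finite_simple_po-cographs} gives $T\in\{\psl_2(q),\sz(q),\psl_3(4)\}$, and it remains to decide which proper extensions inside $\aut(T)$ survive: adjoining diagonal or field automorphisms creates elements (e.g.\ of order $q\pm1$ in $\pgl_2(q)$, or field-automorphism elements) whose order violates the arithmetic constraint for all but finitely many configurations, leaving exactly $G=T$ (case~1) and the short list $\pgl(2,5),\pgl(2,7),\pgl(2,9),M_{10}$ (case~2), which I would confirm by direct computation of element orders in these small groups.

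In the branch $F^*(G)=\fit(G)=R$ with $C_G(R)\le R$, the quotient $G/R\hookrightarrow\aut(R)$ is non-solvable and has a non-abelian simple section. I would first show $R$ is a $p$-group for a single prime (a third commuting prime inside the nilpotent group $R$ would already give an element of order divisible by three primes, and the remaining prime is eliminated using the cross-constraint together with the action of the simple section), and then that $T:=G/\soc(G)$ acts faithfully on $V:=\soc(G)=\mathcal O_p(G)$. The decisive representation-theoretic step is that the power-cograph condition forces every element of $T$ whose order is not a prime power to act fixed-point-freely on $V$: if such an element $s$ fixed a nonzero $v$, then $(v,s)$ would have order divisible by three distinct primes. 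I expect this fixed-point-free requirement, together with faithfulness, to pin down $V$ as the natural module: the two-dimensional natural module is faithful for the \emph{simple} group $T$ only in characteristic $2$, where $\spl_2(2^n)=\psl_2(2^n)$ and both the split and non-split tori act without nonzero fixed points, and analogously for the four-dimensional natural module of $\sz(2^{2e+1})$. This simultaneously forces $p=2$, selects the families $\psl_2(2^n)$ and $\sz(2^{2e+1})$ (ruling out odd characteristic and $\psl_3(4)$, which admit no faithful module with the required behaviour), and yields that $\mathcal O_2(G)$ is elementary abelian and completely reducible, so $\soc(G)=\mathcal O_2(G)$ is a direct sum of natural modules.

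The main obstacle I anticipate is precisely this last characterization: proving that the ``fixed-point-free on every bad-order element'' condition is equivalent to $V$ being a direct sum of natural modules, and conversely that such an extension is always a power-cograph group. For the converse I would check that in $V\rtimes T$ no new forbidden orders or cross-paths appear; the fixed-point-free action confines the odd-order part of the spectrum to that of $T$ (so that $G$ is power-cograph iff $T$ is), while the $2$-part only enlarges to prime-power orders arising from $V\rtimes U$ for a Sylow $2$-subgroup $U$ of $T$, all admissible. Excluding non-split or higher-exponent structures on $\mathcal O_2(G)$, and ruling out non-natural modules with accidental fixed-point-free behaviour, is where the detailed module theory of $\psl_2(2^n)$ and $\sz(q)$ in defining characteristic will be needed.
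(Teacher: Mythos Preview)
Your overall architecture---split via $F^*(G)$ into an almost-simple branch and a ``$F^*(G)=\fit(G)$'' branch---is essentially the paper's socle dichotomy (Lemma~\ref{lem: almost_simple_or_abelian_socle}) in slightly different language, and your treatment of the almost-simple branch matches Lemmas~\ref{lemma: almost_simple_to_simple_or_pgl} and~\ref{lem: psl-G-pgl_then_G_psl_or579}. The substantive divergence, and the place where your plan has a real gap, is the abelian-socle branch.

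The condition you extract---``every element of $T$ whose order is \emph{not} a prime power acts fixed-point-freely on $V$''---is strictly weaker than what is actually needed and used. It says nothing about elements of $T$ of odd \emph{prime} order; such an element could a priori fix a nonzero vector of $V$ and only produce an element of order $2p$ in $G$, which your cross-constraint alone does not forbid. The paper closes this gap by first proving, via a direct element-chasing argument inside $S=\soc(G)$ and a Sylow $2$-subgroup (Lemma~\ref{lem:2_isolated}), that $2$ is \emph{isolated} in $\pi(G)$. This simultaneously (i) forces $S$ to be a $2$-group before any module theory is invoked, avoiding the circularity in your ``$p=2$ because the natural module is only faithful in characteristic $2$'' step, and (ii) upgrades the fixed-point-free condition to: \emph{every} element of odd order in $T$ acts without nonzero fixed points on $V$. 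Only with this stronger hypothesis can one feed the situation into the Gruenberg--Kegel structure theorem (Lemma~\ref{lem:disconnected_pi}) and then into the classification results of Stewart and Martineau, which the paper cites rather than reproves: \cite[Proposition~3.2]{stewart_1973} eliminates $\psl_2(q)$ for odd $q>5$, and \cite[Proposition~4.1]{stewart_1973} together with \cite{martineau_1972} pin $\mathcal O_2(G)$ down as a direct sum of natural modules. Your proposal flags exactly this module characterization as the main obstacle, but you should be aware that it is not done from scratch; the heavy lifting is in those two references, and they require the full odd-order fixed-point-free hypothesis, not just the non-prime-power one.

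A smaller point: in passing from ``$R$ nilpotent'' to ``$R$ is a $p$-group'', your sketch (``the remaining prime is eliminated using the cross-constraint together with the action of the simple section'') is too vague. The paper's route is again via Lemma~\ref{lem: almost_simple_or_abelian_socle}: if $\soc(G)\cong C_p\times C_q$ then $G/C_G(\soc(G))$ is abelian, forcing a non-abelian composition factor into a centralizer, which the centralizer lemmas forbid.
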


Moreover, our techniques also yield new insights for the solvable case.  We 
obtain a complete classification of solvable power-cograph groups with connected prime graph. In the disconnected case one problem remains, and it already arises in the case of solvable EPPO-groups. In general, much is known about the structure of such groups since they are always Frobenius or 2-Frobenius groups. However, to date there is no complete classification of solvable Frobenius groups available. In particular, the question of which $p$-groups admit fixed-point-free automorphisms is still open. However, this is the only remaining obstacle and we precisely describe the structure of solvable power-cograph groups in terms of the following theorem.

\begin{thm}[Solvable power-cograph groups]\label{mainthm1}
Let $G$ be a finite solvable group. The power graph of $G$ is a cograph if and only if one of the following holds:
	\begin{enumerate}
		\item $G$ has prime power order.
		\item $G= (C_p\rtimes C_{q^n})\times C_q$, and if $n>0$ then $C_{q^n}$ acts fixed-point-freely on the $C_p$-part of $\fit(G)$.
		\item $G = C_{pq}\rtimes C_{r^n}$, and if $n>0$ then $C_{r^n}$ acts fixed-point-freely on $C_{pq}$. Thus $r^n$ divides $q-1$ and $p-1$.
		\item $G = C_{rs}\rtimes C_{pq}$, where $pq$ divides $r-1$ and $s-1$ and $C_{pq}$ acts fixed-point-freely on $C_{rs}$.
		\item $F:=\fit(G)$ is an $r$-group for some prime $r$ and $H:=G/F$ is in one of three cases:
		\begin{enumerate}
			\item $H$ is a generalized quaternion $2$-group acting without fixed points on $F$.
			\item $F$ admits a group of fixed-point-free automorphisms $P\cong C_{p^k}$ for a prime $p\neq r$ and $H$ is a subgroup of $P\rtimes C_{r^m}$. The
			action of $C_{r^m}$ on $P$ is fixed-point-free if $m>0$.
			\item $F$ admits a group of fixed-point-free automorphisms $C\cong C_{pq}$ for
			primes $p\neq q$ with $p,q\neq r$. Here, $H$ is a subgroup of $C\rtimes C_{r^m}$. The action of $C_{r^m}$ on $C$ is fixed-point-free if $m>0$.
		\end{enumerate}			 
	\end{enumerate}
\end{thm}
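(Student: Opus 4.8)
The plan is to prove both implications, resting everything on two preliminary facts. First, as already recorded in \cite{cameron2021finite,manna2021forbidden}, if the power graph of $G$ is a cograph then every element order is either a prime power $p^k$ or a product $pq$ of two distinct primes: the subgroups $C_{pqr}$ and $C_{p^2q}$ each contain an induced $P_4$ (take the elements of orders $p,\,pq,\,q,\,qr$ in $C_{pqr}$, and of orders $q,\,pq,\,p,\,p^2$ in $C_{p^2q}$), and these survive as induced subgraphs of the power graph of $G$. Call such orders \emph{admissible}. Second, I would record that the power graph of any group of prime power order is a cograph: in a $p$-group the cyclic subgroups contained in a fixed cyclic subgroup form a chain, so any two cyclic subgroups with a common cyclic overgroup are comparable, and a short case analysis on an alleged induced path $a\!-\!b\!-\!c\!-\!d$ then forces one of the prescribed non-edges to become an edge. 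This yields case~(1) at once and shows that, as long as only one prime is involved, no condition beyond admissibility is imposed on a Sylow subgroup sitting inside $\fit(G)$.

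The heart of the forward direction is to pin down the shape of the prime graph $\pi(G)$. The key tool is a \emph{$P_4$-mechanism}: if an element $z$ of order $p$ lies in two incomparable cyclic subgroups $\langle u\rangle$ and $\langle y\rangle$, and $\langle u\rangle$ contains an element $w$ whose order is coprime to both $p$ and $|y|$, then $w\!-\!u\!-\!z\!-\!y$ is an induced $P_4$ (the edges $w\!\sim\!u$, $u\!\sim\!z$, $z\!\sim\!y$ are clear, while $w\!\not\sim\!z$ and $w\!\not\sim\!y$ follow from coprimality of orders and $u\!\not\sim\!y$ from incomparability). Applying this with $\langle u\rangle\cong C_{pq}$ and $\langle y\rangle\cong C_{pr}$ shows that no order-$p$ element can simultaneously witness an edge $pq$ and an edge $pr$ with $q\neq r$; pushing this through the coprime action of Hall and Sylow subgroups in the solvable group $G$ upgrades it to the statement that \emph{$\pi(G)$ has maximum degree at most $1$}, i.e.\ $\pi(G)$ is a disjoint union of isolated vertices and single edges. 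Combining this with Lucido's theorem that a solvable group has no induced coclique of size $3$ in its prime graph \cite{Lucido2002} leaves exactly five possible shapes: a single vertex, a single edge, two isolated vertices, an edge together with an isolated vertex, and two disjoint edges. These match the five families in the statement.

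It remains to convert each shape into exact structure, and simultaneously to check the reverse implication. For every disconnected shape the Gruenberg--Kegel structure theorem for solvable groups \cite{manna2021forbidden} makes $G$ Frobenius or $2$-Frobenius; and for a Frobenius group $G=K\rtimes H$ the power graph decomposes cleanly, since nontrivial kernel and complement elements are never adjacent, so it is the identity vertex joined to the disjoint union of the power graph of $K$ and of the conjugate copies of that of $H$. As disjoint unions and joins with a universal vertex preserve the cograph property, $G$ is a power-cograph group \emph{if and only if} both $K$ and $H$ are; this gives the reverse direction for all Frobenius members and reduces the forward analysis to the two factors. A single-prime kernel $F=\fit(G)$ may be an arbitrary $r$-group (all pass), and admissibility forces the complement to be a power-cograph Frobenius complement, hence generalized quaternion (5a), cyclic $C_{p^k}$ (5b), or $C_{pq}$ (5c), with $r$ reappearing in $H$ giving the $2$-Frobenius instances; the clause that $F$ \emph{admits} the prescribed fixed-point-free automorphism group is exactly where the open problem enters. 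A kernel involving two primes must instead be cyclic of order $pq$, because an internal $C_p\times C_p\times C_q$ already contains an induced $P_4$ (take $a=(v_1,0)$, $b=(v_1,\zeta)$, $c=(0,\zeta)$, $d=(v_2,\zeta)$ with $v_1,v_2$ independent and $\zeta$ generating the $C_q$); this yields the kernels $C_{pq}$ and $C_{rs}$ of cases~(3) and~(4). The connected shapes are handled directly: the single vertex is case~(1), and in the single-edge case the order-$pq$ elements force a direct factor $C_q$ commuting with the $p$-part of $\fit(G)$, while the same $C_p\times C_p\times C_q$ obstruction forces that $p$-part to be $C_p$, yielding the structure $(C_p\rtimes C_{q^n})\times C_q$ of case~(2); this last, non-Frobenius case and the $2$-Frobenius instances in case~(5) need only a short direct verification that no induced $P_4$ spans the two commuting parts.

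The main obstacle I expect is the maximum-degree lemma for $\pi(G)$, specifically the sub-case where the Sylow $q$-subgroup is non-cyclic and the edges $pq$ and $qr$ are witnessed by \emph{different} subgroups of order $q$: there the six natural elements built from one order-$pq$ and one order-$qr$ element split into two mutually non-adjacent triples, so no induced $P_4$ is visible among them, and one must instead use solvability to produce a shared lower-order element across two incomparable cyclic overgroups. The explicit group $C_q^2\rtimes(C_p\times C_r)$ with diagonal action illustrates the resolution: it has admissible orders, yet the diagonal action forces an order-$pr$ element, and a common order-$p$ element of the order-$pq$ and order-$pr$ subgroups then produces an induced $P_4$ through the $P_4$-mechanism. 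A secondary, unavoidable difficulty is isolating the genuinely open input, so that case~(5) is phrased to make the classification complete \emph{given} a description of which $r$-groups admit fixed-point-free automorphisms of the prescribed orders, and so that this is the only part left unresolved.
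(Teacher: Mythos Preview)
Your architecture—constrain $\pi(G)$, then convert each shape into group structure—matches the paper's, but the key step is organised differently and your version carries exactly the gap you flag. The paper does \emph{not} prove ``maximum degree $\leq 1$'' as a stand-alone lemma. Its centralizer analysis (for commuting $x,w$ of prime orders $p>q$ one obtains $C_G(x)\cong C_p\times C_q$ and $C_G(w)\cong C_p\rtimes(\langle w\rangle\times C_{q^n})$) shows only that each component of $\pi(G)$ is a \emph{star centred at its smallest prime}; together with Lucido this makes $\pi(G)$ an induced subgraph of $P_3$ or of $2K_2$. The $P_3$ case (centre $q$, leaves $p,r$) is then eliminated by a Fitting-subgroup argument inside the connected-case analysis: one forces $\fit(G)\cong C_p\times C_q$, and any $r$-element must act fixed-point-freely on the unique order-$q$ subgroup of $\fit(G)$, giving $r\mid q-1$ and contradicting minimality of $q$. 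So the degree bound is a \emph{consequence} of the structural classification, not an input to it.

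Your $P_4$-mechanism covers only the sub-case where a single order-$q$ element lies in both a $C_{pq}$ and a $C_{qr}$. The example $C_q^2\rtimes(C_p\times C_r)$ you offer for the remaining sub-case is not actually in the dangerous region: the direct factor $C_p\times C_r$ already supplies order-$pr$ elements, so its prime graph is a triangle rather than the star $P_3$. The configuration that must be excluded is $\pi(G)$ exactly the star at $q$ with \emph{no} $pr$-edge, and there your mechanism has no shared prime-order element to hinge on; the paper resolves this not by locating a $P_4$ but by the divisibility contradiction above. Without a substitute for that argument, ``pushing through coprime action'' is not yet a proof. A smaller point: in the connected single-edge case your $C_p\times C_p\times C_q$ obstruction bounds the $p$-rank but does not on its own force $\fit(G)\cong C_p\times C_q$; the paper needs a Fitting-series step (ruling out $\fit(G)$ a $q$-group via $\fit_2(G)/\fit(G)$ and Lemma~\ref{lem:semidirect_nontrivial_prime_graph}) to reach that conclusion. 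On the positive side, your Frobenius power-graph decomposition for the converse is a clean simplification over the paper's centralizer characterisation (Lemma~\ref{lem:characterization_via_centralizers}), though it still leaves the $2$-Frobenius instances and case~(2) to be verified by hand.
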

Note that the groups in Case 5a have been fully classified, see~\cite{mayr_1999}: $F$ is necessarily abelian and $F$ admits a fixed-point-free action by a generalized quaternion group of order $2^n$ if and only if $2^n$ divides $p^k-1$ for every cyclic direct factor of $F$ of order $p^k$~\cite[Proposition 6.24]{mayr_1999}. More generally,~\cite{mayr_1999} contains a classification of all groups in Case 5 when $F$ is assumed to be abelian. In the non-abelian case there is an extensive body of research concerned with bounding the Fitting-length of $G$ as a function of the order of a fixed-point-free automorphism of prime order~\cite{higman_1957, ward_1969, thompson_1959}, but no general classification is available.

\paragraph{Techniques} Regarding our techniques, we first exactly characterize power-cograph groups in terms of properties of centralizers of their group elements. This relates back to the classification of \emph{CN-groups} (groups with nilpotent element-centralizers, see \cite{Feit1960FiniteGI}) and \emph{CIT-groups} (groups in which involution centralizers are $2$-groups, see \cite{suzukiCentralizers,suzukiDoublyTransitive}). The latter includes the discovery of the famous Suzuki-groups. In these works, certain restrictions on element centralizers are shown to have strong influence on the overall structure of the ambient group.
In a similar fashion, the centralizer restrictions we obtain allow for non-nilpotent centralizers but are otherwise severely limiting. We are able to employ results from~\cite{martineau_1972} and~\cite{stewart_1973} (building on work of Suzuki in~\cite{suzukiCentralizers,suzukiDoublyTransitive}) in dealing with extensions of $2$-groups by non-abelian simple groups. We point out that same results are also employed in the classification of non-solvable EPPO-groups in~\cite{cameron2021criterion}.

\paragraph{Outline of the paper} We give some preliminary information about notation and the relevant theory of finite groups in Section~\ref{section:preliminaries}. Then we discuss power-cograph groups in Section~\ref{section:po-co groups} and we go into detail about centralizers in Section~\ref{section:centralizers}. For the classification, we consider power-cograph groups in terms of the solvability of the group. We discuss solvable groups in Section~\ref{section:solvable} and non-solvable ones in Section~\ref{section:non-solvable}. The proofs of the main theorems can be found in Section~\ref{section:summary}.

\section{Preliminaries}\label{section:preliminaries}
In the following we collect some necessary group theoretic basics. Readers familiar with finite group theory might want to skip this section. As a general reference for basics on group theory, we refer to~\cite{gorenstein_1968} and \cite{passman_1968}.

\paragraph{Notation}
Given a set of primes $\pi$, we say a \emph{$\pi$-element} in a group $G$ is an element whose order is only divisible by primes in $\pi$. Similarly a \emph{$\pi$-group} is a group that entirely consists of $\pi$-elements.
We say that an automorphism of a group $H$ is \emph{fixed-point-free} if it only fixes the identity element in $H$. Given a normal subgroup $N$ of a group $G$, then $G/N$ acts \emph{fixed-point-freely} on $N$ if the induced action by conjugation is fixed-point-free for every non-trivial element in $G/N$. General extensions of $H$ by $N$, i.e., groups with normal subgroup isomorphic to $N$ and corresponding quotient isomorphic to $H$, are denoted by $N.H$.
We use $U\lesssim G$ to denote that $U$ is isomorphic to a subgroup of $G$.
\paragraph{Group structure}

The following theorems fundamental to group structure theory are invoked throughout this paper.

\begin{thm}[Schur-Zassenhaus, see~\cite{gorenstein_1968}]
	Let $G$ be a finite group admitting a normal subgroup $N\trianglelefteq G$, such that $|N|$ and $|G/N|$ are coprime. Then $N$ has a complement in $G$, that is, $G$ is isomorphic to the semidirect product $N\rtimes G/N$.
\end{thm}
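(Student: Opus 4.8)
The statement asserts only the existence of a complement, so the plan is to prove this by induction on $|G|$, with the genuine content concentrated in the case where $N$ is abelian. Write $n = |N|$ and $m = |G/N|$, so that $\gcd(n,m)=1$ and it suffices to produce a subgroup $H \le G$ with $|H| = m$; by order considerations such an $H$ automatically satisfies $H \cap N = 1$ and $HN = G$, hence is a complement. I would first dispose of the reduction to abelian $N$, treating the \emph{abelian} case as a base case that the induction may invoke.

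For the reduction, I would fix a prime $p \mid n$, choose a Sylow $p$-subgroup $P$ of $N$, and apply the Frattini argument to obtain $G = N \cdot N_G(P)$. Setting $H = N_G(P)$, one has $G/N \cong H/(H\cap N)$ with $H \cap N = N_N(P)$, a normal subgroup of $H$ whose order divides $n$ and is therefore coprime to $m$. If $H$ is a proper subgroup of $G$, then the inductive hypothesis applied to $H$ yields a complement $K$ to $H \cap N$ in $H$; since $K \le H$ we get $K \cap N = K \cap (H \cap N) = 1$ and $|K| = m$, so $K$ is the desired complement in $G$. The remaining possibility is $N_G(P) = G$, i.e. $P \trianglelefteq G$; then $Z := Z(P)$ is a nontrivial abelian subgroup, characteristic in $P$ and hence normal in $G$. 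Passing to $G/Z$, the image $N/Z$ is normal with quotient of order $m$ and order coprime to $m$, so by induction $N/Z$ has a complement $K/Z$ in $G/Z$. Inside the smaller group $K$ (note $K \cap N = Z$), the abelian normal subgroup $Z$ has index $m$ coprime to $|Z|$, so the abelian base case produces a complement $K_0$ to $Z$ in $K$; as before $K_0 \cap N = K_0 \cap Z = 1$ and $|K_0| = m$, completing the reduction.

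It then remains to settle the base case in which $N$ is abelian, where $G/N$ acts on $N$ by conjugation. Here I would choose a transversal $\{t_q : q \in G/N\}$ and record the factor set $f\colon G/N \times G/N \to N$ determined by $t_q t_{q'} = f(q,q')\, t_{qq'}$, which obeys the $2$-cocycle identity $f(q,q')\,f(qq',q'') = {}^{q}f(q',q'')\, f(q,q'q'')$ coming from associativity. Taking the product over $q''$ and setting $d(q) = \prod_{q'' \in G/N} f(q,q'')$ collapses this to a relation of the shape $f(q,q')^{m} = {}^{q}d(q')\, d(q)\, d(qq')^{-1}$. Since $\gcd(m,n)=1$, the $m$-th power map is an automorphism of $N$, so each $d(q)$ has a unique $m$-th root $c(q)$, and this root commutes with the $G/N$-action; substituting $d = c^{m}$ and cancelling $m$-th powers gives $f(q,q') = {}^{q}c(q')\, c(q)\, c(qq')^{-1}$. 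The modified transversal $s_q = c(q)^{-1} t_q$ therefore has trivial factor set, so $\{s_q : q \in G/N\}$ is a subgroup of order $m$ and hence the sought complement.

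I expect the abelian base case to be the main obstacle, since the reduction steps are formal manipulations built on the Frattini argument, whereas the base case is precisely the vanishing of $H^2(G/N,N)$ in the coprime situation and rests essentially on the coprime $m$-th root trick. Note finally that only existence is claimed here; the conjugacy of all complements would additionally require solvability of $N$ or of $G/N$ (automatic by Feit--Thompson), which is not needed for this statement.
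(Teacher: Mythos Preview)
The paper does not give its own proof of this theorem: it is quoted in the preliminaries as a classical result with a reference to Gorenstein, and is used only as a black box later on. There is therefore nothing in the paper to compare against.

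That said, your argument is correct and is essentially the standard textbook proof. The Frattini reduction via a Sylow subgroup of $N$ and the passage through $Z(P)$ are exactly the usual steps; your averaging argument in the abelian case is the familiar direct verification that $H^2(G/N,N)=0$ when $\gcd(|G/N|,|N|)=1$, and the computation of the cocycle identity, the product over $q''$, and the $m$-th root trick are all carried out accurately. Your closing remark that conjugacy of complements is a separate issue (handled via solvability of one of the factors, ultimately by Feit--Thompson) is also apt and matches how the result is typically packaged.
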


\begin{thm}[Burnside's $p^aq^b$-Theorem,~\cite{burnside_1904}]
If a finite group $G$ is of order $p^a q^b$ with primes $p$ and $q$, then $G$ is solvable.
\end{thm}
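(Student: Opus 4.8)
The statement is Burnside's $p^aq^b$-theorem, a classical result, so I sketch the standard character-theoretic proof. The plan is to argue by induction on $|G|$, reduce to the case of nonabelian simple groups, and then rule these out using ordinary character theory.

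\emph{Reduction to the simple case.} Suppose the theorem fails and let $G$ be a counterexample of minimal order. If $G$ has a nontrivial proper normal subgroup $N$, then $|N|$ and $|G/N|$ are again of the form $p^{a'}q^{b'}$ and have smaller order, so both are solvable by minimality; since an extension of a solvable group by a solvable group is solvable, $G$ would be solvable, a contradiction. Hence $G$ is simple. A $p$-group (the case $b=0$) is nilpotent, hence solvable, so we may assume $a,b\ge 1$ and that $G$ is a nonabelian simple group of order $p^aq^b$.

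\emph{Producing a prime-power conjugacy class.} Let $Q$ be a Sylow $q$-subgroup of $G$. As $Q$ is a nontrivial $q$-group, $Z(Q)\ne 1$; choose $1\ne z\in Z(Q)$. Then $Q\le C_G(z)$, so the index $[G:C_G(z)]$, which equals the size of the conjugacy class of $z$, is a power of $p$. Since $G$ is nonabelian simple we have $Z(G)=1$, so $z$ is noncentral and its class size is $p^k$ with $k\ge 1$.

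\emph{The character-theoretic contradiction.} The heart of the argument is Burnside's lemma: if $\chi\in\mathrm{Irr}(G)$ and the class size $|C|$ of $g$ is coprime to $\chi(1)$, then either $\chi(g)=0$ or $g$ is represented by a scalar in the corresponding representation. One proves this by noting that the central character value $|C|\chi(g)/\chi(1)$ is an algebraic integer and, via a B\'ezout identity coming from $\gcd(|C|,\chi(1))=1$, deducing that $\chi(g)/\chi(1)$ is an algebraic integer; a triangle-inequality argument over the Galois conjugates then forces $|\chi(g)|\in\{0,\chi(1)\}$. For our $z$ with class size $p^k$ and any nontrivial $\chi$ with $p\nmid\chi(1)$, the scalar alternative is impossible: the elements acting as a scalar in a nontrivial (hence faithful, by simplicity) irreducible representation $\rho$ form a normal subgroup mapping into the center of $\rho(G)\cong G$, which is trivial as $G$ is nonabelian simple; so $z$ cannot act as a nontrivial scalar, forcing $\chi(z)=0$. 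Feeding this into the column-orthogonality relation $\sum_{\chi}\chi(1)\chi(z)=0$ and isolating the trivial character gives $1+p\cdot\alpha=0$, where $\alpha=\sum_{\chi\ne 1,\ p\mid\chi(1)}(\chi(1)/p)\,\chi(z)$ is an algebraic integer. Hence $-1/p$ is an algebraic integer, which is absurd, and this contradiction completes the induction.

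\emph{Main obstacle.} The only genuinely hard ingredient is Burnside's character lemma together with its algebraic-integer input; everything else is elementary Sylow theory and the orthogonality relations. A character-free proof exists (Goldschmidt, Matsuyama, Bender), but it is considerably more involved, so the representation-theoretic route above is the cleanest.
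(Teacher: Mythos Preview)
The paper does not prove this theorem at all; it is stated in the Preliminaries section purely as a classical result with a citation to~\cite{burnside_1904}, to be used as a black box later. So there is no proof in the paper to compare against.

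That said, your sketch is the standard character-theoretic argument and is correct. The reduction to a nonabelian simple group of order $p^aq^b$, the choice of a central element $z$ in a Sylow $q$-subgroup to obtain a conjugacy class of $p$-power size, Burnside's lemma on $\chi(g)/\chi(1)$ being an algebraic integer when $\gcd(|z^G|,\chi(1))=1$, and the column-orthogonality contradiction $1+p\alpha=0$ are all accurately stated. One small wording issue: you should say $|\chi(z)|=\chi(1)$ (equivalently, $z$ acts as a scalar) in the dichotomy, not merely that $z$ ``is represented by a scalar'' without noting this is the same as $|\chi(z)|=\chi(1)$; but you effectively use the correct version. There is no mathematical gap.
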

As a consequence, the order of every finite non-abelian simple group is divisible by at least three distinct primes.

\paragraph{Fitting-subgroup and socle}
Recall that the \emph{Fitting-subgroup} $\fit(G)$ of a group $G$ is the (unique) largest
normal nilpotent subgroup in $G$. Furthermore, let $\fit_2(G)$ denote the subgroup of $G$ that satisfies $\fit_2(G) / \fit(G) = \fit(G/\fit(G))$.
Given a prime $p$, we denote the (unique) largest normal $p$-subgroup of a group $G$ by $\mathcal{O}_p(G)$ (also called the \emph{$p$-radical}). In case that $G$ is a finite group, it holds that
\[	
	\fit(G)=\prod_{p\mid |G|}\mathcal{O}_p(G)
\] is a direct product over all $p$-radicals in $G$. If $G$ is furthermore solvable, then $\fit(G)$ is self-centralizing, i.e.,~$C_G(\fit(G))=Z(\fit(G))$, and then $G/\fit(G)$ is isomorphic to a subgroup of $\aut(\fit(G))$.

The \emph{socle} of $G$, denoted $\soc(G)$, is the subgroup generated by all minimal normal subgroups of $G$. The socle of a finite group is a direct product of simple groups and if $\fit(G)$ is trivial, then all direct factors of $\soc(G)$ are non-abelian simple.

\paragraph{Frobenius groups}
Let $H$ be a proper nontrivial subgroup of $G$ such that $H \cap H^x = 1$ for all $x \in G\setminus H$. Then $K=G\setminus \bigcup_{x\in G} (H\setminus 1)^x$ is a normal subgroup of $G$ such that $G= K \rtimes H$. In this case, we say that $G$ is a \textit{Frobenius} group, with \textit{Frobenius complement} $H$ and \textit{Frobenius kernel} $K$.  

Let $G$ be a finite Frobenius group with kernel $K$ and complement $H$. Then the following hold (for proofs see for example~\cite{passman_1968, thompson_1959, wilson_2009, zassenhaus_1935}):
	\begin{enumerate}
	    \item $K$ is nilpotent and in fact~$K=\fit(G)$.
	    \item For all $k\in K$, we have $C_H(k)=1$.
	    \item If $H$ has even order, then $K$ is abelian.  
        \item If $G$ is solvable, then the Sylow~$p$-subgroups of $H$ are cyclic if $p > 2$ and cyclic or generalized quaternion if $p=2$.
        \item If a Frobenius complement $H$ is not solvable, then it has a normal subgroup of index $1$ or $2$ that is the direct product of $\mathrm{SL}(2,5)$ and a metacyclic group of order coprime to $30$.
	\end{enumerate}

A group $G$ is said to be a \textit{$2$-Frobenius group} if there exist two normal subgroups $F$
and $L$ of $G$ such that $G / F$ is a Frobenius group with kernel $L / F$ and $L$ is a Frobenius group with kernel $F$. Finally, let us point out the following well known fact about $2$-Frobenius groups.

\begin{lem}[{\cite[Theorem 2]{chen_1995}}]
	All $2$-Frobenius groups are solvable.
\end{lem}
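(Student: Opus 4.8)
The plan is to exploit the normal series $1 \trianglelefteq F \trianglelefteq L \trianglelefteq G$ and to show separately that each of the three factors $F$, $L/F$ and $G/L$ is solvable; since solvability is closed under extensions, this will finish the proof.

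First I would dispatch the two ``inner'' factors by observing that both are nilpotent. Indeed, $L$ is a Frobenius group with kernel $F$, so $F$ is nilpotent by the first of the listed properties of Frobenius groups, and likewise $G/F$ is a Frobenius group with kernel $L/F$, so $L/F$ is nilpotent as well. In particular $F$ and $L/F$ are solvable, and hence $L$ is solvable.

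The crux is then to control the complement $G/L$. A priori it is only known to be a Frobenius complement of $G/F$, and Frobenius complements can be non-solvable (they may involve $\mathrm{SL}(2,5)$), so one cannot argue naively here. The key point I would use is that $L/F$ plays a double role: it is simultaneously the nilpotent kernel of $G/F$ and the complement of the now-known-to-be-solvable Frobenius group $L$. Applying the Sylow structure of complements of solvable Frobenius groups to $L$, every Sylow subgroup of $L/F$ is cyclic or generalized quaternion. Next I would argue that $L/F$ has odd order: were $|L/F|$ even, its (cyclic or generalized quaternion) Sylow $2$-subgroup would contain a unique involution, generating a characteristic subgroup of order $2$ that is fixed by every automorphism of $L/F$; but $G/L \neq 1$ acts fixed-point-freely on $L/F$, so no nonidentity element of $L/F$ may be fixed, a contradiction. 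A nilpotent group of odd order all of whose Sylow subgroups are cyclic is itself cyclic, so $L/F$ is cyclic.

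Finally, the fixed-point-free action of $G/L$ on $L/F$ is faithful, since a nonidentity element acting trivially would fix all of $L/F \neq 1$; hence $G/L$ embeds into $\aut(L/F)$. As $L/F$ is cyclic, $\aut(L/F)$ is abelian, so $G/L$ is abelian and in particular solvable. Combining the three solvable factors yields that $G$ is solvable. The main obstacle, and the step I expect to require the most care, is precisely resisting the temptation to treat $G/L$ as an unconstrained Frobenius complement; the resolution is the cyclicity of $L/F$ forced by its dual role, which collapses $G/L$ into an abelian automorphism group.
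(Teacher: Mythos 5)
Your proof is correct. For comparison: the paper gives no proof of this lemma at all --- it is quoted directly from Chen (1995, Theorem 2) --- so your argument is a self-contained replacement rather than a variant of anything in the text. What you give is essentially the classical structure argument for $2$-Frobenius groups, and it uses only facts the paper lists in its preliminaries: Thompson's nilpotency of Frobenius kernels makes $F$ and $L/F$ (hence $L$) solvable; solvability of $L$ then licenses the stated Sylow structure of its complement $L/F$; since $L/F$ is also nilpotent (being the kernel of $G/F$), a cyclic or generalized quaternion Sylow $2$-subgroup would contain a unique involution, which is characteristic in $L/F$ and therefore fixed under the fixed-point-free action of the nontrivial complement $G/L$, so $|L/F|$ is odd and $L/F$ is cyclic (a nilpotent group whose Sylow subgroups are all cyclic is cyclic); finally, the fixed-point-free action of $G/L$ on $L/F$ is faithful, embedding $G/L$ into the abelian group $\aut(L/F)$. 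Each step is justified, and you correctly identify and avoid the one genuine trap: $G/L$ cannot be treated as an arbitrary Frobenius complement (those may involve $\spl_2(5)$); it is the dual role of $L/F$, as kernel of $G/F$ and simultaneously complement in $L$, that collapses $G/L$ to an abelian group. As a side remark, your argument proves slightly more than solvability: it yields the standard structure theorem that in a $2$-Frobenius group both $L/F$ and $G/L$ are cyclic.
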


\paragraph{Outer automorphisms of $\psl_2(q)$ and $\sz(q)$}
We just give a brief summary here, for more details see~\cite{wilson_2009}.
The outer automorphisms of the classical groups generally come in three different flavors: diagonal, field and graph automorphisms.

For $\psl_2(q)$, the diagonal automorphism corresponds the the action of $\textsc{PGL}_2(q)$ on $\psl_2(q)$. It is induced by conjugation with 
$$\begin{pmatrix}
1 & 0\\
0 & \epsilon
\end{pmatrix},\text{ where $\epsilon$ is a non-square in $\mathbb{F}_q^*$.}$$  In particular, diagonal automorphisms only exist for odd $q$. In case of $\psl_2(q)$, the graph automorphisms appear as inner automorphisms.  

The field automorphisms are induced by automorphisms of the underlying field $\mathbb{F}_q$ of order $q=p^f$, whose automorphism group is cyclic of order $f$, generated by the Frobenius automorphism $x \to x^p$. The induced group of field automorphisms acts on $\psl_2(q)$ by mapping each matrix entry to its $p$-th power (for any matrix representing the respective element of $\psl_2(q)$).

In the case of $\sz(q)$, the simple Suzuki groups only have field automorphisms as outer automorphisms, acting on the natural representation of $\sz(q)$ in $\gl_4(q)$.

\paragraph{The Gruenberg-Kegel graph (or prime graph)}
The \textit{Gruenberg-Kegel graph} or the \textit{prime graph} of $G$, denoted $\pi(G)$, has vertex set the prime divisors of $|G|$, and two primes $p$ and $q$ are adjacent if and only if there exists an element of order $pq$ in $G$.
We refer to the following result as Lucido's Three Prime Lemma:
\begin{lem}[{\cite[Proposition 1]{Lucido1999}}]
Let $G$ be a finite solvable group. If $p, q, r$ are three
different primes dividing $|G|$, then $G$ contains an element whose order is the product of two of these primes.
\end{lem}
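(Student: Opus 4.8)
The plan is to argue by induction on $|G|$, taking $G$ to be a solvable group of minimal order for which some three primes $p,q,r$ dividing $|G|$ violate the conclusion; thus $G$ has no element whose order is $pq$, $pr$, or $qr$. I would begin with two routine reductions. First, since $G$ is solvable it has a Hall $\{p,q,r\}$-subgroup $H$; if $H\neq G$ then $H$ is a smaller counterexample, so I may assume $|G|=p^aq^br^c$. Second, I would examine $\fit(G)=\mathcal{O}_p(G)\times\mathcal{O}_q(G)\times\mathcal{O}_r(G)$: if two of these radicals, say $\mathcal{O}_p(G)$ and $\mathcal{O}_q(G)$, were both nontrivial, then being normal with coprime orders they would centralize each other, and elements of orders $p$ and $q$ taken from them would multiply to an element of order $pq$. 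Hence $\fit(G)$ is a nontrivial $s$-group for a single prime $s$; relabeling, I set $\fit(G)=P:=\mathcal{O}_p(G)$, which is self-centralizing since $G$ is solvable.

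Next I would pass to $\bar G=G/P$. As $P$ is a $p$-group, the primes $q$ and $r$ still divide $|\bar G|$. If $p$ also divides $|\bar G|$, then $\bar G$ is a smaller solvable group to which all three primes apply, so by induction it contains an element $\bar x$ whose order $mn$ is a product of two of $p,q,r$; lifting $\bar x$ to any $x\in G$, the order of $x$ is a multiple of $mn$, so the cyclic group $\langle x\rangle$ contains an element of order exactly $mn$, contradicting the choice of $G$. Therefore $p\nmid|\bar G|$, i.e.\ $P$ is the full (normal) Sylow $p$-subgroup, and by Schur--Zassenhaus $G=P\rtimes K$ with $K$ a $\{q,r\}$-group. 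The assumption that $G$ has no element of order $pq$ forces every element of order $q$ to act without nontrivial fixed points on $P$: a nontrivial fixed point of order $p$ would commute with the element of order $q$ and yield an element of order $pq$. Passing from an arbitrary $q$-element $g$ to its power of order $q$, whose centralizer in $P$ contains $C_P(g)$, I conclude that every $q$-element --- and symmetrically every $r$-element --- acts fixed-point-freely on $P$.

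The heart of the argument, and the step I expect to be the main obstacle, is to convert these fixed-point conditions into a contradiction. I would apply the same Fitting analysis inside $K$: since $K$ has no element of order $qr$, $\fit(K)$ is a nontrivial group of single prime power, say $\fit(K)=Q_0:=\mathcal{O}_q(K)$ with $\mathcal{O}_r(K)=1$. A Sylow $r$-subgroup $R$ of $K$ is nontrivial and normalizes $Q_0$; a nontrivial fixed point of $R$ on $Q_0$ would again produce a commuting pair of elements of orders $q$ and $r$, so $R$ acts fixed-point-freely on $Q_0$ as well as on $P$. Now I would invoke the standard coprime-action fact that fixed points descend to quotients: applied to the coprime action of the $r$-group $R$ on $PQ_0=P\rtimes Q_0$ with invariant normal subgroup $P$, it gives $C_{PQ_0}(R)P/P=C_{Q_0}(R)=1$, whence $C_{PQ_0}(R)\leq P$ and then $C_{PQ_0}(R)=C_P(R)=1$. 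Since $R$ normalizes both $P$ and $Q_0$, the subgroup $PQ_0$ is normal in $H:=PQ_0\rtimes R$, so $H$ is a Frobenius group with complement $R$ and nilpotent kernel $PQ_0$. But a nilpotent $\{p,q\}$-group is the direct product of its Sylow subgroups, forcing $[P,Q_0]=1$, which contradicts the fact that the nontrivial $q$-group $Q_0$ acts fixed-point-freely on the nontrivial group $P$. This rules out the remaining case and completes the proof. The delicate points to get right are the precise fixed-point-free bookkeeping and the coprime-action step that builds the auxiliary Frobenius group, whereas the Hall and lifting reductions are routine.
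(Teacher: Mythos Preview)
The paper does not supply its own proof of this lemma; it is quoted verbatim from Lucido~\cite{Lucido1999} and used as a black box. So there is nothing in the paper to compare against, and your write-up stands on its own. Overall your argument is correct and follows the standard route to Lucido's Three Primes Lemma: reduce to a Hall $\{p,q,r\}$-subgroup, force $\fit(G)$ to be a $p$-group, split off a $\{q,r\}$-complement $K$, show all nontrivial elements of $K$ act fixed-point-freely on $P$, and then build an auxiliary Frobenius group whose kernel is nilpotent (Thompson), yielding $[P,Q_0]=1$ and a contradiction.

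One point deserves tightening. From $C_{PQ_0}(R)=1$ alone you cannot conclude that $H=PQ_0\rtimes R$ is a Frobenius group: Frobenius requires $C_{PQ_0}(g)=1$ for \emph{every} nontrivial $g\in R$, not just for $R$ as a whole. Fortunately you have already established exactly this, element by element: each nontrivial $g\in R$ satisfies $C_P(g)=1$ and $C_{Q_0}(g)=1$, and since $g$ normalises both $P$ and $Q_0$, the unique factorisation in $PQ_0=P\rtimes Q_0$ gives $C_{PQ_0}(g)=1$ directly. So the coprime-action step is in fact unnecessary; just invoke the element-wise fixed-point-freeness you already proved. With that adjustment the proof is clean.

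A small remark on machinery: invoking Thompson's theorem (nilpotence of Frobenius kernels) is legitimate but heavy. An alternative endgame avoids it: since every nontrivial element of $K$ has $q$-power or $r$-power order (there being no elements of order $qr$), the whole of $K$ acts fixed-point-freely on $P$, so $K$ is a Frobenius complement. Then every subgroup of $K$ of order $qr$ must be cyclic, but taking elements of orders $q$ and $r$ from $Q_0$ and $R$ produces a subgroup of order $qr$, hence an element of order $qr$ in $K$ after all. Either route works; yours is fine once the Frobenius step is stated for individual elements of $R$.
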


In~\cite{williams1981}, Williams classified the number of connected components of the prime graph for the simple groups of Lie type over fields of odd characteristic, alternating groups, and the $26$ sporadic simple groups. The case of simple groups of Lie type in even characteristic has been treated in~\cite{kondratev1989, iiyori_yamaki_1993}.
The following result for prime graphs of solvable groups is due to Gruenberg and Kegel (published by Williams in~\cite{williams1981} and slightly modified in~\cite{manna2021forbidden}):

\begin{lem}[{\cite[Theorem 3.4]{manna2021forbidden}}]\label{lem:disconnected_pi}
The prime graph of $G$ is disconnected only if $G$ satisfies one of the following:
\begin{enumerate}
    \item $G$ is Frobenius;
    \item $G$ is $2$-Frobenius;
    \item $|G|$ is even and $G$ is an extension of a nilpotent $\pi_1$-group by a simple group by a $\pi_1$-group. Here $\pi_1$ denotes the component of $2$ in $\pi(G)$.
\end{enumerate}
\end{lem}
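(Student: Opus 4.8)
The plan is to convert the combinatorial hypothesis that $\pi(G)$ is disconnected into a statement about centralizers, and then to branch according to whether $G$ is solvable. Throughout, write $\pi_1,\dots,\pi_t$ (with $t\ge 2$) for the connected components of $\pi(G)$, adopting the convention that $2\in\pi_1$ whenever $|G|$ is even.

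The first step, which drives everything that follows, is the observation that \emph{for every component $\pi_i$ and every nontrivial $\pi_i$-element $x$, the centralizer $C_G(x)$ is a $\pi_i$-group}. Indeed, if some $g\in C_G(x)$ had order divisible by a prime $q\notin\pi_i$, then a power of $x$ of prime order $p\in\pi_i$ and a power of $g$ of order $q$ commute and generate a cyclic group containing an element of order $pq$; this would join $p$ and $q$ in $\pi(G)$, contradicting $q\notin\pi_i$. Hence elements from distinct components are strongly separated.

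For the solvable case I would first apply Lucido's Three Prime Lemma, which forces the independence number of $\pi(G)$ to be at most $2$; a disconnected graph with this property has exactly two components $\pi_1,\pi_2$. Since $\fit(G)$ is nilpotent it is the direct product of its Sylow subgroups, so if it involved primes from both components it would contain an element of order $pq$ with $p\in\pi_1$, $q\in\pi_2$; therefore $\fit(G)$ is a $\pi_a$-group for a single $a\in\{1,2\}$. Using that $\fit(G)$ is self-centralizing for solvable $G$ together with the centralizer observation, a Hall subgroup $H$ for the opposite component satisfies $C_{\fit(G)}(h)=1$ for every $1\ne h\in H$, so $H$ acts without nontrivial fixed points and $\fit(G)\rtimes H$ is Frobenius. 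If $G=\fit(G)H$ then $G$ is Frobenius; otherwise $G/\fit(G)$ again carries a fixed-point-free action of the same shape, and a short analysis of this quotient shows $G$ is Frobenius-by-Frobenius, that is, $2$-Frobenius. This produces Cases 1 and 2.

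The non-solvable case is the main obstacle. Since a non-solvable group has even order by the Feit--Thompson theorem, $2\in\pi_1$. I would analyse $G$ through its generalized Fitting subgroup $F^{*}(G)=\fit(G)E(G)$, using $C_G(F^{*}(G))\le F^{*}(G)$: because the layer $E(G)$ centralizes $\fit(G)$ and every component of $E(G)$ has even order, $\fit(G)$ cannot be a $\pi_j$-group for any $j\ge 2$, so it is a nilpotent $\pi_1$-group and serves as the bottom of the desired series. The delicate point is controlling $E(G)$ (equivalently, the socle of the quotient with trivial Fitting subgroup): a nontrivial layer is a direct product of isomorphic quasisimple components, elements from distinct factors commute, and each non-abelian simple factor has order divisible by at least three primes (Burnside's $p^aq^b$-theorem). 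A product of two or more factors would therefore turn the whole prime set of the layer into a single clique of $\pi(G)$, reconnecting primes that must lie in distinct components. This should force a single non-abelian simple section $K/N_1$ whose prime set absorbs all of $\pi_2,\dots,\pi_t$, once any central part is shown to fall into $\fit(G)$ so that the section is simple rather than merely quasisimple. Finally, $G/K$ embeds into the outer automorphism group of $K/N_1$, and the centralizer condition constrains it to be a $\pi_1$-group, giving the top $\pi_1$-group and the nilpotent-by-simple-by-$\pi_1$ structure of Case 3. I expect the hardest technical point to be exactly this control of the layer: proving that there is a single simple factor and that the relevant section is genuinely simple, since that is where the interaction between the prime-graph components and the subnormal structure of $G$ must be pinned down.
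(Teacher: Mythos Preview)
The paper does not supply a proof of this lemma; it is cited from~\cite{manna2021forbidden} (the result goes back to Gruenberg--Kegel, published in~\cite{williams1981}). So there is no in-paper argument to compare against, and I assess your outline on its own merits.

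The non-solvable branch contains a genuine gap. Your claim that $\fit(G)$ is a $\pi_1$-group is derived from the fact that $E(G)$ has even order and centralizes $\fit(G)$, but this tacitly assumes $E(G)\neq 1$. That assumption fails for non-solvable Frobenius groups $G=K\rtimes H$: since $E(G)$ always centralizes $\fit(G)=K$ while $C_G(K)\le K$ in any Frobenius group, one gets $E(G)\le K$ and hence $E(G)=1$. Here $|H|$ is even (Feit--Thompson) and $\gcd(|K|,|H|)=1$, so $K=\fit(G)$ has odd order and lies entirely in some $\pi_j$ with $j\ge 2$, directly contradicting your conclusion. Such groups belong to Case~1 of the lemma and are \emph{not} instances of Case~3: for $G=K\rtimes\mathrm{SL}_2(5)$ one checks that $Z(G)=1$ and that $G$ has no non-abelian simple normal subgroup, so no normal series of the required shape exists. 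Thus the dichotomy ``solvable $\Rightarrow$ Cases~1--2, non-solvable $\Rightarrow$ Case~3'' is incorrect; the Frobenius alternative must be isolated before (or independently of) the $F^*(G)$ analysis, not recovered as a byproduct of solvability. Your solvable branch is on the right track, but ``a short analysis of this quotient'' hides a real inductive step along the Fitting series that deserves to be written out.
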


\section{Power graphs and power-cograph groups}\label{section:po-co groups}
Throughout this work we only consider finite groups. The following section collects basic definitions and some known results on power-cograph groups.
\begin{dfn}
	The \textit{power graph} $\pow(G)$ of a group $G$ is the undirected graph defined over the vertex set $G$ with an edge joining $g,h\in G$ if and only if $g^n=h$ or $h^n=g$ holds for some $n\in\mathbb{N}$.
\end{dfn}

\begin{dfn}
	A graph is a \emph{cograph} if it does not contain $P_4$ (i.e., a path with four vertices) as an induced subgraph. We say a group $G$ is a \emph{power-cograph group} if $\pow(G)$ is a cograph.
\end{dfn}

The power graph of a subgroup $H\leq G$ is an induced subgraph of $\pow(G)$. In particular, the class of power-cograph groups is closed under taking subgroups. 

\begin{lem}
Let $G$ be an arbitrary finite group.
\begin{enumerate}
	\item Let $g,h\in G$ such that $\langle g\rangle=\langle h\rangle$ holds, then $g$ and $h$ form twins in $\pow(G)$, i.e., they are joined by an edge and have identical sets of neighbors.
	\item If $G$ is a cyclic $p$-group, then $\pow(G)$ is a clique.
\end{enumerate}
\end{lem}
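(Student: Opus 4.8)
The plan is to prove both parts directly from the definition of $\pow(G)$, using only elementary properties of cyclic subgroups; no deeper machinery is needed.

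For part~1, the first thing to establish is that $g$ and $h$ are adjacent. Since $\langle g\rangle=\langle h\rangle$, we may write $g=h^a$ and $h=g^b$ for suitable integers $a,b$, so the defining condition for an edge is met. The substantive content is showing that $g$ and $h$ are twins, i.e.\ that for every vertex $k\notin\{g,h\}$ we have $k\sim g$ if and only if $k\sim h$. I would fix such a $k$ adjacent to $g$ and show it is adjacent to $h$; the reverse implication then follows by symmetry, since the hypothesis $\langle g\rangle=\langle h\rangle$ is symmetric in $g$ and $h$. There are two cases according to which direction of the power relation witnesses $k\sim g$. If $k=g^n$, then substituting $g=h^a$ gives $k=h^{an}\in\langle h\rangle$, so $k$ is a power of $h$ and hence $k\sim h$. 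If instead $g=k^n$, I use that $h\in\langle g\rangle$, say $h=g^s$, to get $h=k^{ns}$, so $h$ is a power of $k$ and again $k\sim h$. Combining this with the edge $g\sim h$ yields the twin property.

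For part~2, the key observation is that the subgroups of a cyclic $p$-group are totally ordered by inclusion: writing $G=\langle g\rangle$ with $|G|=p^k$, they form the chain $1<\langle g^{p^{k-1}}\rangle<\cdots<\langle g\rangle=G$. Given any two elements $a,b\in G$, the cyclic subgroups $\langle a\rangle$ and $\langle b\rangle$ are therefore comparable, so one contains the other; without loss of generality $\langle a\rangle\subseteq\langle b\rangle$, whence $a$ is a power of $b$ and $a\sim b$. As $a$ and $b$ were arbitrary, every pair of distinct vertices is adjacent and $\pow(G)$ is a clique.

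Neither part presents a genuine obstacle. The only point requiring slight care is the bookkeeping in part~1, namely keeping the two directions of the power relation separate and invoking the symmetry of the hypothesis correctly, so that both inclusions of neighborhoods are obtained rather than just one.
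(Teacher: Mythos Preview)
Your proof is correct. The paper states this lemma without proof, treating both parts as immediate from the definition of $\pow(G)$; your direct verification from the definition is exactly the intended elementary argument.
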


In special cases the power-cograph property also propagates to quotients.
\begin{lem}\label{lem:lifting_P4}
	Let $G$ be a finite group, $g_1,\dots,g_4\in G$ and $N\trianglelefteq G$.
	Assume that $(g_1N,\dots,g_4N)$ induces a $P_4$ in $\pow(G/N)$ and that (w.l.o.g.) $g_2N$ is a power of both $g_1N$ and $g_3N$.
	If $\gcd(|N|,|C_{G/N}(g_2N)|)=1$, then $(g_1,\dots,g_4)$ induces a~$P_4$ in $G$.
\end{lem}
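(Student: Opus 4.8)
The claim is about the \emph{given} elements $g_1,\dots,g_4$, so the plan is to reconstruct, inside each cyclic subgroup $\langle g_i\rangle$, the adjacency pattern already present among the cosets $g_iN$, while simultaneously ruling out the \emph{extra} edges ($g_1g_3$, $g_2g_4$, $g_1g_4$) that would destroy the induced $P_4$. I would first record the behavior of the power-graph relation under the projection $\varphi\colon G\to G/N$. If $g$ is a power of $h$ in $G$, then $gN$ is a power of $hN$; hence the projection can only \emph{add} edges, never remove them. Consequently the three non-edges of the $P_4$ in $\pow(G/N)$ force the corresponding three pairs to be non-edges in $\pow(G)$ automatically: the pairs $(g_1,g_3)$, $(g_2,g_4)$, $(g_1,g_4)$ are non-adjacent in $\pow(G)$ because their images are non-adjacent in $\pow(G/N)$. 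So the entire difficulty is concentrated on \emph{producing} the three edges of the path, i.e., promoting the power relations that hold modulo $N$ to genuine power relations in $G$.

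\textbf{Promoting the edges via the coprimality hypothesis.} The hypothesis says $g_2N$ is a power of both $g_1N$ and $g_3N$, and that $\gcd(|N|,|C_{G/N}(g_2N)|)=1$. The key structural input is that $N$ is disjoint in order from the centralizer of $g_2N$. I would argue as follows. Write $\bar g_i:=g_iN$. For the edge $g_1g_2$, we know $\bar g_2=\bar g_1^{\,k}$ for some $k$, so $g_1^k$ and $g_2$ lie in the same coset, i.e.\ $g_1^k=g_2 n$ for some $n\in N$. The aim is to show that after possibly replacing $g_1$ and $g_3$ by suitable generators of their own cyclic subgroups (which is legitimate: replacing a vertex by a generator of the same cyclic group preserves all adjacencies by the twin lemma, and does not change the cosets up to taking a power, hence preserves the $P_4$ in the quotient), one can arrange that $g_2$ is literally a power of $g_1$ and of $g_3$ in $G$. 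The mechanism is that within the cyclic group $\langle \bar g_1\rangle$, the element $\bar g_2$ is a power; lifting this to $\langle g_1\rangle$, the ``$N$-error'' $n$ must vanish because its order divides $|N|$ while it must also be expressible through $g_2$, whose image has order coprime to $|N|$ in the relevant quotient. Concretely, I would decompose $g_1$ (and $g_3$) into a $\pi(N)$-part and a $\pi(N)'$-part using the coprimality, show the $\pi(N)$-part does not obstruct the power relation modulo the Hall structure, and conclude that the $\pi(N)'$-part alone already realizes $g_2$ as a power.

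\textbf{The heart of the argument.} The crucial step, and the one I expect to be the main obstacle, is the order bookkeeping that forces the lifted power relation to be \emph{exact} rather than merely correct modulo $N$. I would phrase it through centralizers: since $g_2N$ is fixed by $g_1N$ under the power-map it commutes with, the ambient obstruction to lifting lives in $N\cap\langle\text{relevant subgroup}\rangle$ and its image in $C_{G/N}(g_2N)$; the coprimality $\gcd(|N|,|C_{G/N}(g_2N)|)=1$ collapses this obstruction to the trivial element. One must be careful that $g_2$ itself may have order divisible by primes in $\pi(N)$, so the statement is not that $g_2\in\langle g_1\rangle$ outright, but that some generator of $\langle g_2\rangle$, equivalently $g_2$ after absorbing the (trivial) error, is a power of a chosen generator of $\langle g_1\rangle$; the twin lemma then transfers the resulting edge back to the original vertex $g_2$.

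\textbf{Conclusion.} Once both edges $g_1g_2$ and $g_2g_3$ are established in $\pow(G)$, and the edge $g_3g_4$ is handled by the same lifting argument applied to whichever of $g_3N,g_4N$ is the power of the other (the lemma's symmetric hypothesis covers the remaining path edge, since a $P_4$ on $\bar g_1,\dots,\bar g_4$ supplies edges $\bar g_1\bar g_2$, $\bar g_2\bar g_3$, $\bar g_3\bar g_4$, and the coprimality at $g_2$ governs the two edges incident to $g_2$ while the edge $g_3g_4$ lifts automatically from containment in $\langle g_3\rangle$ or $\langle g_4\rangle$), we have exactly the three required edges and exactly the three required non-edges. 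Therefore $(g_1,\dots,g_4)$ induces a $P_4$ in $\pow(G)$, which is the assertion.
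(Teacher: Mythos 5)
Your treatment of the non-edges is correct (the quotient map preserves power relations, so non-adjacency of the images forces non-adjacency of the $g_i$), but the heart of your argument --- that after replacing the $g_i$ by generators of their \emph{own} cyclic subgroups one can make $g_2$ an honest power of $g_1$ and of $g_3$ in $G$ --- is not a repairable gap: that claim is false, and in fact the lemma read literally for the original elements is false. Take $G=\langle n\rangle\times\langle x\rangle\times\langle y\rangle\cong C_5\times C_6\times C_2$ with $N=\langle n\rangle$, and $g_1=x$, $g_2=nx^2$, $g_3=xy$, $g_4=x^3y$. The images $(\bar{x},\bar{x}^2,\bar{x}\bar{y},\bar{x}^3\bar{y})$ induce a $P_4$ in $\pow(G/N)$ with $\bar{g}_2=\bar{g}_1^2=\bar{g}_3^2$, and $\gcd(|N|,|C_{G/N}(g_2N)|)=\gcd(5,12)=1$; yet in $\pow(G)$ the vertex $g_2$ is adjacent to neither $g_1$ nor $g_3$, since $nx^2\notin\langle x\rangle\cup\langle xy\rangle$ and $\langle nx^2\rangle\cap\bigl(\langle x\rangle\cup\langle xy\rangle\bigr)=\{1,x^2,x^4\}$ contains neither $x$ nor $xy$. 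No twin replacement can create the missing edge, because replacing an element by a generator of the same cyclic subgroup does not change the pair of cyclic subgroups on which adjacency depends. Even the more generous reading you gesture at (replacing each $g_i$ by its $\pi(N)'$-Hall part, which is a power of $g_i$ lying in the same coset) fails once $N$ is not central: in $G=(C_5\rtimes C_4)\times C_3$ with $N=C_5=\langle n\rangle$ and $C_4=\langle b\rangle$ acting faithfully, the elements $g_1=b^3$ and $g_2=b^2n$ are already $5'$-elements and satisfy $\bar{g}_2=\bar{g}_1^{\,2}$, but neither lies in the cyclic group generated by the other. So the conclusion one can actually prove (and the only one the paper ever uses, in the lemma after Lemma~\ref{lem:2_isolated}) is: there exist representatives $h_i\in g_iN$ such that $(h_1,\dots,h_4)$ induces a $P_4$ in $\pow(G)$; equivalently, $\pow(G)$ is not a cograph.

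The missing idea, which is the paper's entire proof, is to use Schur--Zassenhaus to \emph{change coset representatives}, rather than to do order bookkeeping inside the given cyclic subgroups. Since $g_2N$ is a power of $g_1N$ and of $g_3N$, and $g_4N$ is comparable with $g_3N$ under the power relation, all four cosets lie in $C:=C_{G/N}(g_2N)$. Let $H\leq G$ be the preimage of $C$; then $N\trianglelefteq H$ and $|N|$ is coprime to $|H/N|=|C|$, so $H=N\rtimes K$ where the quotient map restricts to an isomorphism $K\to C$. A group isomorphism induces an isomorphism of power graphs, so the induced $P_4$ on $(g_1N,\dots,g_4N)$ in $\pow(C)$ pulls back to an induced $P_4$ in $\pow(K)\subseteq\pow(G)$ on representatives of the same four cosets. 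These representatives in general lie outside $\langle g_1\rangle,\dots,\langle g_4\rangle$ (in the second example above, $K=\langle b,c\rangle$ replaces $g_2=b^2n$ by $b^2\notin\langle g_2\rangle$), which is precisely why any strategy confined to the subgroups $\langle g_i\rangle$, including yours, cannot succeed. Your incidental claim that the edge $g_3g_4$ ``lifts automatically'' suffers from the same defect: no edge lifts automatically to prescribed representatives.
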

\begin{proof}
	We assume that $\gcd(|N|,|C_{G/N}(g_2N)|)=1$ holds. By the Schur-Zassenhaus Theorem, $G$ contains a subgroup that is a split extension $N\rtimes C_{G/N}(g_2N)$. In particular,
	there exists a subgroup of $G$ that is isomorphic to $C_{G/N}(g_2N)$ and the latter contains all the $g_iN$.
\end{proof}

Finite nilpotent power-cograph groups are characterized in~\cite{manna2021forbidden} as follows:

\begin{thm}[{\cite[Theorem 12]{manna2021forbidden}}]\label{thm:nilpotent_poco}
Let $G$ be a finite nilpotent group. Then $Pow(G)$ is a cograph if and only if either $|G|$ is a prime power, or $G$ is cyclic of order $pq$ for distinct
primes $p$ and $q$.
\end{thm}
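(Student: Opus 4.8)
The plan is to prove both implications, exploiting that a finite nilpotent group decomposes as the direct product of its Sylow subgroups $G = P_1 \times \cdots \times P_k$ and that the class of power-cograph groups is closed under subgroups. First I would record the key structural feature: in a direct product of coprime cyclic groups the cyclic subgroup generated by an element factors through the projections, so that for $g = (g_1,\dots,g_k)$ one has $\langle g\rangle = \langle g_1\rangle \times \cdots \times \langle g_k\rangle$. Consequently $g$ and $h$ are adjacent in $\mathrm{Pow}(G)$ if and only if $\langle g_i\rangle \subseteq \langle h_i\rangle$ for all $i$, or $\langle h_i\rangle \subseteq \langle g_i\rangle$ for all $i$. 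This single rule is what I would use both to forbid and to construct induced paths.

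For the ``if'' direction I would treat the two cases separately. When $|G| = p^a$ is a prime power, I claim $\mathrm{Pow}(G)$ has no induced $P_4$, and the crucial point is that the subgroups of a cyclic $p$-group form a chain under inclusion. Hence for any three elements $a,b,c$ of a $p$-group with $a \sim b$, $b \sim c$ but $a \not\sim c$, one deduces $\langle b\rangle \subsetneq \langle a\rangle$ and $\langle b\rangle \subsetneq \langle c\rangle$: comparability of $\langle a\rangle,\langle b\rangle$ and of $\langle b\rangle,\langle c\rangle$ together with the chain property forces $\langle b\rangle$ strictly below both, for otherwise $\langle a\rangle$ and $\langle c\rangle$ would lie inside a common cyclic subgroup and hence be comparable. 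Applying this to the two sub-paths $a\,b\,c$ and $b\,c\,d$ of a hypothetical induced $P_4$ given by $a - b - c - d$ yields simultaneously $\langle b\rangle \subsetneq \langle c\rangle$ and $\langle c\rangle \subsetneq \langle b\rangle$, a contradiction. For $G \cong C_{pq}$ I would argue directly: after deleting the universal vertices (the identity and the generators, which cannot occur in any induced $P_4$), the remaining graph is the disjoint union of two cliques, formed by the elements of order $p$ and those of order $q$, which is a cograph.

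For the ``only if'' direction I would assume $G$ is nilpotent, not of prime power order and not isomorphic to $C_{pq}$, and exhibit an induced $P_4$ inside a suitable subgroup. Three explicit configurations suffice, each checked via the adjacency rule: a copy of $C_{pqr}$ yields the path on elements of orders $p,pq,q,qr$; a copy of $C_{p^2q}$ yields the path on orders $q,pq,p,p^2$; and a copy of $C_p\times C_p\times C_q$ yields the path $(x,1) - (x,y) - (1,y) - (x',y)$. The case analysis then runs as follows. If at least three primes divide $|G|$, prime-order elements chosen in three distinct Sylow subgroups generate a copy of $C_{pqr}$. Otherwise $G = P\times Q$ has exactly two prime divisors; if some Sylow subgroup contains an element of order $p^2$, then together with a prime-order element of the other Sylow subgroup it generates $C_{p^2q}$. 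In the remaining case both Sylow subgroups have prime exponent, so each is either cyclic of prime order or noncyclic, and a noncyclic $p$-group of exponent $p$ contains $C_p\times C_p$ (it cannot be generalized quaternion, since those have exponent at least $4$), producing a copy of $C_p\times C_p\times C_q$. Only when both Sylow subgroups are cyclic of prime order does no forbidden subgroup appear, and then $G\cong C_{pq}$.

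The main obstacle I anticipate is making the converse case analysis genuinely exhaustive while correctly accounting for the structure of finite $p$-groups, in particular the fact that a noncyclic $p$-group need not contain $C_p\times C_p$, the generalized quaternion groups being the exception. This subtlety is dissolved by organizing the split around the exponent of the Sylow subgroups rather than around cyclicity: any generalized quaternion group already contains an element of order $p^2$ and is therefore absorbed into the $C_{p^2q}$ case. The remaining points, namely the explicit verifications that the three small groups contain induced paths, are routine once the coprime factorization of cyclic subgroups is in hand.
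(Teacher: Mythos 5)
Your proof is correct, and a point of context first: the paper you were given does not prove this statement at all --- it is quoted from \cite{manna2021forbidden} (Theorem 12 there), so there is no internal proof to compare against, and your argument serves as a complete, self-contained replacement. All the essential steps check out. The factorization $\langle g\rangle=\langle g_1\rangle\times\cdots\times\langle g_k\rangle$ across the Sylow decomposition and the resulting componentwise adjacency rule are valid; the chain property of subgroups of a cyclic $p$-group correctly forces, for any induced path $a-b-c$ in the power graph of a $p$-group, that $\langle b\rangle\subsetneq\langle a\rangle$ and $\langle b\rangle\subsetneq\langle c\rangle$, and applying this to both triples of an induced $P_4$ gives the contradiction $\langle b\rangle\subsetneq\langle c\rangle\subsetneq\langle b\rangle$; the universal-vertex reduction handles $C_{pq}$; and the three forbidden configurations $C_{pqr}$, $C_{p^2q}$ and $C_p\times C_p\times C_q$, together with the case split by the number of prime divisors and the exponents of the Sylow subgroups, are exhaustive. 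The only step that leans on an outside classification is the claim that a noncyclic $p$-group of exponent $p$ contains $C_p\times C_p$; your appeal to the exclusion of generalized quaternion groups is legitimate (this is the standard classification of $p$-groups with no subgroup $C_p\times C_p$), but in your restricted setting it can also be done by hand, making the whole proof elementary: if the center $Z$ is noncyclic it is elementary abelian of rank at least $2$; if $Z\cong C_p$, pick $a\notin Z$ of order $p$, note $\langle a\rangle\cap Z=1$, and then $\langle a\rangle Z\cong C_p\times C_p$. One cosmetic remark: in your path for $\pow(C_{pqr})$ the vertex orders are $p,\,pq,\,q,\,qr$, so you should state explicitly (as your verification implicitly uses) that the middle vertices are the elements of orders $pq$ and $q$, i.e.\ the path is $x - xy - y - yz$ with non-edges exactly $\{x,y\}$, $\{x,yz\}$, $\{xy,yz\}$.
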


Recent work of Cameron, Manna and Mehatari~\cite{cameron2021finite} investigates several graph classes in an attempt to determine groups whose power graph is a cograph. Their main result on the class of finite simple groups states the following:

\begin{thm}[{\cite[Theorem 3.2]{cameron2021finite}}]\label{thm:finite_simple_po-cographs}
Let $G$ be a non-abelian finite simple group. Then $G$ is a power-cograph if and only if one of the following holds:
	\begin{enumerate}
		\item $G=\psl_2(q)$, where $q$ is an odd prime power with $q\geq 5$, and each of $(q-1)/2$ and $(q+1)/2$ is either a prime power or the product of two distinct primes;
		\item $G=\psl_2(q)$, where $q$ is a power of $2$ with $q\geq 4$, and each of $q-1$ and $q+1$ is either a prime power or the product of two distinct primes;
		\item $G=\sz(q)$, where $q=2^{2e+1}$ for $e\geq 2$, and each of $q-1$, $q+\sqrt{2q}+1$ and $q-\sqrt{2q}+1$ is either a prime power or the product of two distinct primes;
		\item $G=\psl_3(4)$.
	\end{enumerate}
\end{thm}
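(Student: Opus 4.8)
The plan is to split the statement into a necessary-conditions part and a sufficiency part, with a crude arithmetic restriction doing the first cut and a finer centralizer/trivial-intersection condition carrying the rest. First I would extract the \emph{spectrum condition}: since the power-cograph property is inherited by subgroups and each $\langle g\rangle$ is cyclic (hence nilpotent), Theorem~\ref{thm:nilpotent_poco} forces the order of every element of $G$ to be a prime power or a product $rs$ of two distinct primes. This already discards most simple groups, but it is not sufficient on its own: $A_7$ satisfies it (its element orders are $1,2,3,4,5,6,7$) yet is not a power-cograph group. I would therefore strengthen it to a centralizer condition. Suppose $g,g'$ both have order $rs$ and share the same $r$-part, i.e.\ $\langle g^{s}\rangle=\langle g'^{s}\rangle=:\langle a\rangle$, but have different $s$-parts $\langle g^{r}\rangle\neq\langle g'^{r}\rangle$. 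Writing $a=g^{s}$ and $b'=g'^{r}$, the tuple $(g,a,g',b')$ induces a $P_4$: the consecutive pairs are joined because $a\in\langle g\rangle$, $a\in\langle g'\rangle$ and $b'\in\langle g'\rangle$, while $g\not\sim g'$ (distinct cyclic groups of order $rs$), $g\not\sim b'$ and $a\not\sim b'$. Hence for every element $a$ of prime order $r$ and every prime $s$ with $rs$ in the spectrum, $C_G(a)$ must contain a unique subgroup of order $s$ (equivalently its Sylow $s$-subgroup is cyclic or generalized quaternion). This is exactly the trivial-intersection obstruction that $A_7$ violates and that the surviving families respect.

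Next I would prove sufficiency for the listed groups through the trivial-intersection structure of their maximal cyclic subgroups. In $\psl_2(q)$ and $\sz(q)$ the maximal cyclic subgroups are precisely the cyclic maximal tori together with the cyclic subgroups of a Sylow $p$-subgroup, and these form TI-sets (a standard feature of these groups, tied to their CN/CIT structure: a nontrivial semisimple element lies in a unique maximal torus, and the Sylow $p$-subgroups are TI). Consequently two non-identity elements in different maximal cyclic subgroups are non-adjacent, so after noting that the identity is a universal vertex, $\pow(G)$ is the join of the single vertex $\{1\}$ with the disjoint union, over the maximal cyclic subgroups $M$, of the graphs $\pow(M)\setminus\{1\}$. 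Since disjoint unions and joins both preserve the cograph property, $\pow(G)$ is a cograph if and only if each $\pow(M)$ is, i.e.\ if and only if each $|M|$ is a prime power or a product of two distinct primes.

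Reading off the relevant orders then produces the arithmetic conditions. For $\psl_2(q)$ the orders are $p$ (always a prime power) and $(q-1)/\gcd(2,q-1)$, $(q+1)/\gcd(2,q-1)$, which gives cases (1) and (2) depending on the parity of $q$; for $\sz(q)$ they are $4$, $q-1$, $q+\sqrt{2q}+1$ and $q-\sqrt{2q}+1$, giving case (3). For $\psl_3(4)$ every element order is a prime power, so the group is EPPO and is a power-cograph group by Cameron's observation, yielding case (4) with no arithmetic side condition.

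The hard part, and the main obstacle, is the CFSG enumeration showing that no further simple group survives. Working family by family, I would use known element-order (spectrum) data to exhibit in each remaining case either a forbidden order of the form $r^2 s$ or $rst$ (failing the spectrum condition) or a pair of order-$rs$ elements sharing an $r$-part but not an $s$-part (failing the centralizer condition, exactly as in the $A_7$ computation). The delicate cases are precisely those that pass the crude spectrum test, where one must locate the finer centralizer obstruction rather than a bad element order; combined with verifying the TI property for the surviving families and carrying out the exact arithmetic translation of the torus orders, this accounts for the bulk of the work.
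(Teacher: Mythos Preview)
The paper does not prove this theorem. Theorem~\ref{thm:finite_simple_po-cographs} is quoted from~\cite{cameron2021finite} as a known result and used as a black box throughout (for instance in Lemmas~\ref{lemma: almost_simple_to_simple_or_pgl} and~\ref{lem: abelian-socle_quotient_by_O2}); no argument for it appears here. There is therefore nothing in the present paper to compare your proposal against.

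For what it is worth, your plan is sound and close in spirit to the ambient material: the spectrum constraint together with the uniqueness-of-$s$-part centralizer obstruction you isolate is exactly the characterization the paper rederives as Lemma~\ref{lem:characterization_via_centralizers} (and which~\cite{cameron2021finite} states as their Theorem~3.1). One technical slip: your claim that $\pow(G)\setminus\{1\}$ decomposes as a disjoint union over maximal cyclic subgroups is not literally true for $\sz(q)$, because the Sylow $2$-subgroup has exponent $4$ with central involutions, so each involution lies in many maximal cyclic $C_4$'s and the maximal cyclic subgroups do not partition the non-identity elements. The repair is immediate---$2$ is isolated in $\pi(\sz(q))$ and the $2$-elements sit inside TI Sylow $2$-subgroups, which as $2$-groups are automatically power-cograph, so one only needs the partition on the odd-order elements---but as written that step needs patching.
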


The discussion that follows this theorem in~\cite{cameron2021finite} acknowledges the difficulties of solving the number-theoretical problems, but they provide some small values for which the theorem surely holds. However, the problem of determining if there are infinitely many such groups in the first three cases remains open.

\section{Centralizers in power-cograph groups}\label{section:centralizers}
Centralizers in power-cograph groups turn out to be quite restricted, which forms the basis of our analysis. 

\begin{lem}\label{lem:basic_obstructions_for_p_elts}
	Consider commuting group elements $x,w\in G$ with $|x|=p$ and $|w|=q$ for
	two distinct primes $p$ and $q$. If $Pow(G)$ is a cograph, then all $q$-elements in $C_G(x)$ are contained in
	$\langle w\rangle$. Furthermore, $x$ is not a $p$-th power in $G$.
\end{lem}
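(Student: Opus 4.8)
The plan is to prove both assertions by contradiction, in each case exhibiting a forbidden induced $P_4$ in $\pow(G)$ (or, in one subcase, appealing to the nilpotent classification of Theorem~\ref{thm:nilpotent_poco}). The recurring tool is the elementary observation that whenever two elements $a,b$ commute and have coprime orders, $\langle a,b\rangle=\langle ab\rangle$ is cyclic, so that $a$ and $b$ are both powers of $ab$; dually, adjacency of $g$ and $h$ in the power graph means $g\in\langle h\rangle$ or $h\in\langle g\rangle$, which for elements of prescribed order reduces to a containment of cyclic $p$- and $q$-parts that is routine to check. Throughout I would use that $\langle xw\rangle=\langle x\rangle\times\langle w\rangle$ is cyclic of order $pq$, so $x$ and $w$ are both powers of $xw$.

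For the first assertion, suppose some $q$-element $v\in C_G(x)$ lies outside $\langle w\rangle$. Since $\langle w\rangle$ has prime order $q$, exactly one of two situations occurs: either $\langle w\rangle\cap\langle v\rangle=1$, or $w\in\langle v\rangle$. In the first situation I would exhibit the induced path $(w,\,xw,\,x,\,xv)$: the edges $w\sim xw$, $xw\sim x$, $x\sim xv$ come from the coprime-order observation, while the three required non-edges $\{w,x\}$, $\{w,xv\}$ and $\{xw,xv\}$ follow from $w\notin\langle v\rangle$ and $v\notin\langle w\rangle$ by comparing the $q$-parts of the relevant cyclic subgroups (e.g.\ $w$ is a power of $xv$ only if $w\in\langle v\rangle$). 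The second situation, $w\in\langle v\rangle$, is exactly where the explicit-path approach breaks down, since then $xw$ becomes a power of $xv$ and the candidate path collapses; this is the main point to treat with care. Here I would instead observe that $v\notin\langle w\rangle$ forces $\langle w\rangle\subsetneq\langle v\rangle$, so $|v|=q^k$ with $k\geq 2$, whence $\langle x,v\rangle\cong C_{pq^k}$ is a subgroup of $G$ of order neither a prime power nor a product of two distinct primes. Since the power-cograph property passes to subgroups, Theorem~\ref{thm:nilpotent_poco} shows $\pow(\langle x,v\rangle)$, and hence $\pow(G)$, is not a cograph, a contradiction.

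For the second assertion, suppose $x=y^p$ for some $y\in G$. A short order computation pins down $y$: writing $|y|=p^a b$ with $p\nmid b$, the relation $|y^p|=|x|=p$ forces $a=2$ and $b=1$, so $|y|=p^2$. Then $x\in\langle y\rangle$ is a power of $y$, and combining this with the commuting pair $x,w$ I would exhibit the induced path $(y,\,x,\,xw,\,w)$: the edges $y\sim x$, $x\sim xw$, $xw\sim w$ are immediate, and the non-edges $\{y,xw\}$, $\{y,w\}$, $\{x,w\}$ hold by comparing orders, since $y$ is a $p$-element of order $p^2$ whereas $w$ and $xw$ have order divisible by $q$. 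This $P_4$ contradicts the cograph hypothesis, completing the argument. The only genuinely delicate step in the whole proof is the case $w\in\langle v\rangle$ above, where the direct construction fails and one must switch to the subgroup-closure argument via Theorem~\ref{thm:nilpotent_poco}.
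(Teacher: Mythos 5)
Your proof is correct and follows essentially the same route as the paper: both arguments exhibit the same induced $P_4$'s (up to replacing vertices by generators of the same cyclic subgroups) and both fall back on Theorem~\ref{thm:nilpotent_poco} together with subgroup-closedness to dispose of $q$-elements of order at least $q^2$ in $C_G(x)$. The only cosmetic difference is the case split in the first assertion (you split on whether $\langle v\rangle\cap\langle w\rangle$ is trivial, the paper splits on the order of the offending element), which changes nothing of substance.
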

\begin{proof}
	Assume that there is another element of order $q$ in $C_G(x)$, $z$ say, such that $z\not\in \langle w\rangle$. Then 
	\[
		z^p\longleftarrow xz\longrightarrow x^q\longleftarrow xw
	\]is an induced $P_4$ in $Pow(G)$ (we draw directions for clarity), contradicting the assumption that $Pow(G)$ is a cograph.
	Thus, $\langle w\rangle$ is the unique subgroup of order $q$ in $C_G(x)$. Again using the 
	assumption that $Pow(G)$ is a cograph, we note that, by the classification of nilpotent power-cograph groups (Theorem~\ref{thm:nilpotent_poco}) and due to subgroup-closedness of power-cograph groups, there are no elements of order $q^2$ in $C_G(x)$. 
	
	Similarly, if $x$ has a $p$-th root, say $g^p=x$ for a suitable $g\in G$, then
	\[
		w^p\longleftarrow xw\longrightarrow x^q\longleftarrow g^{q}
	\]	is an induced $P_4$ in $\pow(G)$.
\end{proof}

\begin{lem}\label{lem:centralizers_nonisolated_elements}
	Consider commuting group elements $x,w\in G$ with $|x|=p$ and $|w|=q$ for
	primes $q < p$. Assume $Pow(G)$ is a cograph. Then
	\begin{enumerate}
		\item $C_G(x)=\langle w,x\rangle\cong C_p\times C_q$, and
		\item $C_G(w)=\langle x\rangle\rtimes Q$,
		where $\langle x\rangle\cong C_p$ and $Q\cong\langle w\rangle\times C_{q^n}$ for some $n\in\mathbb{N}_0$. If $n>0$ then $Q/\langle w\rangle$ acts without fixed points on $\langle x\rangle$, so $q^n$ divides $p-1$. 
	\end{enumerate}
\end{lem}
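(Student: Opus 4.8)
The plan is to derive both statements from Lemma~\ref{lem:basic_obstructions_for_p_elts}, applied to several commuting pairs of prime-order elements, together with the subgroup-closedness of power-cograph groups and the nilpotent classification (Theorem~\ref{thm:nilpotent_poco}). In particular, the recurring obstruction will be that a cyclic subgroup $C_{pqr}$ on three distinct primes is forbidden by Theorem~\ref{thm:nilpotent_poco}, which lets me bound the set of primes dividing a centralizer order.

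For part 1 I would first pin down the Sylow structure of $C_G(x)$. Lemma~\ref{lem:basic_obstructions_for_p_elts} (and the remarks in its proof) already give that $\langle w\rangle$ is the unique subgroup of order $q$ and that $C_G(x)$ has no element of order $q^2$, so $\langle w\rangle$ is the normal Sylow $q$-subgroup. The key step is to show the Sylow $p$-subgroup $P$ equals $\langle x\rangle$: here I use the hypothesis $q<p$, which forces $p\nmid q-1=|\aut(\langle w\rangle)|$, so $P$ centralizes $w$ and hence $P\leq C_G(w)$; applying Lemma~\ref{lem:basic_obstructions_for_p_elts} to the commuting pair $(w,x)$ shows every $p$-element of $C_G(w)$ lies in $\langle x\rangle$, whence $P=\langle x\rangle$. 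Finally, to rule out a third prime $r$ dividing $|C_G(x)|$, I observe that its Sylow $r$-subgroup is again cyclic of order $r$ and normal (Lemma~\ref{lem:basic_obstructions_for_p_elts} applied to $(x,y)$ with $|y|=r$); since two normal subgroups of coprime order centralize each other, $\langle x,w,y\rangle\cong C_{pqr}$ would be a cyclic subgroup of order $pqr$, contradicting Theorem~\ref{thm:nilpotent_poco}. Thus $|C_G(x)|=pq$ and $C_G(x)=\langle x,w\rangle\cong C_p\times C_q$.

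For part 2 the same two ingredients show that $|C_G(w)|$ is divisible only by $p$ and $q$ and that $\langle x\rangle$ is the unique, hence normal, Sylow $p$-subgroup; Schur--Zassenhaus then yields $C_G(w)=\langle x\rangle\rtimes Q$ for a Sylow $q$-subgroup $Q$. Now $w$ is central in $Q$, and using part 1 one computes $C_Q(x)=Q\cap C_G(x)=\langle w\rangle$, so the conjugation action of $Q$ on $\langle x\rangle$ has kernel $\langle w\rangle$ and $Q/\langle w\rangle$ embeds faithfully into $\aut(\langle x\rangle)\cong C_{p-1}$, hence is cyclic of order $q^n$ with $q^n\mid p-1$. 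The final structural claim $Q\cong\langle w\rangle\times C_{q^n}$ is what I expect to be the main obstacle, and I would extract it as follows: since $Q/\langle w\rangle$ is cyclic and $\langle w\rangle$ is central, $Q/Z(Q)$ is cyclic and so $Q$ is abelian; since $w$ is not a $q$-th power in $G$ (Lemma~\ref{lem:basic_obstructions_for_p_elts}), the order-$q$ subgroup $\langle w\rangle$ meets $Q^q$ trivially, so $\langle w\rangle$ is a direct factor of the abelian $q$-group $Q$, and the complementary factor must be cyclic of order $q^n$ because $Q/\langle w\rangle\cong C_{q^n}$. The fixed-point-freeness is then automatic: the action of $Q/\langle w\rangle$ on $\langle x\rangle$ is faithful, and every nontrivial automorphism of $C_p$ fixes only the identity, so every nontrivial element of $Q/\langle w\rangle$ acts without fixed points, and faithfulness gives $q^n\mid p-1$.

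I anticipate that the delicate point is the determination of the isomorphism type of $Q$: one must combine centrality of $w$, cyclicity of $Q/\langle w\rangle$, and the ``not a $q$-th power'' obstruction to conclude that $Q$ is abelian with $\langle w\rangle$ as a direct factor, rather than, say, cyclic of order $q^{n+1}$ (which the $q$-th-power obstruction is exactly designed to exclude). Everything else reduces to careful bookkeeping with Lemma~\ref{lem:basic_obstructions_for_p_elts} and the prime-divisor count supplied by the forbidden cyclic $C_{pqr}$.
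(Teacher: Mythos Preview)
Your argument is correct and follows the same overall strategy as the paper: use Lemma~\ref{lem:basic_obstructions_for_p_elts} to pin down the Sylow $q$- and $p$-subgroups of the two centralizers, then invoke Theorem~\ref{thm:nilpotent_poco} and the ``$w$ is not a $q$-th power'' clause to obtain the exact isomorphism type of $Q$. The only noteworthy difference is cosmetic: where the paper tersely asserts normality of the Sylow $p$-subgroup of $C_G(x)$ ``by the Sylow Theorems'' and then writes $C_G(x)=\langle w\rangle\times P$ without further comment, you give the cleaner justification $p\nmid q-1\Rightarrow P\leq C_G(w)\Rightarrow P=\langle x\rangle$ and explicitly exclude a third prime divisor via the forbidden $C_{pqr}$, which makes your write-up slightly more complete on these two points.
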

\begin{proof}
	\begin{enumerate}
		\item By Lemma \ref{lem:basic_obstructions_for_p_elts}, $\langle w\rangle$ is normal in $C_G(x)$.
		Since $q<p$, there is a unique normal Sylow $p$-subgroup $P$ of $C_G(x)$ by the Sylow Theorems. Hence, $C_G(x)=\langle w\rangle\times P$ is nilpotent and thus $P=\langle x\rangle\cong C_p$ via Theorem~\ref{thm:nilpotent_poco}.
		\item As in the first case, $\langle x\rangle\trianglelefteq C_G(w)$ and then
		$C_G(w)=\langle x\rangle\rtimes Q$ for a $q$-group $Q$ by the Schur-Zassenhaus Theorem.
		Again using Lemma \ref{lem:basic_obstructions_for_p_elts}, we note that, up to powers, $w$ is the unique $q$-element
		in $C_G(x)$. So $C_Q(x)=\langle w\rangle$ and thus $Q/\langle w\rangle\leq \aut(\langle x\rangle)\cong C_{p-1}$. By definition, $w\in Z(Q)$ which implies that $Q/Z(Q)$ is cyclic, hence $Q$ is abelian. The lemma further gives us that $w$ is not a $q$-th power and thus $Q\lesssim\langle w\rangle\times C_{q^n}$.\qedhere
	\end{enumerate}
\end{proof}

\begin{lem}\label{lem:sylows_nonisolated_elements}
	If $Pow(G)$ is a cograph and $\{p,q\}$ is an edge of $\pi(G)$ with $q<p$, then the Sylow $p$-subgroups of $G$ are isomorphic to $C_p$ and Sylow $q$-subgroups of $G$ are of the form
	$C_q\times C_{q^n}$ with $n\in\mathbb{N}_0$ or have cyclic center.
\end{lem}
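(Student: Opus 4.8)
The plan is to read off everything from the centralizer descriptions in Lemma~\ref{lem:centralizers_nonisolated_elements} together with elementary Sylow theory. Since $\{p,q\}$ is an edge of $\pi(G)$, there is an element of order $pq$, and taking appropriate powers produces commuting elements $x,w$ with $|x|=p$ and $|w|=q$; these are exactly the hypotheses of Lemma~\ref{lem:centralizers_nonisolated_elements}, so both parts of that lemma are available.

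For the Sylow $p$-subgroups, I would fix a Sylow $p$-subgroup $P$ of $G$ containing $x$. By Lemma~\ref{lem:centralizers_nonisolated_elements}(1) we have $C_G(x)=\langle w,x\rangle\cong C_p\times C_q$, whose unique Sylow $p$-subgroup is $\langle x\rangle$. Consequently $C_P(x)=P\cap C_G(x)=\langle x\rangle$. On the other hand $Z(P)$ is nontrivial and centralizes $x$, so $Z(P)\leq C_P(x)=\langle x\rangle$, which forces $Z(P)=\langle x\rangle$ and in particular $x\in Z(P)$. But then $C_P(x)=P$, and comparing with $C_P(x)=\langle x\rangle$ gives $P=\langle x\rangle\cong C_p$. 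As all Sylow $p$-subgroups are conjugate, this settles the first claim.

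For the Sylow $q$-subgroups I would invoke Lemma~\ref{lem:centralizers_nonisolated_elements}(2): $C_G(w)=\langle x\rangle\rtimes Q$ with $Q\cong\langle w\rangle\times C_{q^n}\cong C_q\times C_{q^n}$ abelian, and $Q$ is a Sylow $q$-subgroup of $C_G(w)$ since $|\langle x\rangle|=p$ is coprime to $q$. Choose a Sylow $q$-subgroup $S$ of $G$ with $Q\leq S$ (possible, as $Q$ is a $q$-group). A short Sylow count then identifies $C_S(w)=S\cap C_G(w)$ with $Q$: it is a $q$-subgroup of $C_G(w)$, hence of order at most $|Q|$, while $w\in Q$ and $Q$ abelian give $Q\leq C_S(w)$. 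Since $Z(S)$ centralizes $w\in S$, we obtain $Z(S)\leq C_S(w)=Q$. Now I would split on whether $Z(S)$ is cyclic: if it is, we are in the second alternative and done. If $Z(S)$ is non-cyclic, then, being a non-cyclic subgroup of the rank-two abelian $q$-group $Q\cong C_q\times C_{q^n}$, it must contain the full subgroup $\Omega_1(Q)$ of elements of order dividing $q$; in particular $w\in\Omega_1(Q)\leq Z(S)$. Then $S$ centralizes $w$, so $S\leq C_G(w)$ is a $q$-subgroup of $C_G(w)$, whence $|S|\leq|Q|$; combined with $Q\leq S$ this yields $S=Q\cong C_q\times C_{q^n}$. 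Because all Sylow $q$-subgroups are conjugate and both alternatives are conjugation-invariant, the conclusion transfers to every Sylow $q$-subgroup.

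Everything reduces to routine Sylow bookkeeping once the centralizer structure of Lemma~\ref{lem:centralizers_nonisolated_elements} is in hand, so I do not expect a genuine obstacle. The two points needing care are the identification $C_S(w)=Q$ (which is what places $Z(S)$ inside $Q$) and the elementary fact that a non-cyclic subgroup of a rank-two abelian $q$-group contains the whole socle $\Omega_1(Q)$; it is precisely this fact that converts non-cyclicity of $Z(S)$ into the membership $w\in Z(S)$ and thereby pins down $S$ as $C_q\times C_{q^n}$.
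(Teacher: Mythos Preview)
Your proof is correct and follows essentially the same route as the paper: use Lemma~\ref{lem:centralizers_nonisolated_elements} to force $Z(P)=\langle x\rangle$ and hence $P=\langle x\rangle$, and then for a Sylow $q$-subgroup $S\supseteq Q$ observe $Z(S)\leq C_S(w)=Q$ and split into two cases. The only cosmetic difference is that the paper phrases the dichotomy as ``$w\in Z(S)$ versus $w\notin Z(S)$'' rather than ``$Z(S)$ non-cyclic versus cyclic''; your $\Omega_1(Q)$ argument is exactly what makes these two splits equivalent, and you spell out more carefully than the paper why $C_S(w)=Q$ and why the conclusion propagates to all Sylow $q$-subgroups.
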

\begin{proof}
	By the previous lemma, there exists a $p$-element $x\in G$ with $C_G(x)\cong C_p\times C_q$. Consider a Sylow $p$-subgroup $P$ of $G$ containing $x$. Then $1\neq Z(P)\leq C_G(x)$, so $Z(P)=\langle x\rangle$.
	But then even $P\leq C_G(x)$ holds, thus $P=\langle x\rangle$.
	
	Now consider $w\in G$ with $|w|=q$ and assume $p$ divides $|C_G(w)|$. In this case the previous lemma still implies $C_G(w)\cong C_p\rtimes Q$, where $Q$ is isomorphic to $\langle w\rangle\times C_{q^n}$ for some $n\geq0$.
	Let $H$ be a Sylow $q$-subgroup of $G$ containing $Q$. If $w\in Z(H)$, then $H\lesssim (C_q\times C_{q^n})$. If $w\not\in Z(H)$, then $Z(H)\lesssim C_{q^n}$.
\end{proof}

\begin{lem}\label{lem:semidirect_nontrivial_prime_graph}
	Consider a semidirect product $G\cong Q\rtimes C_p$ where $Q$ is a $q$-group for some prime $q\neq p$. If $Pow(G)$ is a cograph and $\pi(G)$ is not the null graph, then $G\cong C_q\times C_p$.
\end{lem}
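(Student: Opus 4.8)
The plan is to assume $\pi(G)$ is nonempty, so there is at least one edge between $p$ and $q$, and then show $Q$ must be cyclic of order $q$ while the action of $C_p$ on $Q$ is trivial. First I would observe that since $\pi(G)$ is not the null graph and $G \cong Q \rtimes C_p$ with $Q$ a $q$-group, the only possible edge in $\pi(G)$ is $\{p,q\}$; this edge being present means there is an element of order $pq$, equivalently some element of $Q$ is centralized by a conjugate of $C_p$. After replacing $C_p$ by a suitable conjugate, I may assume $C_p$ itself centralizes some nontrivial $w \in Q$ of order $q$.

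Next I would apply the centralizer structure results already established. Since $x$ (a generator of $C_p$) and $w$ commute with $|x| = p$, $|w| = q$, Lemma~\ref{lem:centralizers_nonisolated_elements} applies. The key point is the direction of the inequality: here I want to invoke part~(1), which requires $q < p$, to conclude $C_G(x) \cong C_p \times C_q$, and then Lemma~\ref{lem:sylows_nonisolated_elements} to force the Sylow $q$-subgroup $Q$ of $G$ to have restricted shape. If instead $p < q$ I would apply the lemmas with the roles reversed. In either case, the goal is to show that $Q$ is forced to be elementary abelian of rank one, i.e.\ $Q \cong C_q$, and simultaneously that the action is trivial.

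The main obstacle I anticipate is controlling the full $q$-group $Q$, not merely the centralizer of a single element. Lemma~\ref{lem:sylows_nonisolated_elements} tells me the Sylow $q$-subgroup is of the form $C_q \times C_{q^n}$ or has cyclic center, which is weaker than $C_q$. To close this gap I would use the semidirect product structure: $Q$ is normal in $G$, so $C_p$ acts on $Q$ by automorphisms, and I would combine the centralizer restrictions with the fact that $C_p$ permutes the elements of $Q$. The decisive step should be that the presence of an order-$pq$ element forces $C_p$ to act nontrivially on $Q$ unless the action is trivial, and then a $P_4$-avoidance argument — likely reusing the configurations from Lemma~\ref{lem:basic_obstructions_for_p_elts} applied to noncommuting conjugates of $x$ acting on distinct $q$-elements — would rule out $|Q| > q$. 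Concretely, if $Q$ had order greater than $q$, one could find a second independent $q$-element outside $\langle w \rangle$ and produce an induced path on four vertices among $p$-elements and $q$-elements.

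Finally, once $Q \cong C_q$ is established, I would argue the action of $C_p$ on $C_q$ is trivial: a nontrivial action would require $p \mid q-1$, but the existence of the order-$pq$ element $xw$ means $x$ centralizes $w$, forcing the action to be trivial on the unique subgroup of order $q$ and hence on all of $Q \cong C_q$. This yields $G \cong C_q \times C_p$, as claimed. I expect the bookkeeping in the $P_4$-construction step to be the technically delicate part, since one must carefully verify that the four chosen vertices are pairwise adjacent or non-adjacent exactly as required for an induced $P_4$, but conceptually it mirrors the arguments already used in the earlier lemmas.
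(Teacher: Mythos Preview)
Your setup is fine, and the case $p<q$ does indeed close immediately via Lemma~\ref{lem:sylows_nonisolated_elements}. The genuine gap is in the case $q<p$: there you only obtain that $Q$ is $C_q\times C_{q^n}$ or has cyclic centre, and your proposed way to finish --- ``a $P_4$-avoidance argument with noncommuting conjugates of $x$'' --- is not an argument. All $p$-elements of $G$ generate conjugates of the same $C_p$, so there are no genuinely new $p$-elements to build a path from; and simply exhibiting a $q$-element outside $\langle w\rangle$ is exactly what Lemma~\ref{lem:basic_obstructions_for_p_elts} already exploits, which only controls $C_Q(x)$, not $Q$ itself. The sentence ``the presence of an order-$pq$ element forces $C_p$ to act nontrivially on $Q$ unless the action is trivial'' is a tautology and does no work.

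The paper's proof avoids the case split and Lemma~\ref{lem:sylows_nonisolated_elements} entirely. The decisive idea is to prove $C_Q(w)=\langle w\rangle$ directly, using the semidirect structure in a sharper way than you propose. Suppose $y\in C_Q(w)\setminus\langle w\rangle$. Then $yx\in C_G(w)$, and since $yx$ projects to $x$ in $G/Q$ its order is divisible by $p$. By Lemma~\ref{lem:basic_obstructions_for_p_elts} applied to $w$, every $p$-element of $C_G(w)$ lies in $\langle x\rangle$; hence an appropriate power of $yx$ equals $x$, so $yx\in C_G(x)$ and thus $y\in C_G(x)$. But Lemma~\ref{lem:basic_obstructions_for_p_elts} applied to $x$ says the only $q$-elements in $C_G(x)$ lie in $\langle w\rangle$, a contradiction. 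Now $Z(Q)\le C_Q(w)=\langle w\rangle$ and $Z(Q)\neq 1$ force $w\in Z(Q)$, whence $Q\le C_Q(w)=\langle w\rangle$. (Alternatively, in your framework you could note that when $q<p$ Lemma~\ref{lem:centralizers_nonisolated_elements}(2) makes $\langle x\rangle$ normal in $C_G(w)$, while $C_Q(w)$ is normal because $Q\trianglelefteq G$; so $C_G(w)=\langle x\rangle\times C_Q(w)$, giving $C_Q(w)\le C_G(x)=\langle w,x\rangle$ and hence $C_Q(w)=\langle w\rangle$. This is the missing bridge you were looking for.)
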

\begin{proof}
	The Sylow $p$-subgroups of $G$ are isomorphic to $C_p$ and in particular, two $p$-elements
	in $G$ always generate conjugate cyclic subgroups. Let $x$ be an element of $G$ of order $p$. Since $\pi(G)$ is non-trivial, there must be some element $w\in Q$ of order $q$ with $w\in C_G(x)$. Assume there is some
	$q$-element $y\in C_G(w)\setminus\langle w\rangle$. Then $yx\in C_G(w)$ and due to the semidirect structure, $p$ divides $|yx|$. But up to powers, $x$ is the only $p$-element in $C_G(w)$, so $(yx)^d=x$ for an appropriate integer $d$. In particular it follows that
	$yx\in C_G(x)$ holds. On the other hand, by Lemma~\ref{lem:basic_obstructions_for_p_elts}, $w$ is the only $q$-element in $C_G(x)$, so
	$x$ does not commute with $y\in G\setminus\langle w\rangle$ and thus, $x$ does not commute
	with $yx$, a contradiction.	

	Hence, $C_G(w)=\langle w\rangle\times\langle x\rangle$ and, as before, this implies $Q=\langle w\rangle$ due to $Z(Q)\neq 1$.
\end{proof}

\begin{lem}\label{lem:pi_star}
	Let $G$ be a group and $Pow(G)$ a cograph. Consider a connected component $K$
	of $\pi(G)$. Then $K$ is a star with center $q$, where $q$ is the smallest prime in $K$. 
\end{lem}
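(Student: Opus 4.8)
The plan is to extract, from Lemmas~\ref{lem:centralizers_nonisolated_elements} and~\ref{lem:sylows_nonisolated_elements}, a single structural engine and then feed it into a short connectivity argument. The engine is the following claim: \emph{if a prime $p$ has a neighbour $r<p$ in $\pi(G)$, then $r$ is the unique neighbour of $p$.} To prove it, I would pick an element $x$ of order $p$ and an element $w$ of order $r$ witnessing the edge $\{p,r\}$. Lemma~\ref{lem:centralizers_nonisolated_elements}(1) gives $C_G(x)\cong C_p\times C_r$, while Lemma~\ref{lem:sylows_nonisolated_elements} forces the Sylow $p$-subgroup to be $C_p$. Hence every element of order $p$ generates a subgroup conjugate to $\langle x\rangle$, so its centralizer is conjugate to $C_p\times C_r$; in particular the only primes dividing $|C_G(y)|$ for any order-$p$ element $y$ are $p$ and $r$. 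Since an edge $\{p,t\}$ requires an order-$p$ element commuting with an order-$t$ element, we get $t=r$, proving the claim.

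From the engine I would deduce an auxiliary observation: if a prime $s$ has no neighbour smaller than itself, then $s$ is the smallest prime in its connected component. Indeed, every neighbour $t$ of such an $s$ satisfies $t>s$, and, applying the engine to $t$ (which then has the smaller neighbour $s$), the unique neighbour of $t$ is $s$. Thus the component of $s$ equals $\{s\}\cup N(s)$ and consists entirely of primes $\geq s$.

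With these two facts the lemma follows quickly. The $|K|=1$ case is immediate, so I would assume $|K|\geq 2$ and set $q=\min K$. For any $p\in K$ with $p\neq q$, the observation (in contrapositive form) yields a neighbour $r<p$, and the engine makes $r$ the unique neighbour of $p$. It remains to identify $r$ with $q$: if $r\neq q$ then $r$ is not minimal in $K$, so by the observation $r$ has a smaller neighbour $r'$, whence by the engine the unique neighbour of $r$ is $r'$; but $r$ is also adjacent to $p$ with $p>r>r'$, a contradiction. Hence $r=q$, so every vertex of $K$ other than $q$ is adjacent to $q$ alone, which is exactly the assertion that $K$ is a star centred at $q=\min K$.

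The main obstacle is the engine, and specifically the step that upgrades ``$C_G(x)\cong C_p\times C_r$ for one witness $x$'' to ``$p$ has no neighbour other than $r$''. This relies on the conjugacy of all order-$p$ elements, which is exactly what the cyclic Sylow $p$-subgroup from Lemma~\ref{lem:sylows_nonisolated_elements} supplies; without it one could imagine several classes of order-$p$ elements attached to different primes. It is worth flagging the asymmetry that drives the star shape: the larger endpoint of any edge is pinned to a single neighbour, whereas the smaller endpoint may have many larger neighbours and so becomes the centre.
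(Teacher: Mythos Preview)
Your proof is correct and uses exactly the same engine as the paper: from Lemmas~\ref{lem:centralizers_nonisolated_elements} and~\ref{lem:sylows_nonisolated_elements} one deduces that the larger endpoint of any edge in $\pi(G)$ has degree~$1$, and then connectivity forces the star shape. The paper compresses the second half into the single phrase ``the lemma follows by repeating the argument for all primes in $K\setminus\{q\}$'', leaving the reader to observe that a connected graph in which every non-minimal vertex has degree~$1$ must be a star centred at the minimum; you make this explicit via your auxiliary observation and the $r=q$ step. So the approaches coincide, with your version spelling out the connectivity wrap-up more carefully.
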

\begin{proof}
	Let $q<p$ be primes that form an edge in $\pi(G)$. Then $C_G(x)\cong C_p\times C_q$
	for some element $x$ of order $p$ and furthermore, $\langle x\rangle$ is a Sylow $p$-subgroup of $G$. Since Sylow $p$-subgroups are all conjugate in $G$, it follows that
	all primes adjacent to $p$ in $\pi(G)$ already appear in $|C_G(x)|$ and then the lemma follows by repeating the argument for all primes in $K\setminus\{q\}$.
\end{proof}

We can actually give a full characterization of power-cograph groups in terms of centralizer properties. We want to point out that \cite[Theorem 3.1]{cameron2021finite} contains a characterization of possible induced $P_4$'s in a finite group in terms of element orders and this is essentially the same argument we repeat in the following proof. We combine this with our restrictions on centralizers to turn it into a characterization in terms of centralizer structure.

\begin{lem}\label{lem:characterization_via_centralizers}
	$G$ is a power-cograph group if and only if for each prime $p$ and for each element $g\in G$ of order $p$ the following hold:
	\begin{enumerate}
		\item if a prime $q<p$ divides $|C_G(g)|$, then $C_G(g)\cong C_q\times C_p$.
		\item if a prime $q>p$ divides $|C_G(g)|$, then there exists a normal subgroup $N\trianglelefteq C_G(g)$, $N\cong C_q$, and $C_G(g)/N\cong\langle g\rangle\times C_{p^n}$ with $n\in\mathbb{N}_0$. 
	\end{enumerate}
	In particular, if $C_G(g)$ is not a $p$-group, then $g$ is not a $p$-th power in $G$.
\end{lem}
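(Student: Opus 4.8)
The plan is to prove both directions of the equivalence, relying on the centralizer lemmas already established for the forward direction and on a careful case analysis of a hypothetical induced $P_4$ for the reverse direction. The forward direction is essentially a repackaging of the lemmas proved above. Assume $\pow(G)$ is a cograph and fix an element $g$ of prime order $p$. If a prime $q<p$ divides $|C_G(g)|$, then by Lemma~\ref{lem:centralizers_nonisolated_elements}(1) (applied with the roles of $p$ and $q$ matching $q<p$) we get $C_G(g)\cong C_q\times C_p$, giving condition~1. If a prime $q>p$ divides $|C_G(g)|$, then Lemma~\ref{lem:centralizers_nonisolated_elements}(2), with $g$ playing the role of the small-prime element $w$, yields $C_G(g)=\langle x\rangle\rtimes Q$ with $\langle x\rangle\cong C_q$ normal and $Q\cong\langle g\rangle\times C_{p^n}$; this is exactly the structure demanded in condition~2, taking $N=\langle x\rangle$. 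The final ``in particular'' clause is immediate from the second sentence of Lemma~\ref{lem:basic_obstructions_for_p_elts}: if $C_G(g)$ is not a $p$-group it has an element of some prime order $q\neq p$ commuting with $g$, and then $g$ is not a $p$-th power.

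For the reverse direction I would argue by contraposition: suppose $\pow(G)$ is not a cograph, so there is an induced path $a \longleftrightarrow b \longleftrightarrow c \longleftrightarrow d$ on four vertices, and I want to produce an element $g$ of prime order violating condition~1 or condition~2. The key reduction is that in a power graph, adjacency means one element is a power of the other, so each edge of the $P_4$ forces a divisibility relation between element orders; following the argument of \cite[Theorem 3.1]{cameron2021finite}, one can extract from any induced $P_4$ a configuration of two commuting elements whose orders involve two distinct primes, together with a third element witnessing either an extra commuting prime-order element or a proper root. First I would replace each vertex by a generator of its cyclic subgroup (legitimate since generators of the same cyclic group are twins, by the first lemma of Section~\ref{section:po-co groups}), and then analyze which of the endpoints is a power of its neighbor. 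Passing to suitable prime-order powers of the relevant vertices, I expect to land on an element $g$ of prime order $p$ such that $C_G(g)$ contains a configuration incompatible with the prescribed structure -- either a second subgroup of order $q$ (when $q<p$, contradicting $C_G(g)\cong C_q\times C_p$), or an element of order $q^2$ or a nontrivial root of $g$ (when $q>p$, contradicting the quotient structure $C_G(g)/N\cong\langle g\rangle\times C_{p^n}$).

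The main obstacle will be the bookkeeping in this reverse direction: an induced $P_4$ in a power graph admits several shapes depending on the orientation of the ``power-of'' relations along the path (which vertex is the middle that is a power of both its neighbors, and whether the outer vertices are powers of or have powers among the inner ones), and for each shape I must identify the correct prime $p$ and the correct witnessing element, then verify that the centralizer conditions genuinely fail. The cleanest route is probably to show the contrapositive of each direction separately rather than tracking an exact correspondence of configurations: for the conditions-imply-cograph direction, assume the centralizer conditions and suppose toward a contradiction that some induced $P_4$ exists, then read off two commuting elements of distinct prime orders $p,q$ sitting inside a single centralizer, and check that conditions~1 and~2 leave no room for the fourth vertex of the path. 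Here the cyclic-group structure forced by the two conditions is rigid enough that the potential ``branch'' creating the $P_4$ cannot occur, which is precisely the obstruction exploited in Lemmas~\ref{lem:basic_obstructions_for_p_elts} and~\ref{lem:centralizers_nonisolated_elements}. I would expect the verification to reduce, in each case, to noting that a $C_q\times C_p$ or the constrained semidirect product has a power graph with no induced $P_4$, which is a short direct check.
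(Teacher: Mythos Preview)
Your approach is essentially the same as the paper's: the forward direction invokes Lemma~\ref{lem:centralizers_nonisolated_elements} and Lemma~\ref{lem:basic_obstructions_for_p_elts} exactly as you describe, and the reverse direction proceeds by contraposition on a hypothetical induced $P_4$ $(g_1,g_2,g_3,g_4)$. The paper sidesteps the orientation bookkeeping you worry about via a single normalization---assume $|g_2|>|g_3|$, so $g_3\in\langle g_2\rangle$ and (since $g_1,g_3$ are non-adjacent) $g_2\in\langle g_1\rangle$---which forces $|g_2|$ to be a product $pq$ of two distinct primes (a prime-power $|g_2|$ would already violate the conditions via the ``not a $p$-th power'' clause), after which $g_3$ has prime order and the only two possibilities for $|g_4|$ are dispatched in one line each.
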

\begin{proof}
	If $G$ is a power-cograph group then centralizers of prime order elements behave as claimed according to Lemma \ref{lem:centralizers_nonisolated_elements}. For the other direction, assume that $\pow(G)$ is not a cograph. Then there exists an induced $P_4$ in $\pow(G)$, given by $(g_1,g_2,g_3,g_4)$, say. Note that any two consecutive vertices cannot be of the same order, since elements generating the same cyclic group form a twin class in $\pow(G)$. So without loss of generality we may assume that $|g_2|>|g_3|$ holds. If $|g_2|$ is a power of some prime $q$, then either Condition 2 is violated or all of $g_1,\dots{},g_3$ are $q$-elements in $\langle g_2\rangle$, contradicting the assumption that the induced subgraph is a path. Thus, we may further assume that $|g_2|=pq$ with primes $p\neq q$. Then $g_1$ and $g_3$ need to be of order $p$ and $q$, respectively. In particular, $|g_4|>|g_3|$ holds. Say $|g_3|=q$. If $|g_4|=pq$, then $g_3$ commutes with two different elements of order $p$, contradicting our assumptions in any case. Otherwise $|g_4|$ is a power of $q$, but then $g$ is a $q$-th power, again contradicting our assumptions.
\end{proof}

\section{Solvable groups}\label{section:solvable}

We focus on the case of solvable power-cograph groups first. Combining Lucido's Three Prime Lemma with our observations on centralizers in power-cograph-groups yields effective restrictions on the prime graph.
\begin{lem}\label{lem:form_of_solvable_pi}
	If $G$ is a solvable power-cograph group, then $\pi(G)$ is an induced subgraph of
	either $2K_2$, the graph consisting of two disjoint edges, or $P_3$, a path on 3 vertices.
\end{lem}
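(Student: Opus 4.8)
The plan is to combine the two structural constraints we already have on the prime graph of a solvable power-cograph group: a connectivity constraint from Lucido's Three Prime Lemma and a shape constraint from Lemma~\ref{lem:pi_star}. First I would reformulate Lucido's Three Prime Lemma graph-theoretically: since among any three primes dividing $|G|$ two are joined by an edge, the graph $\pi(G)$ contains no independent set (coclique) of size three; equivalently, its complement is triangle-free. Second, I would invoke Lemma~\ref{lem:pi_star}, which asserts that every connected component of $\pi(G)$ is a star (with center the smallest prime of that component). These two facts together pin down the graph very tightly.

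Next I would run a short case analysis on the number $m$ of connected components of $\pi(G)$. Observe first that $m\leq 2$: if there were three distinct components, picking one vertex from each would yield an independent set of size three, contradicting the reformulated Lucido lemma. If $m=1$, then $\pi(G)$ is a single star $K_{1,n}$; its $n$ leaves form an independent set, so $n\leq 2$, leaving only $K_1$, $K_2$ or $P_3$, each an induced subgraph of $P_3$. If $m=2$, I would argue that each component has at most two vertices: a star on at least three vertices has two leaves, and together with any vertex of the other (nonempty) component these form an independent set of size three. Hence each component is $K_1$ or $K_2$, so $\pi(G)$ is one of $2K_1$, $K_1\sqcup K_2$ or $2K_2$, all of which are induced subgraphs of $2K_2$.

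I do not expect a serious obstacle here, since the two cited results do essentially all the work. The only care needed is the bookkeeping of degenerate components (isolated vertices and single edges) and the routine verification that the enumerated graphs are indeed induced subgraphs of $2K_2$ or $P_3$. The conceptual crux, already established by the earlier lemmas, is that the star condition combined with the absence of an independent triple simultaneously forces both the number of components and the number of leaves per component to be small, which is exactly what collapses $\pi(G)$ onto the two claimed templates.
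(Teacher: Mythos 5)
Your proposal is correct and follows essentially the same route as the paper: both arguments combine Lucido's Three Prime Lemma (no coclique of size three in $\pi(G)$) with Lemma~\ref{lem:pi_star} (components are stars) and then split into the connected and two-component cases. Your write-up merely spells out the degenerate subcases ($K_1$, $K_2$, $2K_1$, $K_1\sqcup K_2$) a bit more explicitly than the paper does.
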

\begin{proof}
	By Lucido's Three Prime Lemma, $\pi(G)$ has at most two connected components and if it does admit two components, then they can have at most two vertices each, since they are star graphs by Lemma \ref{lem:pi_star} and there cannot be an induced coclique of size 3 by Lucido's Three Prime Lemma.
	
	If $\pi(G)$ is connected, then Lemma \ref{lem:pi_star} implies that $\pi(G)$ is a star with some central vertex $p$ and $\pi(G)-\{p\}$ can not have more than two vertices.
\end{proof}

\begin{lem}\label{lem:classification_connected_pi}
	Let $G$ be solvable and assume $\pow(G)$ is a cograph. If $\pi(G)$ is connected then
	one of the following holds:
	\begin{enumerate}
		\item $G$ is a group of prime power order.
		\item $\pi(G)$ consists of a single edge $\{q,p\}$ with $q<p$.
		Then $G\cong (C_p\rtimes C_{q^n})\times C_q$, and if $n>0$, then $q^n$ divides $p-1$ and $C_{q^n}$ acts fixed-point-freely on the $C_p$-part of $\fit(G)$.
\end{enumerate}	 
	Conversely, all such groups are power-cograph groups.
\end{lem}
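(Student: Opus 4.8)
The plan is to read off the shape of $\pi(G)$ from Lemmas~\ref{lem:form_of_solvable_pi} and~\ref{lem:pi_star} and then use the rigid centralizer data of Lemmas~\ref{lem:centralizers_nonisolated_elements}, \ref{lem:sylows_nonisolated_elements} and~\ref{lem:characterization_via_centralizers}. Since $\pi(G)$ is connected, Lemma~\ref{lem:pi_star} makes it a star centred at the smallest prime $q$, and Lemma~\ref{lem:form_of_solvable_pi} leaves only three shapes: a single vertex, a single edge, or a path $P_3$ on three vertices. A single vertex means $|G|$ is a prime power (Case~1). Everything else rests on one statement, which I expect to be the main obstacle: \emph{if $\{q,p\}$ is an edge of $\pi(G)$ with $q<p$, then the Sylow $p$-subgroup of $G$ is normal.} The essential number-theoretic leverage will be the trivial inequality $p>q$, i.e.\ $p\nmid q-1$.

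To prove normality in the single-edge case (so $|G|=pq^{m}$) I would proceed as follows. Fix $x$ of order $p$ with $C_G(x)=\langle x\rangle\times\langle w\rangle$, $|w|=q$ (Lemma~\ref{lem:centralizers_nonisolated_elements}). The coset action on the index-$p$ subgroup $S$ (a Sylow $q$-subgroup) embeds $G/\mathcal{O}_q(G)$ as a transitive solvable group of prime degree $p$, hence into $\mathrm{AGL}(1,p)$, so $G/\mathcal{O}_q(G)\cong C_p\rtimes C_{q^{a}}$ is Frobenius with kernel generated by the image of $x$; as $w$ commutes with $x$ and is a $q$-element, $w$ lands in the kernel only trivially, giving $w\in\mathcal{O}_q(G)$ and $C_{\mathcal{O}_q(G)}(x)=\langle w\rangle$. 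Now the decisive point: if $w\notin Z(\mathcal{O}_q(G))$ then $\mathcal{O}_q(G)$ is non-abelian, so its centre is cyclic by Lemma~\ref{lem:sylows_nonisolated_elements}, and since $\langle w\rangle\cap Z(\mathcal{O}_q(G))=1$ the element $x$ acts fixed-point-freely on the cyclic $q$-group $Z(\mathcal{O}_q(G))$, forcing $p\mid q-1$, contrary to $p>q$. Hence $w\in Z(\mathcal{O}_q(G))$, and as $w$ commutes with $x$ it is central in $M:=\mathcal{O}_q(G)\rtimes\langle x\rangle$. If $\langle x\rangle$ were not normal in $M$, the central element $w$ would commute with two distinct conjugates $\langle x\rangle,\langle x\rangle^{g}$; feeding these into the $P_4$-pattern of Lemma~\ref{lem:basic_obstructions_for_p_elts} produces an induced $P_4$, a contradiction. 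A short count ($n_p(M)=|\mathcal{O}_q(G)|/q$) then forces $\mathcal{O}_q(G)=\langle w\rangle\cong C_q$, whence $G/C_G(w)\hookrightarrow\aut(C_q)\cong C_{q-1}$ has order coprime to $p$ and $q$, so $w\in Z(G)$ and finally $\langle x\rangle\trianglelefteq C_G(w)=G$ by Lemma~\ref{lem:centralizers_nonisolated_elements}.

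With normality available the rest is routine. If $\pi(G)$ were $P_3$ with leaves $p_1,p_2$, then (localizing the above to the Hall $\{q,p_i\}$-subgroups and a transfer count) each leaf prime would have a normal Sylow subgroup; but two normal subgroups of coprime orders $p_1,p_2$ centralise each other and yield an element of order $p_1p_2$, contradicting that $\{p_1,p_2\}$ is a non-edge. (Equivalently, at most one leaf Sylow can be normal, and the remaining leaf is ruled out by the same $p>q$ argument.) Hence $\pi(G)$ is the single edge $\{q,p\}$, and Schur--Zassenhaus gives $G=C_p\rtimes S$ with $S$ a Sylow $q$-subgroup. The kernel $C_S(x)=\langle w\rangle$ of $S\to\aut(C_p)$ is normal in $S$ and, since $\aut(C_q)$ has order prime to $q$, is centralised by the $q$-group $S$; thus $\langle w\rangle\le Z(S)$, and as $S/\langle w\rangle$ embeds in the cyclic $\aut(C_p)$ we get that $S$ is abelian. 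Because $w$ is not a $q$-th power (Lemma~\ref{lem:basic_obstructions_for_p_elts}), $S$ is not cyclic, so $S\cong C_q\times C_{q^{n}}$ with $\langle w\rangle$ a direct factor acting trivially and the $C_{q^{n}}$-part acting fixed-point-freely; this yields $q^{n}\mid p-1$ and $G\cong(C_p\rtimes C_{q^{n}})\times C_q$.

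For the converse I would take $G\cong(C_p\rtimes C_{q^{n}})\times C_q$ (with $n=0$ giving $C_{pq}$) where the $C_{q^{n}}$-part acts fixed-point-freely, and verify the criterion of Lemma~\ref{lem:characterization_via_centralizers} by inspecting centralizers of prime-order elements: an element of order $p$ has centralizer $C_p\times C_q$ (Condition~1); the generator of the central direct factor $C_q$ has centralizer $G$, where the normal $C_p$ satisfies $G/C_p\cong C_q\times C_{q^{n}}$ and the generator is not a $q$-th power (Condition~2); and every remaining element of order $q$ acts fixed-point-freely on $C_p$, so its centralizer is the abelian Sylow $q$-subgroup $C_q\times C_{q^{n}}$, a $q$-group for which both conditions hold vacuously. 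Hence $\pow(G)$ is a cograph, completing the equivalence.
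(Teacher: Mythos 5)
Your treatment of the single-edge case is correct, and it takes a genuinely different route from the paper: where the paper analyzes $\fit(G)$ and $\fit_2(G)$ (ruling out that $\fit(G)$ is a $p$-group or a $q$-group via Lemma~\ref{lem:semidirect_nontrivial_prime_graph}, then concluding $\fit(G)\cong C_p\times C_q$), you work through the $q$-core: Galois's theorem on solvable transitive groups of prime degree puts $w$ into $\mathcal{O}_q(G)$, then into $Z(\mathcal{O}_q(G))$, then $\mathcal{O}_q(G)=\langle w\rangle$, $w\in Z(G)$, and finally $\langle x\rangle\trianglelefteq G$ by Lemma~\ref{lem:centralizers_nonisolated_elements}. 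I verified these steps; two small repairs are needed. First, Lemma~\ref{lem:sylows_nonisolated_elements} is a statement about Sylow $q$-subgroups of a power-cograph group in which $\{p,q\}$ is an edge, so to get ``cyclic centre'' for $\mathcal{O}_q(G)$ you must apply it inside the subgroup $M=\mathcal{O}_q(G)\rtimes\langle x\rangle$, where $\mathcal{O}_q(G)$ \emph{is} a Sylow $q$-subgroup; it says nothing directly about the $q$-core of $G$. Second, ``$S$ is not cyclic'' should be ``not cyclic of order greater than $q$'', to allow $S=\langle w\rangle$ and $G\cong C_p\times C_q$. The converse verification via Lemma~\ref{lem:characterization_via_centralizers} is fine.

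The genuine gap is the elimination of the $P_3$ case, which the lemma must rule out and which is exactly where the paper's Fitting-subgroup machinery does its work (there $\fit(G)\cong C_q\times C_p$ forces elements of order $r$ to act fixed-point-freely on the normal $C_q$, whence $r\mid q-1$, contradicting minimality of $q$). Your sketch claims that ``localizing the above to the Hall $\{q,p_i\}$-subgroups and a transfer count'' makes \emph{both} leaf Sylow subgroups normal in $G$. This does not hold up. Localizing your single-edge argument to a Hall $\{q,p_i\}$-subgroup $H_i$ only yields normality of the Sylow $p_i$-subgroup \emph{in $H_i$}, not in $G$: your $\mathrm{AGL}(1,p)$ step needs the Sylow $q$-subgroup to have prime index, which fails in $G$ itself since $|G|=q^mp_1p_2$. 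Sylow counting does promote normality to $G$ for the \emph{larger} leaf (from $N_G(P_1)\supseteq H_1$ one gets $n_{p_1}\in\{1,p_2\}$, and $n_{p_1}\equiv 1 \pmod{p_1}$ forces $n_{p_1}=1$ when $p_1>p_2$), but for the smaller leaf the same count allows $n_{p_2}=p_1$ whenever $p_1\equiv 1\pmod{p_2}$, and transfer arguments produce normal $p$-\emph{complements}, not normal Sylow $p$-subgroups, so ``a transfer count'' cannot deliver the claim. The actual contradiction has to come from the action of the leaf primes on a non-trivial normal $q$-subgroup, exploiting $p_i>q$: for instance, show $\mathcal{O}_q(G)\neq 1$ and $\mathcal{O}_q(G)\leq C_G(P_1)$; if $w_2\in\mathcal{O}_q(G)$ then $C_G(w_2)$ is divisible by three primes, contradicting Lemma~\ref{lem:centralizers_nonisolated_elements}, while if $w_2\notin\mathcal{O}_q(G)$ then $x_2$ acts fixed-point-freely on $\mathcal{O}_q(G)$, and a rank analysis of $\Omega_1(\mathcal{O}_q(G))$ inside $S\cong C_q\times C_{q^n}$ forces either $w_2\in\mathcal{O}_q(G)$ after all or $p_2\mid q-1$, both contradictions. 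None of this (nor any substitute for it) appears in your proposal, so the $P_3$ case is unproven and the lemma's dichotomy does not yet follow.
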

\begin{proof}
	By the previous lemma, $\pi(G)$ is a star with center, say, $q$. Also all other primes dividing $|G|$ are larger than $q$ and divide $|G|$ exactly once. Assume from now on that $G$ is not a $q$-group.
	
	\paragraph{Case 1: $\pi(G)=\{\{q,p\}\}$.} 	
	Set $F:=\fit(G)$. Define $F_2\leq G$ implicitly via $F_2/F:=\fit(G/F)$. If $F$ is a $p$-group then $F\cong C_p$ is the unique subgroup of order $p$ in $G$ and self-centralizing, contradicting the fact that $\pi(G)$ is connected. Thus $q$ divides $|F|$. Moreover the Fitting-group of a finite group is the direct product over all maximal normal prime power subgroups. Thus, if we assume $F$ to be a $q$-group, we obtain that $\gcd(|F|,|F_2|)=1$. By the Schur-Zassenhaus Theorem this implies that $F_2\cong F\rtimes C_p$ is a semidirect product. Since $G$ is solvable, it holds that $C_{G/F}(F_2/F)\leq F_2/F$ and then for $\pi(G)$ to be non-trivial, $\pi(F_2)$ must be non-trivial. But then $F_2\cong C_p\times C_q$ by Lemma \ref{lem:semidirect_nontrivial_prime_graph}, a contradiction.
	
	In conclusion, it must hold that $F$ is a nilpotent group whose order is divisible by $pq$, so $F\cong C_q\times C_p$. Then it holds that
	$G\lesssim (C_p\rtimes Q)\times C_q$, where $Q\leq \aut(C_p)$ is a $q$-subgroup
	of the cyclic group $\aut(C_p)$ acting fixed-point-freely on $C_p$. The fact that $F$ has a complement can be seen as follows. Let $w\in F$ be an element of order $q$. Then $w$ is centralized by each Sylow $q$-subgroup of $G$ for $q$ does not divide $|\aut(\langle w\rangle)|$. By Lemma \ref{lem:centralizers_nonisolated_elements}, the Sylow $q$-subgroups of $C_G(w)$ are isomorphic to $\langle w\rangle\times C_{q^n}$ for some $n$.

	\paragraph{Case 2: $\pi(G)=\{\{q,p\},\{q,r\}\}$.} As before, $F$ cannot be of order $p$ or $r$, since then the respective prime would have to be isolated in $\pi(G)$ and $F$ cannot be a $q$-group for the exact same reason we gave in Case 1 (note that the Fitting series of $G$ does not contain a quotient of order $pr$). Then we have $F\cong C_q\times C_p$ (or $F\cong C_q\times C_r$ which is just a matter of renaming) and $G/F\leq\aut(C_p\times C_q)$. Now there is a unique subgroup of order $q$ in $F$ and thus elements of order $r$ in $G$ have to act fixed-point-freely on this subgroup (otherwise its centralizer would be divisible by $q$, $p$ and $r$). But then $r$ must divide $q-1$, contradicting the fact that $q$ is the minimal prime in its component of $\pi(G)$. Thus, this case does in fact not occur.

	\paragraph{The converse statement:} Recall that groups of prime power order are always power-cograph groups by Theorem~\ref{thm:nilpotent_poco}. In the second case, the cograph property can be deduced from Lemma~\ref{lem:characterization_via_centralizers}.
\end{proof}

In particular, if $G$ is a solvable power-cograph group and $\fit(G)$ is of prime power order, then $\pi(G)$ is disconnected. Here we should point out that in \cite[Example 4.6.]{cameron2021finite} the authors claim that the non-trivial semidirect product $H_3\rtimes C_2$ is an example of a solvable power-cograph group whose socle is a $3$-group (here, $H_3$ denotes the Heisenberg group of order 27). Since its prime graph is connected, our classification actually shows that the group is not a power-cograph group. Since the group contains centralizers isomorphic to $C_3\times S_3$, the example already disagrees with Lemma \ref{lem:basic_obstructions_for_p_elts}.

\begin{lem}\label{lem: classification_disconnected_pi}
	Let $G$ be solvable and assume $\pow(G)$ is a cograph. If $\pi(G)$ is disconnected, then
	one of the following holds:
	\begin{enumerate}
		\item the graph $\pi(G)$ is the empty graph on two vertices. Here $\fit(G)$ 
		is a $p$-group and $G/\fit(G)$ is a generalized quaternion $2$-group or a subgroup of $C_{q^k}\rtimes C_{p^m}$ for a prime $q\neq p$ and if $m>0$ then $C_{p^m}$ acts fixed-point-freely on $C_{q^k}$. In any case, $\fit_2(G)/\fit(G)$ acts fixed-point-freely on $\fit(G)$.
		\item the graph  $\pi(G)$ consists of an edge $\{p,q\}$ together with a third, isolated vertex $r$. Then $G$ is in one of two cases: 
		\begin{enumerate}
			\item $G=C_{pq}\rtimes C_{r^n}$ with $n \geq 1$ and $C_{r^n}$ acts fixed-point-freely on $C_{pq}$ (in particular, $r^n$ divides $q-1$ and $p-1$).
			\item $\fit(G)$ is an $r$-group and $\fit_2(G)\cong C_{pq}$ acts fixed-point-freely on $\fit(G)$. Moreover, $G/ \fit_2(G)$ is either trivial or $C_{r^n}$ acting as a fixed-point-free subgroup of $\aut(C_{pq})$.
		\end{enumerate}
		\item the graph  $\pi(G)$ consists of two disjoint edges $\{p,q\}$ and $\{r,s\}$ where $pq$ divides $r-1$ and $s-1$. Then $G$ is of the form $C_{rs}\rtimes C_{pq}$ with $C_{pq}$ acting fixed-point-freely on $C_{rs}$. 
	\end{enumerate}

		Conversely, all groups described above are power-cograph groups.
\end{lem}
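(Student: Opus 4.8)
The plan is to reduce the whole classification to the structure theory of Frobenius and $2$-Frobenius groups and then to read off the precise shape of $G$ from the rigid behaviour of centralizers in power-cograph groups. Since $G$ is solvable with disconnected prime graph, Lemma~\ref{lem:disconnected_pi} leaves only three possibilities, and the third (an extension involving a nonabelian simple group) cannot occur for solvable $G$; hence $G$ is either Frobenius or $2$-Frobenius. In both cases $\fit(G)$ sits at the bottom of the Fitting series, $\fit_2(G)/\fit(G)$ is nilpotent, and there are no edges of $\pi(G)$ joining the prime set of one Fitting layer to another. Combining this with the fact that, by Lemmas~\ref{lem:pi_star} and~\ref{lem:form_of_solvable_pi}, every component of the disconnected graph $\pi(G)$ is a star on one or two vertices, the heart of the argument is to match the components of $\pi(G)$ to the layers $\pi(\fit(G))$, $\pi(\fit_2(G)/\fit(G))$ and $\pi(G/\fit_2(G))$; the three shapes of $\pi(G)$ then produce the three cases of the statement.

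Two cograph tools do the fine work, and I would isolate them first. For an edge $\{q,p\}$ of $\pi(G)$ with $q<p$, Lemma~\ref{lem:centralizers_nonisolated_elements} pins the centralizer of a $p$-element to $C_p\times C_q$ and, via Lemma~\ref{lem:sylows_nonisolated_elements}, forces the Sylow $p$-subgroup to be $C_p$; together with the final clause of Lemma~\ref{lem:basic_obstructions_for_p_elts} (a prime-order element commuting with an element of a different prime order is not a proper power of its own prime) this collapses every relevant nilpotent section supported on an edge to a cyclic group of squarefree order, which is how the factors $C_{pq}$ and $C_{rs}$ arise. The second tool is the observation that when a Fitting layer is cyclic (or elementary) its automorphism group is abelian, so any complement acting on it is abelian and, being a Frobenius complement, cyclic; this both yields the cyclic complements $C_{r^n}$ and $C_{pq}$ in the statement and, crucially, rules out many $2$-Frobenius configurations, because a fixed-point-free action of a top layer on such a middle layer is incompatible with the commutation relations forced by the abelian automorphism group.

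I would then argue according to the shape of $\pi(G)$. If $\pi(G)$ is empty on two vertices, both Fitting layers are single primes: the Frobenius case gives a $p$-group kernel with a $q$-group Frobenius complement (hence cyclic, or generalized quaternion when $q=2$), while the $2$-Frobenius case gives $G/\fit(G)\lesssim C_{q^k}\rtimes C_{p^m}$ with the stated fixed-point-free actions, and in all cases $\fit_2(G)/\fit(G)$ is a Frobenius complement acting without fixed points on $\fit(G)$. If $\pi(G)$ is an edge plus an isolated vertex $r$, the dichotomy is whether the edge primes lie in $\fit(G)$, giving the Frobenius group $C_{pq}\rtimes C_{r^n}$ of Case~2(a) (the complement is cyclic because $\aut(C_{pq})$ is abelian), or whether $\fit(G)$ is an $r$-group, giving Case~2(b) with $\fit_2(G)/\fit(G)\cong C_{pq}$ acting fixed-point-freely and $G/\fit_2(G)$ trivial in the Frobenius situation or $C_{r^n}$ in the $2$-Frobenius situation. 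Finally, if $\pi(G)$ is two disjoint edges, then in the Frobenius case a single prime cannot form the kernel's prime set by itself (that would split an edge between kernel and complement), so one whole edge is the kernel and the other the complement; the $2$-Frobenius alternative is excluded by the second tool above, leaving exactly the Frobenius group $C_{rs}\rtimes C_{pq}$, whose fixed-point-free action forces $pq\mid r-1$ and $pq\mid s-1$.

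The step I expect to be the main obstacle is precisely the elimination of the spurious $2$-Frobenius configurations in Cases~2 and~3: naively their prime graphs look admissible, and ruling them out requires combining the exact centralizer shapes of Lemma~\ref{lem:centralizers_nonisolated_elements} with the compatibility constraint between the action of the middle layer on $\fit(G)$ and the fixed-point-free action of the top layer on the middle one. For the converse I would appeal directly to Lemma~\ref{lem:characterization_via_centralizers}: in each listed group the fixed-point-free actions make the centralizer of every prime-order element a cyclic group of the required squarefree form (for instance a $p$-element of $C_{rs}\rtimes C_{pq}$ has centralizer $C_{pq}\cong C_p\times C_q$ and an $r$-element has centralizer $C_{rs}$), and no such element is a proper power, so conditions~1 and~2 of that lemma hold and $\pow(G)$ is a cograph.
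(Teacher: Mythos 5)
Your architecture (reduce to Frobenius/2-Frobenius via Lemma~\ref{lem:disconnected_pi}, then match components of $\pi(G)$ to Fitting layers) is reasonable and partly parallels the paper, which works directly with the Fitting series and invokes the Frobenius/2-Frobenius dichotomy only in the two-disjoint-edges case. But there is a genuine gap, and it sits exactly where you yourself predicted the difficulty. Your load-bearing structural claim --- ``there are no edges of $\pi(G)$ joining the prime set of one Fitting layer to another'' --- is false for $2$-Frobenius groups: edges between $\pi(\fit(G))$ and $\pi(G/\fit_2(G))$ do occur. A concrete counterexample is $G=\mathrm{A\Gamma L}_1(8)=C_2^3\rtimes(C_7\rtimes C_3)$, a $2$-Frobenius group with $F=\fit(G)=C_2^3$, $L=\mathrm{AGL}_1(8)$ and $G/L\cong C_3$: the field automorphism $x\mapsto x^2$ fixes $1\in\mathbb{F}_8$, so $G$ has an element of order $6$, and $\pi(G)$ is the edge $\{2,3\}$ plus the isolated vertex $7$ --- precisely the shape of your Case~2, with the edge joining the bottom layer to the top layer, and with $\fit(G)$ supported on a single edge prime. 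This group is of course not a power-cograph group, but that is the point: excluding it requires the cograph hypothesis, not Frobenius structure theory, so your ``matching'' step cannot be carried out as stated.

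Concretely, your Case~2 dichotomy (``either both edge primes lie in $\fit(G)$, or $\fit(G)$ is an $r$-group'') silently omits the branch where $\fit(G)$ is a $p$- or $q$-group and the edge is realized between bottom and top layers of a $2$-Frobenius group; and in Case~3 your exclusion of the $2$-Frobenius alternative rests on the ``second tool'' (abelian automorphism groups of cyclic layers), which does not produce a contradiction there: a cyclic top layer $C_{pq}$ acting fixed-point-freely on $L/F\cong C_{rs}$ over a $q$-group $F$ is perfectly consistent with abelian automorphism groups. You acknowledge these eliminations as ``the main obstacle'' but give only a recipe, not an argument. The paper closes exactly these cases with Lemma~\ref{lem:semidirect_nontrivial_prime_graph}: an element of order $pq$ together with the Fitting $q$-group generates a subgroup of the form $(q\text{-group})\rtimes C_p$ with non-null prime graph, which that lemma forces to be $C_q\times C_p$, collapsing $\fit(G)$; in Case~3 one additionally observes that this yields $C_G(\fit(G))\cong C_p\times C_q$ normal in $G$, contradicting $\fit(G)$ being a $q$-group. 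Without this step (or a substitute for it), your proof does not go through.
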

\begin{proof}
	\begin{enumerate}
	\item If $\pi(G)$ is the empty graph, then $\fit(G)$ has prime power order.
Moreover, $\fit_2(G)/\fit(G)$ is of prime power order coprime to $|\fit(G)|$.
 By~\cite[10.5.6]{robinson_1996}, the group $\fit_2(G)/\fit(G)$ is cyclic or a generalized quaternion $2$-group. In the first case, $G=\fit_2(G)$ or otherwise $G$ is a $2$-Frobenius group where $G/\fit_2(G)$ is a cyclic group of prime power order acting fixed-point-freely on $\fit_2(G)/\fit(G)$. In the second case, $\fit_2(G)/\fit(G)$ contains a unique element of order $2$ so it does not allow fixed-point-free extensions, i.e., $G=\fit_2(G)$. 

This leaves us with two more options for $\pi(G)$ according to Lemma \ref{lem:form_of_solvable_pi}.

	\item Case $\pi(G)=(\{p,q,r\},\{\{p,q\}\})$: If $\fit(G)\cong C_{pq}$ holds, then $G/\fit(G)$ acts as a fixed-point-free subgroup of $\aut(C_p)$ and $\aut(C_q)$ simultaneously, in
	particular, $G/\fit(G)$ is cyclic.
	
	Otherwise, $\fit(G)$ is either a $p$-group or a $q$-group. Without loss of generality, assume that $\fit(G)$ is a  $q$-group. Then there is an element of order $pq$ in $G$ and together with $\fit(G)$ it spans a semidirect product of a $q$-group with $C_p$, having non-empty prime graph. Thus, Lemma \ref{lem:semidirect_nontrivial_prime_graph} implies that $\fit(G)$ is trivial, a contradiction.
	
	We established that the only case left to consider is the case where $\fit(G)$ is an $r$-group. Then $\fit_2(G)/\fit(G)\cong C_p\times C_q$, otherwise we can repeat the argument from above, since $G/\fit(G)$ now has to contain elements of order $pq$. If it holds that $G=\fit_2(G)$, we are done. Otherwise $G/\fit_2(G)$ acts on $\fit_2(G)/\fit(G)\cong C_{pq}$ as a subgroup of $\aut(C_{pq})$, where the latter is an abelian group. Without loss of generality assume that $q<p$, so $p$ divides $|G|$ exactly once by Lemma~\ref{lem:sylows_nonisolated_elements}. If $G/\fit_2(G)$ contains elements of order $q$, then 
by Lemma~\ref{lem:centralizers_nonisolated_elements}, $G/\fit(G)$ contains a subgroup isomorphic to $C_q\times C_q$, acting on $\fit(G)$ without fixed-points. This is not possible, see~\cite[10.5.3]{robinson_1996}. Thus, $G/\fit_2(G)$ is an abelian $r$-group acting on $\fit_2(G)/\fit(G)$ as a fixed-point-subgroup of $\aut(C_p)$ and $\aut(C_q)$ simultaneously.

	\item {Case $\pi(G)=(\{p,q,r,s\},\{\{p,q\},\{r,s\}\})$:} If $\fit(G)\cong C_{rs}$ then $G/\fit(G)$ is abelian with at least two prime divisors $p$ and $q$, so 
	$G/\fit(G)\cong C_{pq}$ and this group must act on $\fit(G)$ without fixed points.
	Otherwise $\fit(G)$ must be of prime power order (since it is a nilpotent power-cograph group) and then $G$ cannot be a Frobenius-group, where $\pi(\fit(G))$ would form a connected component in $\pi(G)$. Thus, $G$ is $2$-Frobenius with corresponding normal subgroups $K\trianglelefteq N\trianglelefteq G$  (cf. Lemma~\ref{lem:disconnected_pi}). Then $\pi(N/K)$ forms a component in $\pi(G)$ which must be of size two by the assumptions on $\pi(G)$. Without loss of generality let $N/K\cong C_{rs}$ and let $K$ be a $q$-group. By definition, $G/K$ is a semidirect product with normal subgroup $N/K$ so we may choose some subgroup $H\leq G$ such that $K\leq H$ and $H/K$ is a complement of $N/K$ in $G/K$. If both $p$ and $q$ divide $|H/K|$, then $H/K\cong C_{pq}$, since $H/K$ acts on $N/K$ as an automorphism subgroup and $\aut(N/K)$ is abelian. In this case, $H\cong (K.C_q)\rtimes C_p$ with non-trivial prime graph, implying $K=\{1\}$ via Lemma~\ref{lem:semidirect_nontrivial_prime_graph}, a contradiction. Otherwise, $H/K$ is a $p$-group (because $p$ must divide $|H/K|$ at least once) and then $K\cong C_q$, again using Lemma~\ref{lem:semidirect_nontrivial_prime_graph} and the fact that some element of order $q$ must commute with an element of order $p$.
 But then $C_G(K)\cong C_p\times C_q$ is normal in $G$, contradicting the fact that $K=\fit(G)$ is a $q$-group.
\end{enumerate}
Finally, we point out that all groups appearing in the lemma's statement are indeed power-cograph groups according to Lemma~\ref{lem:characterization_via_centralizers}.
\end{proof}

While it would be even better to have a list of possible Fitting-subgroups in Cases 1) and 2b), the restrictions we give precisely describe the structure of $G$. It remains the question of which $p$-groups admit fixed-point-free automorphisms of prime order. For abelian Fitting-subgroups the question can be answered in full generality~\cite{mayr_1999}, so we obtain a classification under this extra assumption. For the general case, there is an extensive body of research concerned with bounding the Fitting-length of groups admitting fixed-point-free automorphisms of prime order, see for example~\cite{higman_1957, ward_1969}. In particular, they are always nilpotent by a result of Thompson~\cite{thompson_1959}, but no classification is available.

\section{Non-solvable groups}\label{section:non-solvable}
To deal with non-solvable groups we first analyze the socle of a power-cograph group to split up the classification task into smaller sub-problems.
Since a high number of direct factors in $\soc(G)$ implies the existence of relatively large element centralizers, the socle of a power-cograph turns out to be quite restricted.

\begin{lem}\label{lem: almost_simple_or_abelian_socle}
	Let $G$ be a non-solvable group whose power graph is a cograph. Set $S:=\soc(G)$. Then one of the following holds:
	\begin{enumerate}
		\item $S$ is a non-abelian simple group and $S\trianglelefteq G\leq Aut(S)$, so $G$ is almost simple.
		\item $S\cong C_p^n$ for some $n\in\mathbb{N}$ and $G/C\leq$GL$_n(\mathbb{F}_p)$, where
		$C:=C_G(S)$ is a $p$-group.
	\end{enumerate}
\end{lem}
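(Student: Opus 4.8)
The goal is to show that the socle $S=\soc(G)$ of a non-solvable power-cograph group is either a single non-abelian simple group (yielding an almost simple $G$) or an elementary abelian $p$-group $C_p^n$ with centralizer $C=C_G(S)$ being a $p$-group. The general strategy is to write $S$ as a direct product of simple groups — this is the standard structure theorem for socles recalled in the preliminaries — and then use the centralizer restrictions from Section~\ref{section:centralizers} to rule out all but these two configurations. The key tension the proof must exploit is that a direct product with several factors forces commuting elements of coprime orders coming from different factors, and this contradicts the tight centralizer constraints in Lemma~\ref{lem:centralizers_nonisolated_elements} and Lemma~\ref{lem:basic_obstructions_for_p_elts}.

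**Plan of attack**

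First I would split $S$ into its abelian and non-abelian parts, writing $\soc(G)$ as a product of minimal normal subgroups, each of which is itself a direct power of a simple group. I want to argue that the two types cannot coexist and that in each case the number of factors is severely limited. For the abelian part: if some minimal normal subgroup is elementary abelian $C_p^n$, I would show that no prime $\neq p$ can divide $|S|$, because an element of order $p$ inside this abelian normal subgroup together with an element of different prime order from another socle factor would commute, producing a centralizer that Lemma~\ref{lem:centralizers_nonisolated_elements} forbids. This pins the abelian case down to $S\cong C_p^n$, at which point $G/C_G(S)$ embeds into $\aut(C_p^n)\cong\gl_n(\mathbb{F}_p)$ by the standard action on a minimal normal subgroup; I would then argue $C:=C_G(S)$ is a $p$-group, using that any element of order coprime to $p$ in $C$ would commute with all of $S$ and again violate the centralizer bounds.

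For the non-abelian case I would show there can be only one simple factor. If $S$ had two non-abelian simple direct factors $S_1,S_2$, pick involutions or prime-order elements $a\in S_1$, $b\in S_2$; these commute and have orders from the (at least three, by Burnside's $p^aq^b$-theorem) primes dividing each $|S_i|$, so $C_G(a)$ contains a subgroup of order divisible by two distinct primes other than $|a|$ coming from $S_2$, flatly contradicting the rigid $C_q\times C_p$ or $C_q.(\dots)$ shape forced by Lemma~\ref{lem:centralizers_nonisolated_elements}. Hence $S$ is a single non-abelian simple group $T$, and then $C_G(T)\cap T=Z(T)=1$ forces $C_G(T)=1$, so $G$ embeds into $\aut(T)$ with $T=\inn(T)\trianglelefteq G$, i.e.\ $G$ is almost simple.

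**The main obstacle**

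I expect the delicate part to be cleanly ruling out a \emph{mixed} socle and bounding the abelian case to a single prime, because one must carefully select the right pair of commuting prime-order elements and verify that the resulting centralizer genuinely violates the specific dichotomy in Lemma~\ref{lem:centralizers_nonisolated_elements} (which treats the cases $q<p$ and $q>p$ differently). In particular, since every non-abelian simple group has order divisible by at least three primes, a single non-abelian factor already supplies enough distinct primes that its own element centralizers inside $G$ must be checked against the lemma; the argument must ensure these internal constraints are compatible (they are, which is precisely why the almost simple case survives) while cross-factor commuting pairs are not. The remaining steps — identifying the $\gl_n(\mathbb{F}_p)$ embedding and showing $C_G(S)$ is a $p$-group — are then routine consequences of the action of $G$ on its minimal normal subgroups together with the coprime-order centralizer obstruction.
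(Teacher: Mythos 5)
Your decomposition of $\soc(G)$ and your treatment of the non-abelian and mixed cases follow the paper's proof essentially verbatim, and those parts are sound: there the centralizer of a suitable prime-order element contains a non-abelian simple group, hence has order divisible by at least three primes, which genuinely contradicts Lemma~\ref{lem:centralizers_nonisolated_elements}. The gap is in the purely abelian case. You claim twice that a commuting pair of elements of distinct prime orders ``produces a centralizer that Lemma~\ref{lem:centralizers_nonisolated_elements} forbids'' (once to rule out a second prime dividing $|S|$, once to show $C=C_G(S)$ is a $p$-group). But that lemma does not forbid such pairs; it only restricts the \emph{shape} of their centralizers. Your argument does give a contradiction when some elementary abelian factor has rank at least $2$ (then the centralizer of an element of the larger prime order contains $C_p^2\times C_q$, exceeding the permitted $C_p\times C_q$), but it fails in the rank-one configurations: $S\cong C_p\times C_q$ with both minimal normal subgroups of prime order, and $S\cong C_p$ with a $q$-element centralizing $S$. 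In those cases the relevant centralizers can be exactly $C_p\times C_q$, respectively contain $C_{pq}$, which is precisely what Lemma~\ref{lem:centralizers_nonisolated_elements} allows. Indeed, these configurations occur in honest solvable power-cograph groups: $(C_p\rtimes C_{q^n})\times C_q$ from Theorem~\ref{mainthm1} has socle $C_p\times C_q$. Since your abelian-case argument never uses non-solvability, it would apply to this group as well and ``prove'' something false; no centralizer-shape argument alone can close these cases.

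The missing ingredient, and the way the paper closes exactly these two cases, is to bring in the non-solvability of $G$: if $S\cong C_p\times C_q$ or $S\cong C_p$, then $\aut(S)$ is abelian, so $G/C_G(S)$ is abelian and $C_G(S)$ must contain a non-abelian simple composition factor; but $C_G(S)\leq C_G(x)$ for $x\in S$ of prime order, and Lemma~\ref{lem:centralizers_nonisolated_elements}, applied to the commuting pair in hand, forces $C_G(x)$ to be solvable of order divisible by only two primes --- a contradiction. (The paper also reduces ``$C$ is not a $p$-group'' to the case $n=1$ via Theorem~\ref{thm:nilpotent_poco}: if a $q$-element $g$ centralized $S\cong C_p^n$, then $S\times\langle g\rangle$ would be a nilpotent power-cograph group, hence cyclic of order $pq$, forcing $n=1$; you would need this step, or the rank-$\geq 2$ argument above, before invoking non-solvability.) Insert the non-solvability argument at both places and your proof goes through; without it, the two rank-one cases are genuinely open.
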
 
\begin{proof}
	We can write
\[
	S=A\times T_1\times\dots\times T_m
\]where $A$ is an abelian group and the $T_i$ are non-abelian simple groups. If $m\geq 1$ and $A$ is non-trivial, then there exists $a\in A$ such that, without loss of generality, $T_1\in C_{G}(a)$ holds. Since $|T_1|$ is divisible by at least three primes, the order of $C_G(a)$ is divisible by at least three primes but by Lemma~\ref{lem:centralizers_nonisolated_elements}, this is impossible. An analogous argument applies if $m>1$ holds, even if $A$ is trivial.

Thus $S$ is either an abelian group or a non-abelian simple group. In the second case, $G$ is almost simple. In the first case $A$ is nilpotent and so it is either a $p$-group or isomorphic to $C_p\times C_q$ for primes $p\neq q$. Assume first that $S\cong C_p\times C_q$ holds. Then 
$G/C_G(S)\leq\aut(S)$ and since the latter is abelian, $C_G(S)$ must have a non-abelian simple factor, contradicting Lemma \ref{lem:centralizers_nonisolated_elements}. Thus, $S\cong C_p^n$ for some prime $p$ and it follows that $C_G(S)$ is also a $p$-group. Otherwise there is a $q$-element centralizing $S$ for some prime $q\neq p$, forcing $n$ to be $1$ according to Theorem \ref{thm:nilpotent_poco}. In the same way as before, this contradicts Lemma \ref{lem:centralizers_nonisolated_elements} through $G$ being non-solvable.
\end{proof}

\subsection{The almost simple case}

Recall that the class of power-cograph groups is closed under taking subgroups.
	This means whenever $G$ is an almost simple power-cograph group, the socle $\soc(G)$ is a simple power-cograph group. We refer to Theorem~{\ref{thm:finite_simple_po-cographs}} for a characterization of non-abelian finite simple groups that are power-cograph groups.

\begin{lem}\label{lemma: almost_simple_to_simple_or_pgl}
	If $G$ is an almost simple power-cograph group, then $G$ is either simple or $G=\pgl_2(q)$ for some prime power $q$ or $G=M_{10}$.
\end{lem}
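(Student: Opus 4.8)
The plan is to combine the classification of simple power-cograph groups (Theorem~\ref{thm:finite_simple_po-cographs}) with the centralizer restrictions of Lemmas~\ref{lem:centralizers_nonisolated_elements} and~\ref{lem:characterization_via_centralizers}. Since power-cograph groups are subgroup-closed, $S:=\soc(G)$ is a simple power-cograph group, hence isomorphic to $\psl_2(q)$ ($q$ odd), $\psl_2(2^f)$, $\sz(q)$ or $\psl_3(4)$, and $G/S$ embeds into $\aut(S)/S=\mathrm{Out}(S)$. The engine of the proof is the following consequence of Lemma~\ref{lem:centralizers_nonisolated_elements}: for every $\sigma\in G$ of prime order, $|C_G(\sigma)|$ is divisible by at most two primes, the prime different from $|\sigma|$ divides it exactly once, and (by condition~2 of Lemma~\ref{lem:characterization_via_centralizers}) when that prime $r$ exceeds $|\sigma|$ the group $C_G(\sigma)$ has a normal subgroup isomorphic to $C_r$. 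In particular $C_G(\sigma)$ is solvable and of very restricted shape.

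First I would exploit that an outer automorphism of prime order $\ell$ of a group of Lie type has fixed-point subgroup equal to a smaller group of the same type: for a field automorphism of $\psl_2(p^f)$ it is $\psl_2(p^{f/\ell})$ up to a diagonal factor, for $\sz$ it is $\sz(2^{(2e+1)/\ell})$, and for the field, graph, graph-field and diagonal automorphisms of $\psl_3(4)$ it is (a subgroup containing) $\psl_3(2)\cong\psl_2(7)$, $\psl_2(4)$, $\mathrm{PSU}_3(2)$ and a Levi subgroup $\gl_2(4)/Z$ respectively. Whenever such an automorphism is realised by an honest element $\sigma\in G\setminus S$ of order $\ell$ (this holds for the field and graph automorphisms and for the order-three diagonal automorphism of $\psl_3(4)$, as the relevant extensions split) and $C_S(\sigma)$ is \emph{non-solvable}, then $C_S(\sigma)$ is divisible by at least three primes by Burnside's theorem, so $C_G(\sigma)\supseteq C_S(\sigma)$ contradicts the restriction above. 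The fixed-point subgroup is non-solvable as soon as the relevant subfield has order at least $4$, so this single observation excludes all field automorphisms of $\sz(q)$ and of $\psl_2(2^f)$ with $2^{f/\ell}\ge 4$, all odd-order field automorphisms of $\psl_2(p^f)$, the even-order field automorphisms whenever $p^{f/2}\ge 5$, and the field, graph and diagonal automorphisms of $\psl_3(4)$. The graph-field automorphism of $\psl_3(4)$ escapes this dichotomy because $\mathrm{PSU}_3(2)$ is solvable, but its order $2^3\cdot 3^2$ makes the prime $3$ divide $|C_G(\sigma)|$ at least twice, so it is excluded as well. Hence for $S\in\{\sz(q),\psl_3(4)\}$ only $G=S$ survives.

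It remains to analyse the small-subfield residue, where $\pgl_2(q)$ and $M_{10}$ appear. For $\psl_2(2^f)$ the only escape is $f/\ell=1$ with $C_S(\sigma)\cong\psl_2(2)\cong S_3$; for $f\ge 3$ this still forces a third prime or the prime $3$ to multiplicity two in $C_G(\sigma)$, while $f=2$ gives $\psl_2(4).2\cong S_5\cong\pgl_2(5)$, which is of the allowed form $\pgl_2(q)$. For odd $q=p^f$ the field content can therefore only be an involution with $p^{f/2}=3$, i.e. $q=9$; all other content is diagonal, yielding $G=\pgl_2(q)$. For the exceptional socle $\psl_2(9)\cong A_6$ I would treat the three index-two overgroups directly: the split field extension is $S_6$, whose outer involutions have centralizer $\pgl_2(3)\cong S_4$ with no normal subgroup of order $3$, so it is excluded by condition~2 of Lemma~\ref{lem:characterization_via_centralizers}; the diagonal extension is $\pgl_2(9)$; and the product extension is the \emph{non-split} group $M_{10}$, whose outer coset contains no involution. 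The full automorphism group $\mathrm{P\Gamma L}_2(9)$ contains $S_6$ and is excluded by subgroup-closedness. Assembling the survivors gives exactly $G$ simple, $G=\pgl_2(q)$, or $G=M_{10}$.

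The hard part is precisely this endgame. Once the fixed-point subgroup ceases to be non-solvable, the uniform centralizer argument no longer fires, and everything hinges on whether the relevant extension is \emph{split}: the centralizer engine requires a prime-order outer element, and a non-split extension such as $M_{10}$ simply has none, which is exactly why it survives while its split sibling $S_6$ does not. The delicate points are therefore to identify the small fixed-point subgroups ($S_3$, $\pgl_2(3)\cong S_4$, $\mathrm{PSU}_3(2)$) exactly and to settle the split/non-split dichotomy for the borderline extensions — in particular to confirm that $q=9$ is the unique case producing a non-split product extension, so that the product-type extensions of the larger groups $\psl_2(p^2)$ are ruled out (via their non-solvable involution centralizer $\psl_2(p)$ when they split) and only $M_{10}$ remains. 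This step relies on case-specific structural and ATLAS-level information rather than on the uniform argument.
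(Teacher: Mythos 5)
Your overall strategy (read off the socle from Theorem~\ref{thm:finite_simple_po-cographs}, then kill prime-order outer elements whose fixed-point subgroups are too large for Lemma~\ref{lem:centralizers_nonisolated_elements}) agrees with the paper's proof on the field-automorphism cases, and your GAP-free treatment of $\psl_3(4)$ is a nice variation. But there is a genuine gap, located exactly at the point you yourself flag as delicate: the diagonal-field (``product'') extensions of $\psl_2(p^f)$, $f$ even. Your plan hinges on the claim that $q=9$ is the \emph{unique} case in which $\psl_2(q).2_3$ is non-split, so that for larger $q$ an outer involution with non-solvable centralizer exists. That claim is false: for \emph{every} odd $q_0$, the coset of $\psl_2(q_0^2)$ over the diagonal-field class contains no involution at all. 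Indeed, if $g\phi$ is an involution in $\mathrm{P\Gamma L}_2(q_0^2)$ (with $\phi$ the field involution), then $gg^{(q_0)}=\lambda I$ with $\lambda^{q_0-1}=1$, hence $\det(g)^{(q_0^2-1)/2}=\bigl(\lambda^2\bigr)^{(q_0-1)/2}=1$, so $\det g$ is a square and $g\phi$ lies over the pure field class. The non-split groups $\psl_2(q_0^2).2_3$ are precisely the second family of sharply $3$-transitive Zassenhaus groups, of which $M_{10}$ is only the smallest member. Consequently your centralizer engine can never fire on them (their outer coset contains no prime-order element), your fallback never applies, and your argument fails to exclude $\psl_2(25).2_3$, $\psl_2(49).2_3$, and so on.

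The paper closes exactly this hole by a socle-level argument that your proposal lacks: when $f$ is even, $\psl_2(p^f)$ contains a cyclic subgroup of order $(p^2-1)/2$, which for $p\geq 5$ is divisible by $2^2\cdot 3$ and hence is neither a prime power nor a product of two distinct primes (a similar divisibility argument disposes of $p=3$, $f>2$). So for $f$ even and $q\neq 9$ the \emph{socle itself} is not a power-cograph group, and subgroup-closedness eliminates every almost simple group above it, split or not --- no outer elements are needed. Without some argument of this kind your endgame cannot be completed. Two smaller points: for $\sz(2^f)$ with $f$ prime the fixed-point subgroup is $\sz(2)$, which is \emph{solvable} of order $20$, so your non-solvability criterion does not literally apply there and you need the order-based contradiction via Lemma~\ref{lem:centralizers_nonisolated_elements} (as the paper does); and your treatment of $\psl_3(4)$ likewise presupposes prime-order representatives in each outer coset, which must be justified class by class for the same split/non-split reason.
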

\begin{proof}
		Let $\soc(G)$ be a non-abelian simple group so that $G$ is an almost simple group. Note that $\soc(G)$ must be a power-cograph group due to subgroup-closedness.
	Due to Theorem~\ref{thm:finite_simple_po-cographs}, it suffices to check automorphisms of $\psl_3(4)$, $\psl_2(q)$ and $\sz(q)$.
	
	We can immediately exclude the first case via computations we performed in GAP~\cite{GAP4}. 

\paragraph{Extensions of $\psl_2(p^f)$ by field automorphisms, $p$ odd:}
	Suppose $G=\psl_2(p^f)\langle \alpha \rangle$ where $p$ is an odd prime and $\alpha$ is a field automorphism whose order divides $f$. Then $\alpha$ is a field automorphism of $\mathbb{F}_p^f$ which acts on $\psl_2(q)$ by acting on the respective matrix representatives in $\spl_2(q)$ entry-wise.

	\textbf{Case $f$ even:}
	Then $p^2-1$ divides $p^f-1$ and thus there exist elements of order $(p-1)(p+1)/2$ in $\psl_2(p^f)$. If $p\neq 3$, then $(p-1)(p+1)$ can neither be a product of two primes nor a prime power (since $p>3$ is prime) so $\psl_2(p^f)$ is not a power-cograph group by Theorem~\ref{thm:finite_simple_po-cographs}. If $p=3$ and $f>2$, then either $40=(3^4-1)/2$ divides $(p^f-1)/2$ or $3^{2g}-1$ divides $p^f-1$ for some odd number $g$. If the first case holds, then again $\psl_2(p^f)$ is not a power-cograph group. In the second case, $(3^g-1)(3^g+1)$ divides $p^f-1$. Then $(3^g+1)$ is divisible by $4$ since $g$ is odd, and $3^g-1$ is not a power of $2$, so $(p^f-1)/2$ is divisible by $4m$, where $m$ is an odd number. As in the first case, $\psl_2(p^f)$ is not a power-cograph group. In conclusion, either $f=2$ and $p=3$ or $f$ is odd.

In the case $f=2$ and $p=3$, it holds that $G\cong S_6$, and $G$ is not a power-cograph.

	\textbf{Case $f$ odd:}
	Since $\alpha$ fixes elements of the base field $\mathbb{F}_p$, in particular all matrices defined over $\mathbb{F}_p$ are fixed. Therefore, $\psl_2(p) \lesssim C_G(\alpha)$. Then if $p> 3$, the order of $C_G(\alpha)$ must be divisible by at least three primes as the group $\psl_2(p)$ is a nonabelian simple group, and by Lemma~\ref{lem:centralizers_nonisolated_elements} this is impossible. If $p=3$, then $\psl(2,3)\cong A_4$ is centralized by $\alpha$. If $C_G(\alpha)$ fulfills the necessary conditions in Lemma~\ref{lem:centralizers_nonisolated_elements}, then the order of $\alpha$ must be $2$, but we assumed $f$ to be odd here.

\paragraph{Extensions of $\psl_2(p^f)$ by diagonal-field automorphisms, $p$ odd:}
 Here $G=\psl_2(p^f)\langle \alpha \rangle$ where $p$ is an odd prime and $\alpha=\delta\sigma$ is a product of a diagonal automorphism $\delta$ and a non-trivial field automorphism $\sigma$.  If $|\sigma|$ is odd, then $G$ contains a subgroup $\psl(2,p^f)\langle \sigma' \rangle$ for some field automorphism $\sigma'$ of odd order, and the argument from above implies that $G$ is not a power-cograph group. So $|\sigma|$ is even. If $f$ is even, the arguments from above leave us with the case of $\psl(2,9)\langle \alpha \rangle$, where $\alpha$ has order $2$. Then $G\cong M_{10}$ which is an EPPO-group, and in particular, it has a power-cograph.

\paragraph{Extensions of $\psl_2(2^f)$ by field automorphisms}
    Suppose now $G=\psl_2(q)\langle \alpha \rangle$ where $q$ is a power of $2$. Then every outer automorphism is a field automorphism, and each field automorphism fixes at least a copy of $\psl_2(2)\cong S_3$. This can only happen if the field automorphism is of order $2$ and has no fixed field larger than $\mathbb{F}_2$, which implies that $q=4$. But $\psl_2(4)\cong\psl_2(5)$ has an automorphism group isomorphic to $\pgl_2(5)$.
   
\paragraph{Extensions of Suzuki groups by field automorphisms:}
    We use a similar argument for the Suzuki group case as in the $\psl_2(q)$ case. Let $G=\sz(2^{f})\langle \alpha \rangle$ with $\alpha$ a field automorphism whose order divides $f$, where $f$ is odd and $f\geq 3$. Then $\alpha$ fixes a subgroup isomorphic to $\sz(2)$ of order $20$, and again the result follows by Lemma~\ref{lem:centralizers_nonisolated_elements}.
    
    In conclusion, the only remaining cases are groups of the form $G=\pgl_2(q)$ for an appropriate prime power $q$.
\end{proof}

\begin{lem}\label{lem: psl-G-pgl_then_G_psl_or579}
	Let $\psl_2(q)\trianglelefteq G\leq\pgl_2(q)$ and assume $G$ is non-solvable. Then $G$ is a power-cograph group if and only if one of the following holds:
	\begin{enumerate}
		\item $q>3$ and $G=\psl_2(q)$
		\item $q$ is $5$, $7$, or $9$.
	\end{enumerate}
\end{lem}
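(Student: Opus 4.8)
The plan is to reduce to two explicit groups and then treat the proper extension carefully. Since $[\pgl_2(q):\psl_2(q)]=\gcd(2,q-1)\le 2$, the hypothesis $\psl_2(q)\trianglelefteq G\le\pgl_2(q)$ forces $G=\psl_2(q)$ or ($q$ odd and) $G=\pgl_2(q)$; non-solvability of $G$ rules out $q\le 3$, so $q>3$. The case $G=\psl_2(q)$ is simple, and whether it is a power-cograph group is governed precisely by Theorem~\ref{thm:finite_simple_po-cographs}; this accounts for case~1 together with the simple members of case~2. Thus the substantial content is to show that the proper extension $G=\pgl_2(q)$ (with $q$ odd and $q>3$) is a power-cograph group exactly when $q\in\{5,7,9\}$.

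For the forward implication the key observation is that $\pgl_2(q)$ contains its full maximal tori: a split torus $\cong\mathbb{F}_q^*$ that is cyclic of order $q-1$, and a non-split torus $\cong\mathbb{F}_{q^2}^*/\mathbb{F}_q^*$ that is cyclic of order $q+1$. (This is exactly where $\pgl_2(q)$ is more constrained than $\psl_2(q)$, whose tori have orders $(q\pm1)/2$.) Since the class of power-cograph groups is closed under subgroups, both $C_{q-1}$ and $C_{q+1}$ are power-cograph, so by the nilpotent classification (Theorem~\ref{thm:nilpotent_poco}) each of $q-1$ and $q+1$ is a prime power or a product of two distinct primes. As $q$ is odd, both numbers are even, hence each is either a power of $2$ or twice an odd prime.

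The crux is then an elementary case analysis over the four resulting combinations. Two powers of $2$ differing by $2$ give only $q=3$. If $q-1=2^a$ and $q+1=2s$ with $s$ an odd prime, then $s=2^{a-1}+1$ must be prime, which forces $a-1$ to be a power of $2$; feeding this into the requirement that $q=2^a+1$ be a prime power leaves only $q=5$ and $q=9$. Symmetrically, $q-1=2r$ with $r$ an odd prime and $q+1=2^b$ forces $r=2^{b-1}-1$ prime and $q=2^b-1$ a prime power, which leaves only $q=7$. The last combination $q-1=2r,\ q+1=2s$ is impossible, since two odd primes cannot differ by $1$. I expect this bookkeeping to be the main (though entirely elementary) obstacle: the large Fermat/Mersenne-type candidates are eliminated using only $3\mid 2^m+1$ for odd $m$, together with the factorisations $3^{2l}-1=(3^l-1)(3^l+1)$ and $2^b-1=(2^{b/2}-1)(2^{b/2}+1)$ for even $b$, so that Catalan's theorem is not needed.

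For the converse I would verify, for each $q\in\{5,7,9\}$, that $\pgl_2(q)$ satisfies the centralizer criterion of Lemma~\ref{lem:characterization_via_centralizers}. The only prime-order elements whose centralizers are not $p$-groups are the involutions and the odd-order torus elements. The split and non-split involutions have dihedral centralizers $D_{2(q-1)}$ and $D_{2(q+1)}$; for each of these $q$ one of them is a $2$-group, while the other has the form $D_{4\ell}$ for an odd prime $\ell$, whose quotient by its characteristic subgroup $C_\ell$ is $C_2\times C_2$, matching condition~2 (and one checks the relevant involution is not a square, consistent with the final clause of Lemma~\ref{lem:characterization_via_centralizers}). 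Odd-order torus elements of order $\ell$ have cyclic centralizers $C_{q-1}$ or $C_{q+1}$ of the form $C_2\times C_\ell$, matching condition~1, while unipotent elements have $p$-group centralizers. As these are small groups, the verification can equally be carried out by a direct computation in GAP~\cite{GAP4}, as was already done for $\psl_3(4)$.
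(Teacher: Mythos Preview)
Your proof is correct and follows the same overall strategy as the paper: reduce to $G=\pgl_2(q)$ with $q$ odd, use the maximal tori $C_{q-1}$ and $C_{q+1}$ together with Theorem~\ref{thm:nilpotent_poco} to force each of $q\pm 1$ to be a power of $2$ or twice an odd prime, and then resolve the remaining number theory. The difference lies in how that number theory is handled. The paper first notes that exactly one of $q\pm 1$ is divisible by $4$ and hence must be the $2$-power, and then invokes Mih\u{a}ilescu's theorem (Catalan's conjecture) to dispose of the case where $q$ is a proper prime power, isolating $q=9$; the prime values of $q$ are then handled by the classical Fermat/Mersenne constraints that $2^n+1$ prime forces $n$ a power of $2$ and $2^n-1$ prime forces $n$ prime. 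Your four-case split reaches the same conclusion without Catalan, using only $3\mid 2^a+1$ for odd $a$ and the factorisations $3^{2l}-1=(3^l-1)(3^l+1)$ and $2^b-1=(2^{b/2}-1)(2^{b/2}+1)$ to eliminate the higher candidates. This is a mild but genuine simplification: the paper's route is shorter to state, while yours keeps the argument self-contained. Your converse verification via Lemma~\ref{lem:characterization_via_centralizers} is also more explicit than the paper's, which simply asserts that $\pgl_2(5)$, $\pgl_2(7)$ and $\pgl_2(9)$ are power-cograph groups.
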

\begin{proof}
	If $q$ is even then $\psl_2(q)=\pgl_2(q)$, thus assume that $q$ is odd.	Then $\psl_2(q)$ has index $2$ in $\pgl_2(q)$, so if $G$ is not equal to $\psl_2(q)$ then $G=\pgl_2(q)$ holds.
	In the latter case, $G$ has cyclic subgroups of order $q-1$ and $q+1$ and their orders must be prime powers or a product of two distinct primes for $\pow(G)$ to be a cograph.
	But one of $q-1$ and $q+1$ is divisible by $4$, so one of them is a $2$-power and the other is $2r$ with a prime $r>2$.
	
	We could be dealing with the unique solution to the Catalan Conjecture, i.e., $q=9$ and $q-1=8$. Since $q=9$ gives $q+1=10$, $\pgl_2(9)$ is indeed power-cograph. Furthermore, $q>3$ implies $2<q-1,q+1$, so this is actually the only possible case if $q$ is not prime. 
	
 	We are left with two cases, since $q$ is a prime of the form $2^a\pm 1$ now.
 	If $q=2^a+1$ then $q+1=2(2^{a-1}+1)$, so $2^{a-1}+1$ must be prime as well. A well-known result (see for example~\cite[Page 1]{lectures_on_Fermat_numbers}) states that both $a$ and $a-1$ must be powers of $2$ (including the possibility $1=2^0$) which is only possible if $a=2$. This gives us $\pgl_2(5)$, which is again power-cograph. If $q=2^a-1$ then $q-1=2(2^{a-1}-1)$ and so $2^{a-1}-1$ is an odd prime. It is well-known that then both $a$ and $a-1$ have to be primes (see for example~\cite[Appendix B, Theorem B.1]{lectures_on_Fermat_numbers}), so $a=3$. Indeed $\pgl_2(7)$ is also a power-cograph group.
\end{proof}

\subsection{The abelian socle case}

Recall that $2$-Frobenius groups are always solvable. We observe the following additional restriction for non-solvable groups.
\begin{lem}
If $G$ is a non-solvable power-cograph group, then $G$ is not a Frobenius group.
\end{lem}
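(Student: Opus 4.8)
The plan is to argue by contradiction: assume $G$ is a non-solvable Frobenius power-cograph group and show that $G$ is forced to contain a copy of $\spl_2(5)$, which is impossible for a power-cograph group.

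Write $G = K \rtimes H$ with Frobenius kernel $K$ and complement $H$. Since $K = \fit(G)$ is nilpotent, it is in particular solvable, so the non-solvability of $G$ forces $H$ to be non-solvable. I then invoke the structure theorem for non-solvable Frobenius complements (item~5 of the list of Frobenius properties in Section~\ref{section:preliminaries}): $H$ has a normal subgroup of index $1$ or $2$ isomorphic to $\spl_2(5) \times M$, where $M$ is metacyclic of order coprime to $30$. In particular $\spl_2(5) \lesssim H \leq G$, and since the power-cograph property is inherited by subgroups, $\spl_2(5)$ would have to be a power-cograph group.

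The remaining step is to rule this out, i.e.\ to show that $\spl_2(5)$ is \emph{not} a power-cograph group. Here I would use that $\spl_2(5)$ has a central involution $z = -I$, so that $C_{\spl_2(5)}(z) = \spl_2(5)$ has order $120 = 2^3 \cdot 3 \cdot 5$. Applying Lemma~\ref{lem:centralizers_nonisolated_elements}(2) with $w = z$ (of order $2$) and any commuting element $x$ of odd prime order $p \in \{3,5\}$, the centralizer $C_G(z)$ would be forced into the shape $\langle x\rangle \rtimes (\langle z\rangle \times C_{2^n})$, whose order is divisible only by $2$ and the single odd prime $p$. This contradicts the fact that $|C_{\spl_2(5)}(z)| = 120$ has the two distinct odd prime divisors $3$ and $5$. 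Equivalently, one can exhibit the obstruction concretely: choosing $a,b \in \spl_2(5)$ of orders $3$ and $5$, the tuple $(a,\,az,\,z,\,bz)$ of elements of orders $3,6,2,10$ induces a $P_4$ in $\pow(\spl_2(5))$, since $a=(az)^4$, $z=(az)^3$ and $z=(bz)^5$, while no other pair among them satisfies a power relation (their orders are pairwise incompatible). Either way $\spl_2(5)$ is not a power-cograph group, contradicting the previous paragraph and finishing the proof.

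The argument carries essentially no computational burden; the only real content is identifying the right obstruction. The main point, and the only place where non-solvability is genuinely used, is that a non-solvable Frobenius complement must contain $\spl_2(5)$ by the classification of such complements, after which the centralizer restrictions of Section~\ref{section:centralizers} immediately eliminate it. The delicate number-theoretic difficulties that complicate the simple and almost-simple cases do not intervene here at all.
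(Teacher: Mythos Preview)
Your argument is correct and follows essentially the same route as the paper: reduce to the Frobenius complement being non-solvable, invoke the structural result that it then contains a copy of $\spl_2(5)$, and rule out $\spl_2(5)$ via the centralizer of its central involution using Lemma~\ref{lem:centralizers_nonisolated_elements}. Your version is slightly more detailed (you spell out why $H$ rather than $K$ must be non-solvable and also exhibit an explicit induced $P_4$), but the core idea is identical.
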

\begin{proof}
Let $G = K \rtimes H$ be a non-solvable Frobenius group. Then the Frobenius complement $H$ (and thus $G$) contains a subgroup isomorphic to $\mathrm{SL}_2(5)$, which has non-trivial center and more then $3$ primes dividing its order. In particular, $\spl_2(5)$ is not a power-cograph group according to Lemma~\ref{lem:centralizers_nonisolated_elements}. 
\end{proof}

\begin{lem}\label{lem:2_isolated}
If $G$ is a non-solvable power-cograph group with an abelian socle, then $2$ is isolated in $\pi(G)$. In particular, $\pi(G)$ is disconnected.
\end{lem}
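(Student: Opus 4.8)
The plan is to argue by contradiction: suppose $2$ is not isolated in $\pi(G)$. Since $2$ is the smallest prime dividing $|G|$, any edge incident to $2$ has the form $\{2,q\}$ with $q$ odd, and by Lemma~\ref{lem:pi_star} the prime $2$ is the centre of the star containing this edge. First I would record the local consequences of this edge via the earlier centralizer lemmas. By Lemma~\ref{lem:sylows_nonisolated_elements} every Sylow $q$-subgroup of $G$ is cyclic of order $q$, and by Lemma~\ref{lem:centralizers_nonisolated_elements}, for an element $x$ of order $q$ commuting with an involution one has $C_G(x)\cong C_q\times C_2$, while the centralizer of the relevant involution is a solvable $\{2,q\}$-group of order $q\cdot 2^{m}$. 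In parallel I would invoke Lemma~\ref{lem: almost_simple_or_abelian_socle} to fix the ambient structure: $S=\soc(G)\cong C_p^n$, the subgroup $C:=C_G(S)$ is a $p$-group, and $\bar G:=G/C$ is a non-solvable subgroup of $\gl_n(\mathbb{F}_p)$.

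The next step is to pin down the prime $q$ relative to $p$. I would first rule out $q=p$: since Sylow $q$-subgroups are $C_q$ while $S\cong C_p^n\le\mathcal{O}_p(G)$, the case $q=p$ forces $n=1$ and $C=S\cong C_p$, whence $\bar G\hookrightarrow\gl_1(\mathbb{F}_p)\cong C_{p-1}$ is abelian, contradicting non-solvability. Thus $q\neq p$, and an order-$q$ element $x$ acts coprimely on $S$. Because $C_S(x)\le C_G(x)\cong C_q\times C_2$ has trivial $p$-part when $p$ is odd (as then $p\notin\{2,q\}$), $x$ acts fixed-point-freely on $S$; in the remaining case $p=2$ the same computation bounds $|C_S(x)|\le 2$, and I would treat the fixed involution, if any, as lying in $S$. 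In either case $S\rtimes\langle x\rangle$ is a Frobenius group, and when $p$ is odd the commuting involution must invert $S$, so it together with $\langle x\rangle$ generates a cyclic complement $C_{2q}$ acting fixed-point-freely on $S$.

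Finally I would derive the contradiction from the non-solvability of $\bar G$. The strategy is to exploit that $\bar G$ is a non-solvable linear group on $S=\mathbb{F}_p^n$ whose Sylow $q$-subgroups are cyclic of order $q$ and whose $2$-local structure is severely constrained by Lemma~\ref{lem:sylows_nonisolated_elements}; combined with the fixed-point-free action just established and the fact (proved just above) that $G$ is not a Frobenius group, I expect these restrictions to be incompatible with the presence of a non-abelian simple section. Concretely, I would aim to produce a prime-order element whose centralizer is divisible by three distinct primes, directly contradicting Lemma~\ref{lem:centralizers_nonisolated_elements}, by analysing how a component of $\bar G$ meets $S$ and the Frobenius complement (or, equivalently, exhibiting a forbidden configuration through Lemma~\ref{lem:basic_obstructions_for_p_elts}). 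This last step is the main obstacle: translating the global non-solvability of $\bar G$ into a concrete forbidden centralizer requires controlling the interaction between the simple section of $\bar G$ and the very restricted permitted element orders, and is where the real work lies. Once $2$ is shown to be isolated, the final clause is immediate: $G$ is non-solvable, so $|G|$ is even and divisible by at least three primes, hence $\pi(G)$ has the isolated vertex $2$ as one component together with a nonempty remainder, and is therefore disconnected.
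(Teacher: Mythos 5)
Your proposal has the right setup (the reduction to $S=\soc(G)\cong C_p^n$ with $C_G(S)$ a $p$-group, the centralizer restrictions from Lemmas~\ref{lem:centralizers_nonisolated_elements} and~\ref{lem:sylows_nonisolated_elements}, and the exclusion of $q=p$ via $\gl_1(\mathbb{F}_p)$ being abelian), but it stops exactly where the proof has to happen. The entire content of the lemma is the step you defer: turning the non-solvability of $G/C_G(S)$ into a contradiction with the permitted centralizer structure. You write that you ``expect these restrictions to be incompatible'' and that this ``is where the real work lies'' --- that is an acknowledged gap, not a proof. Moreover, two concrete problems appear in what you do write: (i) you never reduce to $p=2$; for $p$ odd and $p$ not adjacent to $2$ you have no control on $n$, and your Frobenius-complement analysis of $S\rtimes C_{2q}$ does not by itself produce a forbidden centralizer; (ii) your claim that ``in either case $S\rtimes\langle x\rangle$ is a Frobenius group'' is false precisely in the case $p=2$ with the commuting involution inside $S$, i.e., $C_S(x)\neq 1$, which is the main case one must confront.

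For comparison, the paper closes this in two concrete moves that are absent from your outline. First, $S$ is forced to be an elementary abelian $2$-group: a cyclic socle is impossible (as in your step~4), and by Lemma~\ref{lem:disconnected_pi} (using that $G$, being a non-solvable power-cograph group, is neither Frobenius nor $2$-Frobenius) together with the star structure of Lemma~\ref{lem:pi_star}, every odd prime dividing $|S|$ would be adjacent to $2$ and hence divide $|G|$ only once, again giving a cyclic socle. Second --- and this is the key missing idea --- assuming $g$ of odd prime order commutes with an involution $w$, one picks an involution $z\in S\cap Z(H)$ for a Sylow $2$-subgroup $H$ containing $w$. If $z$ did not commute with $g$, then $z$ would invert $\langle g\rangle\trianglelefteq C_G(w)$, and the product $z\cdot(gzg^{-1})=g^{-2}$ would be an element of odd order inside the Klein four group $\langle z,gzg^{-1}\rangle\leq S$, a contradiction; hence $z$ centralizes $g$. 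Then $S\leq C_G(z)$, and the structure of $C_G(z)$ from Lemma~\ref{lem:centralizers_nonisolated_elements} forces $|S|\leq 4$, so $G/C_G(S)$ embeds into $\gl_2(2)\cong S_3$, which is solvable --- the desired contradiction. Without an argument of this kind (or some substitute), your proof attempt does not establish the lemma.
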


\begin{proof}
Let $S=\soc(G)$. If $S=C_p$, then consider the action of $G$ on $S$ via conjugation. This induces $\phi:G\to \aut(S)$ with kernel $C_G(S)$. Since $G$ is non-solvable, either $\ker\phi$ or $\text{Im }\phi$ contains a non-abelian simple composition factor. However, $\aut(S)$ is abelian and $C_G(S)$ cannot contain this factor by Lemma \ref{lem:centralizers_nonisolated_elements}, so this is a contradiction. 

Now either $\pi(G)$ is disconnected and then all prime divisors of $|\soc(G)|$ must be in the component of $2$ (Lemma~\ref{lem:disconnected_pi}) or $\pi(G)$ forms a star with center $2$ (Lemma~\ref{lem:pi_star}). In any case, odd primes divide $|\soc(G)|$ at most once and since we can rule out cyclic socles by non-solvability of $G$, we can assume that 
$S=\mathbb{F}_2^d$ with $d\geq 1$. Assume that $2$ is not isolated in $\pi(G)$, so there exists $g\in G$ such that $|g|=p\neq 2$ with $|C_G(g)|$ even. Suppose $w\in C_G(g)$ such that $w$ has order $2$.

Let $H$ be a Sylow $2$-subgroup of $G$ with $w\in H$.
Since $S\trianglelefteq G$ and $S$ is a $2$-group, we have $S\leq O_2(G)$, where $O_2(G)$ denotes the largest normal $2$-subgroup of $G$. Note that $O_2(G)$ is the intersection of all Sylow $2$-subgroups of $G$.  Then $S\trianglelefteq H$ and $S\cap Z(H)$ is non-trivial because $H$ is a Sylow $2$-subgroup and any normal subgroup intersects the center non-trivially. So there exists $z\in S\cap Z(H)$ such that $|z|=2$ and $z\in C_G(w)$. Now suppose $z \not \in C_G(g)$. Since $z$ centralizes $w$ and $w$ centralizes $g$, we have that $z$ normalizes $\langle g \rangle \trianglelefteq C_G(w)$. Furthermore, $zgz^{-1}=zgz=g^{-1}$ because $z$ commutes with $w$ and it has order $2$. Now $g$ normalizes $S$ so $gzg^{-1}\in S\setminus \langle z \rangle$ because $g$ and $z$ do not commute. Thus $\langle z, gzg^{-1} \rangle \cong V_4$ since it is a subgroup of $\mathbb{F}_2^d$ and it is not cyclic. However, $zgzg^{-1}=g^{-2}$ is of order $p$ and an element of $V_4$, which is a contradiction.
Thus $z\in C_G(g)$ and $g\in C_G(z)$. By Lemma~\ref{lem:centralizers_nonisolated_elements}, $C_G(z)=\langle g \rangle \rtimes Q$, where $Q$ is a $q$-group of rank at most $2$. As $S\subseteq C_G(z)$, $S\cong \mathbb{F}_2$ or $S\cong \mathbb{F}_2^2$. Then $G/ C_G(S)\leq \aut(S)$ (as above, by Lemma~\ref{lem:centralizers_nonisolated_elements}, $C_G(S)$ cannot have non-abelian simple composition factors) must have a non-abelian simple composition factor, which is impossible due to $\mathrm{GL}_2(2)\cong S_3$.
\end{proof}

\begin{lem}
If $G$ is a non-solvable power-cograph group, then $G$ has a normal subgroup $N$ that is either trivial or a $2$-group, such that in any case $G/N$ is almost simple. Moreover, $G/N$ is a power-cograph group.
\end{lem}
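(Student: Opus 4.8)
The plan is to split along the socle dichotomy of Lemma~\ref{lem: almost_simple_or_abelian_socle}. If $\soc(G)$ is non-abelian simple, then $G$ is already almost simple and I would simply take $N=1$, with nothing further to prove since $G/N=G$. So I would concentrate on the remaining case, where $\soc(G)\cong C_2^d$ is an elementary abelian $2$-group (the characteristic being $2$ is exactly what the proof of Lemma~\ref{lem:2_isolated} extracts). Here I would set $N:=\mathcal{O}_2(G)$, which contains $\soc(G)$ and is therefore a non-trivial normal $2$-subgroup, and write $\bar G:=G/N$, noting that $\mathcal{O}_2(\bar G)=1$.

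To see that $\bar G$ is almost simple, I would use that $\pi(G)$ is disconnected (Lemma~\ref{lem:2_isolated}) together with the trichotomy of Lemma~\ref{lem:disconnected_pi}. Since a non-solvable power-cograph group is neither Frobenius (by the preceding lemma) nor $2$-Frobenius (as $2$-Frobenius groups are solvable), $G$ must fall into the third case: there is a normal series $1\trianglelefteq K_1\trianglelefteq K_2\trianglelefteq G$ with $K_1$ a $2$-group, $T:=K_2/K_1$ simple and $G/K_2$ a $2$-group, and non-solvability forces $T$ to be non-abelian simple. As $K_1\leq N$, the subgroup $(K_2\cap N)/K_1$ is a normal $2$-subgroup of the simple group $T$, hence trivial, so $K_2\cap N=K_1$ and $\bar K_2:=K_2N/N\cong T$ is a non-abelian simple normal subgroup of $\bar G$ with $\bar G/\bar K_2$ a $2$-group. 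Finally $C_{\bar G}(\bar K_2)$ meets $\bar K_2$ trivially and so embeds into the $2$-group $\bar G/\bar K_2$; it is therefore a normal $2$-subgroup of $\bar G$ and thus trivial because $\mathcal{O}_2(\bar G)=1$. Hence $\bar K_2=\soc(\bar G)$ and $\bar G$ is almost simple.

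The more delicate part is that $\bar G$ is again a power-cograph group, since the cograph property need not descend to quotients; here I would verify the centralizer characterization of Lemma~\ref{lem:characterization_via_centralizers} directly in $\bar G$ rather than lift an arbitrary $P_4$. First I would check that $2$ stays isolated in $\pi(\bar G)$: an element of order $2p$ in $\bar G$ lifts to an element of order $2^a p$ in $G$ (all primes other than $2$ and $p$ being absorbed by the $2$-group $N$), a suitable power of which has order $2p$, contradicting Lemma~\ref{lem:2_isolated}. Consequently every involution of $\bar G$ has a $2$-group centralizer, so the characterization conditions hold vacuously for involutions. For $\bar g$ of odd prime order $p$, I would lift it through Schur--Zassenhaus to an element $g\in G$ of order $p$ and invoke the coprime-action identity $C_{\bar G}(\bar g)=\overline{C_G(g)}$. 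Since $2$ is isolated in $\pi(G)$, the centralizer $C_G(g)$ already has odd order, whence $C_G(g)\cap N=1$ and $C_{\bar G}(\bar g)\cong C_G(g)$; the conditions of Lemma~\ref{lem:characterization_via_centralizers} then transfer verbatim from $g$ to $\bar g$, so $\bar G$ is a power-cograph group.

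The hard part is precisely this last transfer of the cograph property: the argument succeeds only because $N$ is a $2$-group, because $2$ is isolated in $\pi(G)$ and (as shown) in $\pi(\bar G)$, and because the coprime-action centralizer identity lets the odd-order centralizer structure pass unchanged to $\bar G$. I expect the care to be concentrated in justifying the coprime lift of $\bar g$ and the equality $C_{\bar G}(\bar g)=\overline{C_G(g)}$, and in confirming that isolation of $2$ really forces $C_G(g)$ to have odd order for every odd-prime-order element $g$.
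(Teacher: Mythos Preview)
Your argument is correct. The structural part (that $\bar G=G/\mathcal{O}_2(G)$ is almost simple) follows the same path as the paper via Lemma~\ref{lem:disconnected_pi}, though you are more explicit in verifying $C_{\bar G}(\bar K_2)=1$ where the paper simply records $G/\mathcal{O}_2(G)\cong T.H$ with $H$ a $2$-group. The genuine divergence is in how you show $\bar G$ is again a power-cograph group: the paper appeals directly to Lemma~\ref{lem:lifting_P4}, arguing that any induced $P_4$ in $\pow(\bar G)$ consists of odd-order elements (since $2$ is isolated) and that the relevant centralizer therefore has order coprime to $|N|$, so the $P_4$ lifts to $G$. You instead verify the centralizer characterization of Lemma~\ref{lem:characterization_via_centralizers} in $\bar G$, using the coprime-action identity $C_{\bar G}(\bar g)=C_G(g)N/N$ together with $C_G(g)\cap N=1$ to transport the centralizer structure from $G$ to $\bar G$. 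Both routes hinge on the same two facts---that $2$ is isolated in $\pi(G)$ (hence in $\pi(\bar G)$) and that $|N|$ is coprime to the odd-order centralizers involved---so they are close in spirit; the paper's route is shorter because Lemma~\ref{lem:lifting_P4} packages the Schur--Zassenhaus lift once and for all, whereas your route makes the coprime-centralizer mechanism explicit and yields slightly more, namely an actual isomorphism $C_{\bar G}(\bar g)\cong C_G(g)$ for every odd-prime-order $\bar g$.
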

\begin{proof}
If $N=1$, then the statement follows from the definition, so assume otherwise.
Combining Lemma~\ref{lem:disconnected_pi} and Lemma~\ref{lem:2_isolated}, we may assume that $G$ is an extension of a nilpotent $\pi_1$-group by a simple group by a $\pi_1$-group, where $\pi_1$ denotes the component containing $2$ in $\pi(G)$. 
We showed earlier that $2$ is isolated in $\pi(G)$ if $\soc(G)$ is a $2$-group, and so $G/\mathcal{O}_2(G)\cong T.H$, where $H$ is a $2$-group and $T$ is simple.

To see that $\pow(G/N)$ is a cograph, recall Lemma \ref{lem:lifting_P4}. Since $2$ is isolated in $\pi(G)$, any induced $P_4$ in $\pow(G/N)$ would contain odd-order elements only and also the relevant centralizer defined in Lemma \ref{lem:lifting_P4} must be of odd order. Thus, all induced $P_4$'s in $\pow(G/N)$ would lift to $G$.
\end{proof}

We refer to Theorem~\ref{thm:finite_simple_po-cographs} once again to deduce that $T$ 
may only be one of $\psl_3(4)$, $\psl_2(q)$ where $q$ is a prime power, or $\sz(q)$ where $q$ is a power of $2$.

\begin{lem}\label{lem: abelian-socle_quotient_by_O2}
	Let $G$ be a non-solvable power-cograph group with abelian socle, then $S:=\soc(G)\cong\mathbb{F}_2^d$ with $d\geq 1$.
	Moreover, $G/\mathcal{O}_2(G)$ has a non-abelian simple socle $T$ and one of the following holds:
	\begin{enumerate}
		\item $T\cong\psl_2(2^n)$ with $n\geq 2$. Then $S=\mathcal{O}_2(G)$ and each minimal normal subgroup of $G$ is isomorphic to the natural $\mathbb{F}_{2^n}[\textsc{SL}_2(2^n)]$-module as a $G/S$-module.
		\item $T\cong\sz(2^{2e+1})$ with $e\geq 2$. Then $S=\mathcal{O}_2(G)$ and each minimal normal subgroup of $G$ is isomorphic to the natural $[\mathbb{F}_{2^{2e+1}}\sz(2^{2e+1})]$-module as a $G/S$-module.
	\end{enumerate}
	In both cases $G$ is a power-cograph group if and only if $T$ is a simple power-cograph group where $2$ is isolated in $\pi(G)$.
\end{lem}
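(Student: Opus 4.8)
The plan is to extract from the hypothesis ``$2$ is isolated in $\pi(G)$'' a strong fixed-point-free condition, feed it into the known structure theory of CIT-groups to pin down the simple section and the module, and then verify the converse directly through the centralizer criterion already established.

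First I would record the starting data. By (the proof of) Lemma~\ref{lem:2_isolated}, $S=\soc(G)\cong\mathbb{F}_2^d$ with $d\geq1$ and $2$ is isolated in $\pi(G)$; in particular $O:=\mathcal{O}_2(G)$ is a nontrivial normal $2$-subgroup containing $S$. The crucial observation is that isolation of $2$ is exactly the CIT-property and translates into a fixed-point statement: if $x\in G$ has odd order $>1$ and centralizes a nontrivial element of the $2$-group $O$, then a suitable power produces an involution $t\in C_G(x)$, so $xt$ has order $2\lvert x\rvert$, contradicting the isolation of $2$. Hence \emph{every nontrivial odd-order element of $G$ acts without nonzero fixed points on $O$} (and on $S$). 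This is precisely the behaviour of the semisimple elements of $\psl_2(2^n)$ and $\sz(2^{2e+1})$ on their natural modules, and it drives the rest of the argument.

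Next I would identify the top quotient. By the preceding lemma, $\bar G:=G/O$ is almost simple and a power-cograph group with $\bar G\cong T.H$ for a simple group $T$ and a $2$-group $H$; since $C_{\bar G}(T)\cap T=1$ has order dividing $|H|$ it is a normal $2$-subgroup, hence trivial as $\mathcal{O}_2(\bar G)=1$, so $T=\soc(\bar G)$ and $\bar G\hookrightarrow\aut(T)$. As $\bar G$ is power-cograph, $T$ is a simple power-cograph group, hence by Theorem~\ref{thm:finite_simple_po-cographs} one of $\psl_3(4)$, $\psl_2(q)$ ($q$ odd), $\psl_2(2^n)$ or $\sz(2^{2e+1})$. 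At this point I would invoke the results of Martineau~\cite{martineau_1972} and Stewart~\cite{stewart_1973} (building on Suzuki) classifying CIT-groups with $\mathcal{O}_2(G)\neq1$ and non-solvable quotient: they force $T\cong\psl_2(2^n)$ ($n\geq2$) or $T\cong\sz(2^{2e+1})$ ($e\geq2$), and show that $O$ is elementary abelian and a direct sum of natural modules. The fixed-point-free property above is what excludes $\psl_3(4)$ and the genuinely odd-characteristic $\psl_2(q)$, whose cross-characteristic $\mathbb{F}_2$-modules always admit odd-order elements with nonzero fixed points. Since $O$ is then a completely reducible $\mathbb{F}_2T$-module, each irreducible summand $W$ is normalized by $O$ (abelian) and by lifts of $T$, hence is a minimal normal subgroup of $G$ isomorphic to the natural module; therefore $O\leq\soc(G)=S\leq O$, giving $S=\mathcal{O}_2(G)$ and the stated module description. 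Finally Lemma~\ref{lemma: almost_simple_to_simple_or_pgl} shows that for these characteristic-$2$ socles the only almost simple power-cograph group is $T$ itself, so $G/\mathcal{O}_2(G)=T$.

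For the characterization it remains to treat the converse inside Cases~1 and~2. If $G$ is power-cograph, then $2$ is isolated by Lemma~\ref{lem:2_isolated} and $T\leq\bar G$ is power-cograph by subgroup-closedness. Conversely, assuming $T$ is power-cograph with $2$ isolated in $\pi(G)$, I would verify the criterion of Lemma~\ref{lem:characterization_via_centralizers} for each element $g$ of prime order: if $g$ is an involution then $C_G(g)$ is a $2$-group, so both conditions hold vacuously; if $g$ has odd prime order then it acts fixed-point-freely on $O$, whence $C_O(g)=1$ and, by coprime action, $C_G(g)\cong C_{\bar G}(gO)$, which satisfies the required conditions because $\bar G$ is power-cograph. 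Thus $\pow(G)$ is a cograph. The main obstacle is the identification step of the previous paragraph: excluding $\psl_3(4)$ and the odd-characteristic $\psl_2(q)$ and proving that $O$ is an elementary abelian natural module. This is the genuinely deep part, where the CIT structure theory of Suzuki, Martineau and Stewart is indispensable; the fixed-point-free observation explains \emph{why} only the defining-characteristic natural modules survive, but upgrading it to the full module classification needs those external results. Everything else—the socle equality, the reduction $\bar G=T$, and the converse via centralizers—is routine once the fixed-point-free property is in hand.
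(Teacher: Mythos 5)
Your proposal mirrors the paper's proof almost step for step: the same reduction to the almost simple quotient $G/\mathcal{O}_2(G)$ via the preceding lemma, the same derivation of fixed-point-freeness of odd-order elements from isolation of $2$, the same appeal to Stewart (for $\psl_2$) and Martineau (for $\sz$) to conclude that $\mathcal{O}_2(G)$ is elementary abelian and a direct sum of natural modules, and the same use of the almost-simple classification to force $G/\mathcal{O}_2(G)=T$. The one genuine gap is your exclusion of $T\cong\psl_3(4)$. Stewart's propositions concern quotients with socle $\psl_2(q)$ and Martineau's concern $\sz(q)$; neither says anything about $\psl_3(4)$, which is itself a simple CIT (indeed EPPO) power-cograph group, so ``CIT structure theory'' alone does not remove it. Your fallback justification --- that its ``cross-characteristic $\mathbb{F}_2$-modules'' always admit odd-order elements with nonzero fixed points --- is both mislabeled (characteristic $2$ \emph{is} the defining characteristic of $\psl_3(4)$, so these are defining-characteristic modules) and unproven. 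The paper closes exactly this case by a GAP computation over all irreducible $\mathbb{F}_2$-representations of $\psl_3(4)$, exhibiting odd-order elements with fixed points in each; some such verification (computational, or by hand, e.g.\ order-$5$ elements have eigenvalue $1$ on the natural $3$-dimensional module, and the remaining irreducibles must be checked too) has to be supplied.

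A second, less serious divergence is in the converse. You assume ``$2$ is isolated in $\pi(G)$'' (legitimately, since it is part of the right-hand side of the stated equivalence) and transfer the criterion of Lemma~\ref{lem:characterization_via_centralizers} from $T$ to $G$ via the coprime-action isomorphism $C_G(g)\cong C_{G/\mathcal{O}_2(G)}(gO)$ for $g$ of odd prime order; this is correct and shorter than the paper's argument. The paper instead proves, by explicit eigenvalue computations in $\spl_2(2^n)$ and in the natural $4$-dimensional representation of $\sz(q)$, that elements of odd prime order never fix nonzero vectors of the natural modules, so that isolation of $2$ in $\pi(G)$ is \emph{automatic} given the module structure. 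That stronger fact is what Theorem~\ref{mainthm2} (cases 3 and 4, whose hypotheses mention no isolation condition) actually relies on; with only your version of the converse, the lemma as you prove it would leave a gap in the derivation of the main theorem, since one would still need to show that the stated module structure forces $2$ to be isolated.
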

\begin{proof}
		By the previous lemma and Lemma~\ref{lem: almost_simple_or_abelian_socle}, $G/\mathcal{O}_2(G)$ has a non-abelian simple socle whose power graph is a cograph. According to \cite[Theorem 1.3]{cameron2021finite}, $T:=\soc(G/\mathcal{O}_2(G))$ is either $\psl_3(4)$, $\psl_2(q)$ or $\sz(q)$ for appropriate values of $q$. We may further assume that $2$ is isolated in $\pi(G)$ by Lemma \ref{lem:2_isolated} and in particular, all odd order elements in $G$ must act on $S$ fixed-point-freely.
		
		We checked with GAP~\cite{GAP4} that we can exclude $\psl_3(4)$ by explicitly computing odd order elements with fixed-points in all irreducible representations of $\psl_3(4)$ over $\mathbb{F}_2$.	
		
		We can exclude $\psl_2(q)$ for $q>5$ and odd via~\cite[Proposition 3.2]{stewart_1973}. Recall that $\psl_2(5)\cong\psl_2(4)$, so this leaves us with $\psl_2(q)$ and $\sz(q)$ for $q$ even.

		Furthermore, by~\cite[Proposition 4.1]{stewart_1973} and~\cite[Theorem, Remark 1]{martineau_1972}, 
		it must be the case that $\mathcal{O}_2(G)$ is elementary abelian and forms a direct sum of copies of the natural SL$_2(q)$-module or the natural $\sz(q)$-module, respectively.
		
		We now prove that under these assumptions, $H:=S.T\leq G$ is a power-cograph group whenever $T$ is. The general approach is to apply Lemma~\ref{lem:characterization_via_centralizers}.
		
		Consider commuting elements of distinct prime orders in $H$, say $x$ and $y$ . Then 
		$C_H(x)S/S$ is either a $p$-group or isomorphic to $C_p\times C_q$ for appropriate primes $p$ and $q$, since this is the case for $T$ whenever $T$ is a power-cograph group, see Theorem~\ref{thm:finite_simple_po-cographs}. In the first case, one of $x$ and $y$ has order $2$ and since $2$ is isolated in $T$ the respective element, $y$ say, must be contained in $S$.

		In the second case, $C_H(x)$ is either $C_p\times C_q$ and thus admissible according to Lemma \ref{lem:characterization_via_centralizers}, or there is some non-trivial element in $S$ that commutes with an element of order $p\neq 2$. In any case, the only obstruction to $C_H(x)$ being admissible is the existence of an element $t\in T$ of prime order $p\neq 2$, such that $C_S(t)\neq\{1\}$ holds. We argue that such elements cannot exist.
		
		Let $q:=2^n$ and let $A\in\textsc{SL}_2(q)$. If $A$ has eigenvalue $1$, then either $A$ is diagonalizable and $\det(A)=1$ forces $A$ to be the identity matrix, or the minimal polynomial of $A$ is $(X-1)^2\equiv_2 X^2-1$ and $A$ is of order $2$.	
		
		If $A\in\sz(q)$, in the natural representation $A$ is a $4$-by-$4$ matrix over $\mathbb{F}_q$. Odd prime divisors of $|\sz(q)|$ divide $q-1$ or $q^2+1$.
		If $A$ has an order dividing $q^2+1$ then $A$ cannot fix any non-zero vector in the natural representation, or otherwise the order of $A$ would also divide $q^3|\textsc{GL}_3(q)|=q^6(q-1)^2(q^2-1)(q^2+q+1)$ (this follows since with respect to a suitable basis $A$ would be a product of a block-diagonal matrix with blocks of size 1 and 3 with an upper triangular matrix). But $q^2+1$ is coprime to both $q^2-1$ and $q^2+q+1$ if $q$ is even.	Other elements of odd prime order must have their order divide $q-1$. In the natural representation, these elements are either diagonal 
(and only ever fix a non-zero vector if they are the identity), or they are elements of order $5$ coming from copies of $\sz(2)$ in $\sz(q)$. However, these have irreducible minimal polynomials of degree $4$ over $\mathbb{F}_2$ and in particular they do not admit eigenvalue $1$  over any extension field of $\mathbb{F}_2$.

Finally, note that $G/\mathcal{O}_2(G)$ cannot be a non-simple almost simple group, since otherwise by Lemma~\ref{lem: psl-G-pgl_then_G_psl_or579}, $G/\mathcal{O}_2(G)$ would have to be isomorphic to $\pgl_2(q)$ with $q\in\{5,7,9\}$ and then $2$ is not isolated in $\pi(G)$.
\end{proof}

\section{Proofs of the main theorems}\label{section:summary}

We conclude this work by synthesising our results into proofs of Theorem~\ref{mainthm2} and Theorem ~\ref{mainthm1}.
\begin{proof}[The proof of Theorem~\ref{mainthm2}]
Let $G$ be a finite non-solvable group. Suppose that $G$ is a power-cograph group. We reduce the case to almost simple groups or groups with an abelian socle in Lemma~\ref{lem: almost_simple_or_abelian_socle}. We consider the cases separately. In Lemma~\ref{lemma: almost_simple_to_simple_or_pgl} we proved that in the almost simple case either $G$ is itself simple or isomorphic to $\pgl_2(q)$ where $q$ is a prime power or isomorphic to $M_{10}$. If $G$ is a simple group,  then~\cite[Theorem 1.3]{cameron2021finite} holds. Further, we showed in Lemma~\ref{lem: psl-G-pgl_then_G_psl_or579} that if $G$ lies between $\psl_2(q)$ and $\pgl_2(q)$  then either $G$ is $\psl_2(q)$ or one of $\pgl_2(5)$, $\pgl_2(7)$ or $\pgl_2(9)$. This settles parts (1) and (2) of Theorem~\ref{mainthm2}. In the abelian socle case, we proved that $\pi(G)$ is always disconnected in Lemma~\ref{lem:2_isolated}. Finally, in Lemma~\ref{lem: abelian-socle_quotient_by_O2} we established Parts (3) and (4) of Theorem~\ref{mainthm2} by showing that the quotient space $G/\soc(G)$ is isomorphic to either $\psl_2(2^n)$ with $n\geq 2$ or $\sz(2^{2e+1})$ with $e\geq 2$ and is itself a power-cograph group. For the other direction, Lemma~\ref{lem: psl-G-pgl_then_G_psl_or579} and Lemma~\ref{lem: abelian-socle_quotient_by_O2} also show that if $G$ is one of the groups in Parts (1--4) then $G$ is a power-cograph group.
\end{proof}

\begin{proof}[The proof of Theorem~\ref{mainthm1}]
	Let $G$ be a finite solvable group which is a power-cograph group. If $\pi(G)$ is connected, then it is a $P_3$ by Lemma~\ref{lem:form_of_solvable_pi}, and Lemma~\ref{lem:classification_connected_pi} proves that one of Parts (1) or (2) of Theorem ~\ref{mainthm1} holds. Otherwise, if $\pi(G)$ is disconnected, then by Lemma~\ref{lem:form_of_solvable_pi} it is an induced subgraph of $2K_2$ and Lemma~\ref{lem: classification_disconnected_pi} shows the rest of Theorem~\ref{mainthm1} holds. 
	For the other direction, let $G$ be one of the groups in Parts (1--5). Then $G$ is a power-cograph group as shown in Lemma~\ref{lem:classification_connected_pi} and Lemma~\ref{lem: classification_disconnected_pi}.
\end{proof}

\section{Acknowledgements}

We thank an anonymous referee for pointing out a gap in a previous proof of Lemma~\ref{lemma: almost_simple_to_simple_or_pgl} which led to the inclusion of the group $M_{10}$ into Theorem~\ref{mainthm2}.

%\nocite{*}
\bibliographystyle{plain}
\bibliography{refs2}

\end{document}